\documentclass[Threeside,11pt]{article}
\usepackage{etoolbox}
\makeatletter
\patchcmd{\thebibliography}
{\list}
{\small          
	\setstretch{1.2}
	\list}
{}{}
\makeatother
\usepackage{setspace}
\usepackage{tikz}
\usepackage{float}
\definecolor{myPurple}{RGB}{109,90,207}
\definecolor{myGreen}{RGB}{4,130,67}
\definecolor{myGold}{RGB}{253,177,71}
\definecolor{myBurgundy}{RGB}{63,1,44}
\definecolor{myTeal}{RGB}{8,119,127}
\definecolor{myCream}{RGB}{255,216,177}
\definecolor{myOrange}{RGB}{225,119,1}

\usetikzlibrary{arrows.meta,bending,calc}
\tikzset{
	ball3Donly/.style={
		circle, minimum size=1.4cm,
		shading=ball,
		ball color=myPurple,
		draw=myPurple
	}
}
\pgfmathsetmacro{\wNear}{2.0pt}
\pgfmathsetmacro{\wMid}{1.3pt}
\pgfmathsetmacro{\wFar}{0.7pt}
\pgfmathsetmacro{\bend}{6mm}
\usepackage [latin1]{inputenc}
\usepackage[english]{babel}
\usepackage{amsmath}
\usepackage{amsthm}
\usepackage{algorithm}
\usepackage{cite}
\usepackage{amsfonts}
\usepackage{amssymb}
\usepackage{mathrsfs}
\usepackage{graphicx}
\usepackage{colortbl,dcolumn}
\usepackage{marvosym}
\usepackage{ifsym}
\usepackage{xcolor}
\usepackage{enumitem}
\allowdisplaybreaks
\usepackage[
colorlinks,
citecolor=blue,
linkcolor=blue]{hyperref} 
       \textheight=215truemm
%
        \textwidth=165truemm

 \hoffset=0truemm
 \voffset=0truemm

 \topmargin=0truemm
       \oddsidemargin=0truemm
       \evensidemargin=0truemm

       \newtheorem{lemma}{\bf Lemma}[section]
       \newtheorem{theorem}{\bf Theorem}[section]

       \newtheorem{definition}{\bf Definition}[section]
       \newtheorem{remark}{\bf Remark}[section]

       \numberwithin{equation}{section}

\begin{document}
\title{{\sl An infinite-dimensional Kolmogorov theorem and the construction of almost periodic breathers}}

\author{Zhicheng Tong\thanks{School of Mathematics, Jilin University, Changchun 130012, P. R.  China. Email: \url{tongzc25@jlu.edu.cn}} 
	\and 
	Yong Li\thanks{Corresponding author. School of Mathematics, Jilin University, Changchun 130012, P. R.  China;   Center for Mathematics and Interdisciplinary Sciences,  Northeast  Normal  University, Changchun 130024, P. R. China. Email: \url{liyong@jlu.edu.cn}}
}

\date{}
\maketitle

\begin{abstract}
In this paper, we present two infinite-dimensional Kolmogorov theorems based on non-resonant frequencies of Bourgain's Diophantine type or even weaker conditions. To be more precise, under a Legendre-type nondegeneracy condition for an infinite-dimensional Hamiltonian system, we prove the persistence of a full-dimensional KAM torus with a universally prescribed frequency independent of any spectral asymptotics. As an application, we prove that for a class of perturbed networks with weakly coupled oscillators described by
\[\frac{{{{\rm d}^2}{x_n}}}{{{\rm d}{t^2}}} + V'\left( {{x_n}} \right) = \varepsilon_n  {W'\left( {{x_{n + 1}} - {x_n}} \right) - \varepsilon_{n-1}W'\left( {{x_n} - {x_{n - 1}}} \right)} ,\quad n \in \mathbb{Z},\]
or even for more general perturbed networks, frequency-preserving almost periodic breathers do persist, provided that the local potential $ V $ and the coupling potential $ W $ satisfy certain assumptions. In particular, this yields the first frequency-preserving result for the Aubry--MacKay conjecture \cite{MR1304442,MR1353962}.
\\
\\
{\bf Keywords:} {Infinite-dimensional Hamiltonian network, full-dimensional KAM tori, Bourgain's Diophantine non-resonance condition, frequency-preserving almost periodic breathers, the Aubry--MacKay conjecture}\vspace{2mm}
\\
{\bf2020 Mathematics Subject Classification:} {37K60, 82C20, 37L60, 37K55, 37C15}
\end{abstract}

\tableofcontents

\section{Introduction}\label{SEC1}
\setcounter{footnote}{0}
\renewcommand{\thefootnote}{{\arabic{footnote}}}
This paper presents an infinite-dimensional version of the  Kolmogorov theorem, demonstrating the persistence of a full-dimensional invariant torus in an infinite-dimensional perturbed Hamiltonian system under a Legendre-type nondegeneracy condition. The surviving torus preserves its original frequency of Bourgain's Diophantine type without requiring any spectral asymptotics. Using this result, we make a novel contribution to the Aubry--MacKay conjecture  \cite{MR1304442,MR1353962}. We prove that, for certain weakly coupled infinite-dimensional networks (lattice systems), frequency-preserving almost periodic breathers do persist under perturbations. For the reader's convenience, we first summarize our two main results below. Detailed quantitative formulations and historical context for the second result will be presented in subsequent sections.\vspace{2mm}

\noindent\textbf{[Main Result I]} \textbf{(Theorems \ref{FULLT1},  \ref{FULLT2} and \ref{FULLT3}: Infinite-dimensional Kolmogorov theorem)} Consider an infinite-dimensional, analytic, nondegenerate, and non-integrable Hamiltonian system. For frequencies of Bourgain's Diophantine type (Definition \ref{FULLDio}) or even weaker (Definition \ref{weakdio}), the unperturbed full-dimensional tori with such frequencies survive small perturbations. They undergo only slight deformations while preserving their original frequencies.
\vspace{3mm}

\noindent\textbf{[Main Result II]} \textbf{(Theorems \ref{MACDINGLI} and \ref{TH22}: Persistence of frequency-preserving almost periodic breathers)} 
For a class of perturbed networks with weakly coupled oscillators:
\begin{equation}\notag
	\frac{{{{\rm d}^2}{x_n}}}{{{\rm d}{t^2}}} + V'\left( {{x_n}} \right) = \varepsilon_n  {W'\left( {{x_{n + 1}} - {x_n}} \right) - \varepsilon_{n-1}W'\left( {{x_n} - {x_{n - 1}}} \right)} ,\quad n \in \mathbb{Z},
\end{equation}
or even for more general perturbed networks:
\[	\frac{{{{\rm d}^2}{x_n}}}{{{\rm d}{t^2}}} + V'\left( x_n \right) = \sum\limits_{p =  - {m_{1,n}}}^{ - 1} {{\varepsilon _{n,p}}{W_{n,p}^\prime} \left( {{x_{n + p}} - {x_n}} \right)}  - \sum\limits_{p=1}^{{m_{2,n}}} {{\varepsilon _{n,p}}{W_{n,p}^\prime} \left( {{x_{n + p}} - {x_n}} \right)} ,\quad n \in \mathbb{Z},\]
frequency-preserving almost periodic breathers do persist, provided that these systems satisfy certain assumptions. Moreover, these frequencies are of Bourgain's Diophantine type (see Definition \ref{FULLDio}).
\vspace{2mm}

Next, we briefly outline the \textbf{primary motivations} and our \textbf{main contributions}. It is well known that constructing full-dimensional invariant tori for infinite-dimensional Hamiltonian systems is a challenging problem. Existing results are almost exclusively of Arnold type, which allow for frequency-drifting and focus primarily on measure estimates. From the viewpoint of dynamical systems, a central question is the extent to which the original dynamics persists under small perturbations. In this context, the persistence of invariant tori is of fundamental importance. Equally important is the \textit{preservation of frequencies}. Consequently, in finite dimensions, the Kolmogorov  theorem concerning  frequency preservation has been extensively studied. In stark contrast, rigorous statements of such results appear to be entirely absent in the infinite-dimensional setting. It is worth noting that, for a frequency prescribed universally (in the measure-theoretic sense), it is \textit{preferable} to use  the infinite-dimensional Diophantine condition introduced by Bourgain \cite{MR2180074};  see Corsi \textit{et al.} \cite{MR4781767}   for details.   Thus, achieving such a Kolmogorov-type result with Bourgain's non-resonance condition (or even weaker) is of independent interest, which is one of the primary motivations for this paper. On the other hand, from the perspective of physical applications, many nonlinear models are inherently nondegenerate. Thus, intuitively, one might  expect that these systems may preserve more of the unperturbed dynamics under perturbation, such as breathers and their frequencies. This is indeed a \textit{novel} finding and constitutes another primary motivation for our work. This was previously unknown perhaps due to the lack of an appropriate infinite-dimensional Kolmogorov theorem based on Bourgain's Diophantine non-resonance condition. Furthermore, when the frequency is preserved, there is \textit{no need} to assume any spectral asymptotics (e.g., as in Bourgain  \cite{MR2180074}  and Kuksin--P\"oschel  \cite{MR1370761}). Instead, it is only necessary to ensure certain  well-definedness, which is highly desirable from an applied perspective, at least in \textit{networks (lattice systems)}.

It is worth noting that preserving the prescribed non-resonant frequency associated with the full-dimensional torus is a challenging task that necessitates certain nondegeneracy. Otherwise, the prescribed frequency drifts. In contrast to the truncation method commonly used in KAM theory, we employ the generating function approach based on the nondegeneracy of the Hamiltonian system. This ensures that the frequency remains unchanged throughout the KAM iteration process, in the spirit of Kolmogorov \cite{MR0068687} and Salamon \cite{MR2111297}.
Furthermore, achieving a balance among the spatial structure, non-resonance condition, and regularity presents additional challenges in validating the KAM scheme.

The remainder of this paper is organized as follows. In Section \ref{SEC2}, we first provide some basic characterizations of the spatial structure, non-resonance, and regularity. Based on these, we present two theorems corresponding to Main Result I: Theorem \ref{FULLT1} on Bourgain's Diophantine non-resonance condition and Theorem  \ref{FULLT2} on the weak Diophantine non-resonance condition. Subsequently, we provide a discussion of the spatial structure and present a more general Theorem \ref{FULLT3} without proof. In Section \ref{SECMACKAY}, we utilize  Main Result I to prove Main Result II, namely the persistence of frequency-preserving almost periodic breathers for certain weakly coupled networks. Then, for the reader's convenience, a more detailed comparison of our main results with pertinent previous literature is presented in Section \ref{SECNEW}. Finally, the proofs of Theorems \ref{FULLT1} and \ref{FULLT2} are postponed to Section \ref{SEC4}, utilizing an improved infinite-dimensional KAM technique.

\section{Persistence of  full-dimensional frequency-preserving tori}\label{SEC2}

\subsection{Infinite-dimensional non-resonance condition, spatial structure and norms}
Before presenting  our KAM results, let us first recall the definition of the 	infinite-dimensional  Diophantine frequency introduced by Bourgain \cite{MR2180074}. For more details,  see, e.g., Biasco \textit{et al.} \cite{MR4091501} and Montalto--Procesi \cite{MR4201442}.
\begin{definition}[Infinite-dimensional Diophantine frequency]\label{FULLDio}
	For $ 0<\gamma<1 $ and $ \mu > 1 $, the infinite-dimensional Diophantine non-resonance condition for $ \omega  \in {{\mathbb{R}}^\mathbb{Z}} $ is defined as
	\begin{equation}\notag 
		\left| {\omega  \cdot \ell } \right| > \gamma \prod\limits_{j \in \mathbb{Z}} {\frac{1}{{\left( {1 + {{\left| {{\ell _j}} \right|}^\mu }{{\left\langle j \right\rangle }^\mu }} \right)}}} ,\quad  \text{$ \forall \ell  \in {\mathbb{Z}^\mathbb{Z}} $ with $ 0 < \sum\limits_{j \in \mathbb{Z}} {\left| {{\ell _j}} \right|}  <  + \infty $,} 
	\end{equation}
	where  $ \left\langle j \right\rangle : = \max \left\{ {1,\left| j \right|} \right\} $ for $ j \in \mathbb{Z} $.
\end{definition}

To show the universality of the  infinite-dimensional Diophantine non-resonance condition, let us consider the Diophantine set $ {\mathcal{D}_{\gamma ,\mu }} $ defined by 
\[{\mathcal{D}_{\gamma ,\mu }}: = \left\{ {\omega  \in {{\left[ {1,2} \right]}^\mathbb{Z}}:\quad \text{$ \omega $ is an infinite-dimensional Diophantine frequency as in Definition \ref{FULLDio}}} \right\}.\]
Fortunately, as pointed out by Bourgain \cite{MR2180074} and also by Biasco \textit{et al.} \cite{MR4091501}, one has the following estimate: there exists a positive constant $ C(\mu) $  depending only on $ \mu $, such that 
\[\mathbb{P}\left( {{{\left[ {1,2} \right]}^\mathbb{Z}}\setminus {\mathcal{D}_{\gamma ,\mu }}} \right) \leqslant C\left( \mu  \right)\gamma= \mathcal{O}\left( \gamma  \right),\quad \text{as $  \gamma \to 0^+ $,}\]
where $\mathbb{P}$ is the product measure on $[1,2]^\mathbb{Z}$.
Therefore, the Diophantine non-resonance condition is indeed \textit{universal} (or \textit{typical}) in a measure-theoretical sense, i.e., $ \mathbb{P} ( {{{ [ {1,2}  ]}^\mathbb{Z}}\setminus \bigcup\nolimits_{0 < \gamma  < 1} {{\mathcal{D}_{\gamma ,\mu }}} }  ) = 0 $.  Nevertheless, throughout the present paper, we do not require the  range restriction $ {{\left[ {1,2} \right]}^\mathbb{Z}} $ of $\omega$, and we may simply assume $\omega\in \mathbb{R}^\mathbb{Z}$ (also in the definition of   $ {\mathcal{D}_{\gamma ,\mu }} $)\footnote{In this context, when referring to a measure of frequencies rather than an individual frequency, one considers probability measures supported on bounded subsets of $\mathbb{R}^{\mathbb{Z}}$.}.

We now introduce some basic notation for the relevant function spaces.
 	For $ \sigma>0 $ and $ \eta \geqslant 2 $,  define the infinite-dimensional \textit{thickened  torus} as
\begin{equation}\label{FUUHUAN}
	\mathbb{T}_\sigma ^\infty : = \left\{ {x = {{({x_j})}_{j \in \mathbb{Z}}}:\quad {x_j} \in \mathbb{C},\;\operatorname{Re} {x_j} \in \mathbb{T},\;\left| {\operatorname{Im} {x_j}} \right| \leqslant \sigma {{\left\langle j \right\rangle }^\eta },\; j \in \mathbb{Z}} \right\}.
\end{equation}
Based on the thickened torus $ \mathbb{T}_\sigma ^\infty $, we define the space
of analytic functions $u: \mathbb{T}_\sigma ^\infty  \to \mathbb{C} $ as 
\begin{equation}\label{FUUJIEXI}
	\mathcal{G}\left( {\mathbb{T}_\sigma ^\infty } \right): = \left\{ {u (x) = \sum\limits_{\ell  \in \mathbb{Z}_ * ^\infty } {\widehat u_{\ell}{{\rm e}^{{\rm i}\left\langle {\ell ,x} \right\rangle  }}} :\quad \|u\|_{\sigma }: = \sum\limits_{\ell  \in \mathbb{Z}_ * ^\infty } {\left| {\widehat u_{\ell}} \right|{{\rm e}^{\sigma {{\left| \ell  \right|}_\eta }}}}  <  + \infty } \right\},
\end{equation}
endowed with the set of infinite integer vectors with finite support: 
\[\mathbb{Z}_ * ^\infty : = \left\{ {\ell  \in {\mathbb{Z}^\mathbb{Z}}:\quad {{\left| \ell  \right|}_\eta }: = \sum\limits_{j \in \mathbb{Z}} {{{\left\langle j \right\rangle }^\eta }\left| {{\ell _j}} \right|}  <  + \infty } \right\}.\]
Observe that for $ u \in {\mathcal{G}}\left( {\mathbb{T}_\sigma ^\infty } \right) $, the radius of analyticity of each angular $ x_j $ increases as $ |j| \to +\infty $.  Such a spatial structure  effectively ensures  the feasibility of Fourier analysis in the infinite-dimensional context.
We also denote  by $ {\mathcal{G}_0}\left( {\mathbb{T}_\sigma ^\infty } \right) $ the space of analytic functions with vanishing zeroth Fourier constants:
\[{\mathcal{G}_0}\left( {\mathbb{T}_\sigma ^\infty } \right): = \left\{ {u:\quad u \in \mathcal{G}\left( {\mathbb{T}_\sigma ^\infty } \right),\;\widehat u_0 = 0} \right\}.\]
Moreover, for $ \varsigma>0 $, denote by
\[{\mathscr{D}_r}: = \left\{ {{{\left( {{y_j}} \right)}_{j \in \mathbb{Z}}}:\quad  y_j \in \mathbb{C}, \;{\left\| y \right\|^ *_\varsigma }: = \sum\limits_{j \in \mathbb{Z}} {\left| {{y_j}} \right|{{\left\langle j \right\rangle }^{\varsigma}}}<r } \right\}\]
the complex neighborhood of $ 0 \in {\mathbb{C}^\mathbb{Z}} $, and denote by
\[{\mathscr{D}_{\sigma ,r}}: = \mathbb{T}_\sigma ^\infty  \times {\mathscr{D}_r} \subseteq \left({\mathbb{C}^\mathbb{Z}}/(2\pi {\mathbb{Z}^\mathbb{Z}})\right) \times {\mathbb{C}^\mathbb{Z}}\]
the complex neighborhood of $ {\mathbb{T}^\infty } \times \left\{ 0 \right\} $. Hereafter, we identify ${\mathbb{T}^\infty}$ with ${\mathbb{T}^\mathbb{Z}}$. For an analytic function $ u = u\left( {x,y} \right) $  on $ \mathscr{D}_{\sigma ,r} $, we define  its weighted norm as
\[\left\|u\left( {x,y} \right)\right\|_{{\sigma ,r}}: = \sum\limits_{\ell  \in \mathbb{Z}_ * ^\infty } {\mathop {\sup }\limits_{y \in {\mathscr{D}_{r }}} \left| {{u_\ell }} \right|{{\rm e}^{\sigma {{\left| \ell  \right|}_\eta }}}} ,\]
where $u_\ell = \widehat u_\ell(y)$ are the Fourier coefficients of $u$ with respect to $x$ and are functions of $y$.
Similarly,	for a matrix valued  function $ \mathscr{A}\left( {x,y} \right) = {\left( {{\mathscr{A}^{\left( {i,j} \right)}}\left( {x,y} \right)} \right)_{i,j \in \mathbb{Z}}} $ with $ \mathscr{A}^{\left( {i,j} \right)}\left( {x,y} \right) \in \mathbb{C} $ on $ \mathscr{D}_{\sigma,r} $, we define its weighted norms as
\begin{equation}\notag 
	\left\|\mathscr{A}\left( {x,y} \right)\right\|_{\sigma }: = \mathop {\sup }\limits_{i,j\in \mathbb{Z}} \left\|{\mathscr{A}^{\left( {i,j} \right)}}\left( {x,y} \right)\right\|_{\sigma },\quad \left\|\mathscr{A}\left( {x,y} \right)\right\|_{{\sigma ,r}}: = \mathop {\sup }\limits_{i,j\in \mathbb{Z}} \left\|\mathop {\sup }\limits_{y \in \mathscr{D}_{r}} \left| {{\mathscr{A}^{\left( {i,j} \right)}}\left( {x,y} \right)} \right|\right\|_{\sigma }.
\end{equation}
Let $\mathscr{I}_\sigma^\infty$ denote the set of all invertible operators $\mathscr{B}: \mathbb{T}_\sigma^\infty \to \mathbb{T}_\sigma^\infty$, where the norm on $\mathbb{T}_\sigma^\infty$ is defined as in \eqref{FUUJIEXI}. For simplicity, we write the operator norm of $\mathscr{B}$ as $\|\mathscr{B}\|_{\mathscr{I}_\sigma^\infty}:={\sup _{{{\left\| u \right\|}_\sigma } = 1}}{\left\| {{\mathscr{B}}u} \right\|_\sigma }$. 
In the subsequent analysis, we denote $  \langle { \cdot , \cdot }  \rangle $ as the standard inner product in the infinite-dimensional setting. It can be readily verified that any instance of $  \langle { \cdot , \cdot }  \rangle $ that appears in this paper is well-defined under the specific spatial structure considered.

\subsection{KAM with  infinite-dimensional Diophantine frequency-preserving}
With the previous preparation, we are now able to present for infinite-dimensional Hamiltonian systems the following KAM persistence theorem regarding full-dimensional frequency-preserving  invariant tori.

\begin{theorem}[KAM via the infinite-dimensional Diophantine non-resonance condition]\label{FULLT1}
	Let $ 0<\sigma<1$ and $ \eta \geqslant 2 $. Assume that the  frequency $ \omega  \in {{\mathbb{R}}^\mathbb{Z}} $ satisfies the infinite-dimensional Diophantine condition in Definition \ref{FULLDio}. Additionally, 	assume that $ \mathscr{H}(x,y) $ is a real analytic Hamiltonian function defined on $ {\mathscr{D}_{\sigma ,\sigma }} $, which is $ 1 $-periodic in the variables $ (x_j)_{j \in \mathbb{Z}} $, and satisfies
	\begin{align}
		\label{FULL1}\left\|\mathscr{H}\left( {x,0} \right) - \int_{{\mathbb{T}^\infty }} {\mathscr{H}\left( {\xi ,0} \right){\rm{d}}\xi } \right\|_{\sigma } &\leqslant {{\rm e}^{ - K}},\\
		\label{FULL2}\left\|{\mathscr{H}_y}\left( {x,0} \right) - \omega \right\|_{\sigma } &\leqslant {{\rm e}^{ - K{{ {\sigma } }^{ - \frac{2}{\eta} }}}},\\
		\label{FULL3}\left\|{\mathscr{H}_{yy}}\left( {x,y} \right) - \mathscr{Q}\left( {x,y} \right)\right\|_{{\sigma ,\sigma }} &\leqslant \sigma^{-1}{{\rm e}^{ - K}},
	\end{align}
	where $ K > 0 $ is a sufficiently large constant independent of $ \sigma $, and $ \mathscr{Q}\left( {x,y} \right) \in {\mathbb{C}^{\mathbb{Z} \times \mathbb{Z}}} $ is a Hermitian  and analytic matrix-valued function on $ {{\mathscr{D}}_{\sigma ,\sigma }} $ satisfying
	\begin{equation}\label{FULLninininini}
		\left\|\mathscr{Q}\left( {x,y} \right)\right\|_{{\sigma ,\sigma }} \leqslant M,
		\quad \left\|{\left( {\int_{{\mathbb{T}^\infty }} {\mathscr{Q}\left( {x,0} \right){\rm d}x} } \right)^{ - 1}}\right\|_{{\mathscr{I}_\sigma^\infty}} \leqslant M,
	\end{equation}
	with $ M>0 $ independent of $ \sigma $\footnote{The second condition in \eqref{FULLninininini} is commonly referred to as the Legendre-type nondegeneracy condition in the finite-dimensional setting; hence, we adopt this terminology in the infinite-dimensional case as well.}.
	
	 Under these assumptions,  there exists a real analytic symplectic transformation $ z = \phi \left( \zeta  \right) $ of the form
	\begin{equation}\notag
		z = \left( {x,y} \right),\quad \zeta  = \left( {\xi ,\kappa } \right),\quad x = u\left( \xi  \right),\quad y = v\left( \xi  \right) + \left\langle u_\xi {\left( \xi  \right)^{ - 1}},\kappa \right\rangle,
	\end{equation}
	mapping $ {\mathscr{D}_{\sigma /4,\sigma /4}} $ into $ {\mathscr{D}_{\sigma ,\sigma }} $, such that $ u\left( \xi  \right) - {\rm id}  $ and $ v\left( \xi  \right) $ are  $1$-periodic in all variables, and the Hamiltonian function $ \mathscr{W} = \mathscr{H} \circ \phi  $ satisfies
	\begin{equation}\notag
		{\mathscr{W}_\xi }\left( {\xi ,0} \right) = 0,\quad {\mathscr{W}_\kappa }\left( {\xi ,0} \right) = \omega .
	\end{equation}
	Moreover, the transformation $ \phi $ and the Hamiltonian function $ \mathscr{W} $ satisfy the following  estimates:
	\[\left\|\phi \left( \zeta  \right) - {\rm id} \right\|_{{\frac{\sigma}{4} ,\frac{\sigma}{4} }} \leqslant \frac{2{\sigma ^{\frac{2}{\eta} }}}{K},\quad \left\|{\phi _\zeta }\left( \zeta  \right) - {\rm Id}\right\|_{{\frac{\sigma}{4} ,\frac{\sigma}{4} }} \leqslant  \frac{8{\sigma ^{\frac{2}{\eta}-1 }}}{{ K}},\]
	and
	\[\left\|{\mathscr{W}_{\kappa \kappa }}\left( \zeta \right) - \mathscr{Q}\left( \zeta \right)\right\|_{{\frac{\sigma}{4} ,\frac{\sigma}{4} }} \leqslant \frac{2{\sigma ^{\frac{2}{\eta} -1}}}{{ K}}.\]
\end{theorem}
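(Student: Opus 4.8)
The plan is to run a Newton (KAM) iteration à la Kolmogorov--Salamon, at each step constructing a symplectic change of variables generated by a function so that the Hamiltonian moves closer to the Kolmogorov normal form $\mathscr{W}(\xi,\kappa)=\langle\omega,\kappa\rangle+O(\|\kappa\|^2)$, which exactly encodes an invariant torus carrying the prescribed frequency $\omega$. Concretely, writing the error of the current Hamiltonian as small first- and second-order (in $y$) terms together with an $O(\|y\|^2)$ tail, I would seek a generating function of the mixed form $S(\xi,y)=\langle\xi,y\rangle+\langle a,\xi\rangle+\langle b(\xi),y\rangle+s(\xi)$ (so that $x=\xi+S_y$, $\kappa=y+S_\xi$ plus the required shearing $y=v(\xi)+\langle u_\xi(\xi)^{-1},\kappa\rangle$), and determine $a,b,s$ by solving the usual two linear homological equations: one for the $\xi$-dependence of the gradient-in-$y$ term and one for the $\xi$-dependence of the Hamiltonian itself. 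Each homological equation is of the form $\langle\omega,\partial_\xi f\rangle = g - \hat g_0$, solved Fourier-coefficientwise by $\hat f_\ell = \hat g_\ell/(\mathrm{i}\,\omega\cdot\ell)$; the constant $a$ is fixed using invertibility of the averaged Hessian $\big(\int_{\mathbb T^\infty}\mathscr Q\,\mathrm dx\big)^{-1}$ from \eqref{FULLninininini}, which is precisely where Kolmogorov's nondegeneracy is used and which guarantees the frequency does \emph{not} drift.

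The central analytic difficulty — and the part most genuinely infinite-dimensional — is controlling the small divisors and the loss of analyticity width simultaneously. Using Bourgain's Diophantine bound $|\omega\cdot\ell|>\gamma\prod_j(1+|\ell_j|^\mu\langle j\rangle^\mu)^{-1}$, the factor $1/(\omega\cdot\ell)$ contributes a weight growing like $\prod_j(1+|\ell_j|^\mu\langle j\rangle^\mu)$, which must be absorbed into a shrinkage of the domain parameter $\sigma\mapsto\sigma'=\sigma-\delta$. The key technical lemma here is that $\prod_j(1+|\ell_j|^\mu\langle j\rangle^\mu)\le C\exp(c\,\delta\,|\ell|_\eta)$-type estimates hold for suitable $\eta$ (this is exactly why the thickened torus $\mathbb T^\infty_\sigma$ has weights $\langle j\rangle^\eta$ with $\eta\ge 2$ and why $|\ell|_\eta=\sum_j\langle j\rangle^\eta|\ell_j|$), so that the solution of the homological equation satisfies $\|f\|_{\sigma-\delta}\lesssim \gamma^{-1}\delta^{-\tau}\|g\|_\sigma$ for an appropriate exponent; the peculiar exponents $\sigma^{2/\eta}$ and $e^{-K\sigma^{-2/\eta}}$ in the hypotheses \eqref{FULL1}--\eqref{FULL3} are precisely tuned so that the superexponentially-small initial error beats the accumulated divisor losses. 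I would set up the iteration with a geometric sequence of widths $\sigma_m$ converging to $\sigma/4$, error sizes $\epsilon_m$ satisfying $\epsilon_{m+1}\le C\delta_m^{-\tau}\gamma^{-1}\epsilon_m^2$, and verify the quadratic convergence closes given the smallness encoded by $K$ large.

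The remaining steps are the standard but care-demanding bookkeeping: (i) estimating the new Hamiltonian after the change of variables, splitting $\mathscr H\circ\phi_m$ by Taylor expansion in $y$ and using the composition/Cauchy estimates for the weighted norms \eqref{MATIR} — here one must check that the shear $y=v(\xi)+\langle u_\xi(\xi)^{-1},\kappa\rangle$ is well-defined, i.e.\ that $u_\xi(\xi)$ lies in $\mathscr I^\infty_\sigma$, which follows because $u-\mathrm{id}$ is small in operator norm so a Neumann series converges; (ii) propagating the Hessian control, i.e.\ showing $\mathscr W_{\kappa\kappa}$ stays $M$-close to $\mathscr Q$ and its average stays invertible, so that the nondegeneracy needed at the next step persists — this uses \eqref{FULL3} and the smallness of the generating function's second derivatives; (iii) taking the limit $\phi=\lim\phi_1\circ\cdots\circ\phi_m$, showing it converges on $\mathscr D_{\sigma/4,\sigma/4}$ with the claimed quantitative bounds $\|\phi-\mathrm{id}\|_{\sigma/4,\sigma/4}\le 2\sigma^{2/\eta}/K$ etc., and that the limit Hamiltonian is in Kolmogorov normal form, yielding $\mathscr W_\xi(\xi,0)=0$, $\mathscr W_\kappa(\xi,0)=\omega$. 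I expect the main obstacle to be step (i) combined with the width-loss accounting of the second paragraph: keeping all three hypotheses \eqref{FULL1}--\eqref{FULL3} in their precisely-weighted forms invariant under the iteration, while ensuring the product-type small-divisor weights are genuinely dominated by the exponential analyticity weights for $\eta\ge 2$, is where the infinite-dimensionality bites and where the delicate balance among spatial structure, nonresonance, and regularity referred to in the introduction must be made to work.
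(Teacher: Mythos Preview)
Your overall architecture is correct and matches the paper's: Kolmogorov--Salamon generating functions, homological equations solved Fourier-coefficientwise, the constant shift fixed via the invertible averaged Hessian to freeze the frequency, quadratic iteration, and convergence of the composed maps. The bookkeeping items (i)--(iii) you list are exactly what the paper carries out.

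There is, however, a concrete technical error in the heart of your second paragraph. You write the small-divisor loss as $\|f\|_{\sigma-\delta}\lesssim\gamma^{-1}\delta^{-\tau}\|g\|_\sigma$, i.e.\ polynomial in $\delta^{-1}$. That is the \emph{finite}-dimensional estimate. With Bourgain's infinite product $\prod_j(1+|\ell_j|^\mu\langle j\rangle^\mu)$ the loss is far worse: the paper's Lemma~\ref{FULLtdfcyl} (from Montalto--Procesi) gives
\[
\|f\|_\sigma \leqslant \exp\!\Big(\tfrac{\tau}{\rho^{1/\eta}}\log\tfrac{\tau}{\rho}\Big)\,\|g\|_{\sigma+\rho},
\]
so the divisor cost is exponential in $\rho^{-1/\eta}$, not polynomial. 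This is exactly why the smallness hypotheses read $e^{-K\sigma^{-2/\eta}}$ rather than a power of $\sigma$, and it forces a nontrivial choice of contraction sequence: the paper takes $\sigma_\nu=\tfrac12\sigma(1+q^\nu)$ with the constraint $2^{-\eta}<q<\tfrac12(2^{-\eta}+1)$, so that the step-$\nu$ divisor loss $\exp(c\,q^{-\nu/\eta}\sigma^{-1/\eta}\cdots)$ grows at a rate $d^\nu$ with $1<d=(q^{1/\eta}-\delta)^{-1}<2$, strictly slower than the Newton doubling $2^\nu$. Only then does the recursion $\varepsilon_{\nu+1}\leqslant e^{\tau^*\sigma^{-2/\eta}d^\nu}\varepsilon_\nu^2$ close to $\varepsilon_\nu\leqslant e^{-2^\nu K\sigma^{-2/\eta}}$. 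A generic geometric sequence of widths will not work; if $q\leqslant 2^{-\eta}$ the divisor growth outruns the quadratic gain and the iteration diverges. This is precisely the ``delicate balance'' you allude to in your last sentence, and it needs to be made explicit rather than assumed to behave like the finite-dimensional case.
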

\begin{remark}
The full-dimensional KAM torus obtained in Theorem \ref{FULLT1} is almost periodic in time and preserves the prescribed frequency $\omega\in\mathbb{R}^{\mathbb{Z}}$ of the unperturbed Hamiltonian system. This differs significantly from the result in \cite[Theorem 3.1]{arXiv:2306.08211}, where the authors investigated the linearization of $C^\infty$ perturbed vector fields over $\mathbb{T}^\infty$ with frequency drifting, assuming the same spatial structure. Moreover, the frequency $ \omega\in \mathbb{R}^{\mathbb{Z}} $ here does not require any spectral asymptotics. For example, this is in contrast to the works of Bourgain  \cite{MR2180074}  and Kuksin--P\"oschel  \cite{MR1370761}. Additionally, the components of the frequency can also tend to infinity.
\end{remark}

\begin{remark}\label{FULLRe1,1}
	  The  unperturbed analytic system might be non-integrable, which is an important distinction from the majority of known results. To be more precise, the coefficient of the quadratic term in $ \mathscr{H}(x,y) $ with respect to $ y $ may depend on $ x $, e.g., 
\[\mathscr{H}(x,y) = \underbrace {\left\langle {\omega ,y} \right\rangle  + \left\langle { \mathscr{C}\left( x \right)y },y \right\rangle }_{\text{the non-integrable unperturbed system}} + \underbrace  {\text{\rm higher-order terms}}_{\text{the perturbation}}\]
for $ x,y $ on suitable weighted spaces.
To ensure  nondegeneracy, a simple and applicable case is the infinite-dimensional diagonal constant matrix $ \mathscr{C}(x)=\mathscr{C}={\rm diag}(\mathscr{C}_j)_{j \in \mathbb{Z}} $, where $ 0<\inf_{j \in \mathbb{Z}}|\mathscr{C}_j|\leqslant \sup_{j \in \mathbb{Z}}|\mathscr{C}_j|<+\infty $. 
	The same holds for Theorem \ref{FULLT2}.
\end{remark}

\subsection{KAM with infinite-dimensional weak Diophantine frequency-preserving}
Since the generating function method is essentially Newtonian,  the resulting convergence rate is typically super-exponential. Building on this  key observation, we are able to establish in this section a weaker form of KAM persistence for infinite-dimensional Hamiltonian systems: full-dimensional KAM tori with \textit{weak} Diophantine non-resonance-preserving. We also mention relevant works on weak Diophantine non-resonance conditions, including those by Brjuno \cite{MR377192}, P\"oschel \cite{MR1037110}, and Corsi \textit{et al.} \cite{MR4781767}, as well as the studies by the authors \cite{arXiv:2306.08211,MR4836959}.

As usual, in infinite-dimensional settings, the small divisors, which are the hardest part to deal with in KAM theory, arise from the following analytic homological equation
\begin{equation}\label{FULLtdfc}
	\omega  \cdot {\partial _x}f = g,\quad x \in \mathbb{T}_\sigma ^\infty,
\end{equation}
where $ f\in  {\mathcal{G}_0}\left( {\mathbb{T}_\sigma ^\infty } \right) $, $ g\in  {\mathcal{G}_0}\left( {\mathbb{T}_{\sigma+\rho} ^\infty } \right) $ with $ \sigma,\rho>0 $, and $ \omega \in \mathbb{R}^{\mathbb{Z}} $ is a fixed non-resonant frequency. To characterize the effect of the non-resonance condition, we introduce the following \textit{control function}.

\begin{definition}[Control function]\label{FULLCONTROL}
	A function $\mathscr{E}:\mathbb{R}^+\to\mathbb{R}^+$ is called a control function if it is monotonically decreasing, continuous, and there exists a positive sequence $\{\delta_m\}_{m\in\mathbb{N}}$ such that
	\begin{equation}\label{FULLweakcon}
		\sum\limits_{m = 0}^\infty  {{\delta _m}}  <  + \infty ,\quad \sum\limits_{m = 0}^\infty  {{{\mathscr{E}}^{ - 1}}\big( {{{\rm e}^{{2^m}{\delta _m}}}} \big)}   <  + \infty.
	\end{equation}
	
\end{definition}

We will explain two boundedness conditions for the control function in the subsequent Comment \ref{com:c1} below Theorem \ref{FULLT2}. As we will see later, using the control function to directly solve the homological equation under a general non-resonance condition beyond the Diophantine type greatly simplifies the analysis of the KAM iteration.

\begin{definition}[Infinite-dimensional weak Diophantine frequency]\label{weakdio}
	A frequency $ \omega \in {\mathbb{R}}^\mathbb{Z} $ is said to satisfy the weak Diophantine condition 	if the unique solution $f\in\mathcal{G}_0(\mathbb{T}_\sigma^\infty)$ of \eqref{FULLtdfc} satisfies the estimate
	\[{\left\| f \right\|_\sigma } \leqslant {\mathscr{E}}\left( \rho  \right){\left\| g \right\|_{\sigma  + \rho }},\]
	where $ {\mathscr{E}} $ is a control function as defined in Definition \ref{FULLCONTROL}, independent of $ f$ and $g $.
\end{definition}

With the above notions, our second main result reads:

\begin{theorem}[KAM via the infinite-dimensional weak  Diophantine non-resonance condition]\label{FULLT2}
Let $ \sigma >0 $ be sufficiently large\footnote{For example, we may choose $ \sigma  \geqslant 64\sum\nolimits_{m = 0}^\infty  {{{\mathscr{E}}^{ - 1}}\left( {{{\rm e}^{{2^m}{\delta _m}}}} \right)} $, where $ {\mathscr{E}} $ and $ \{\delta_m \}_{m \in \mathbb{N}} $ are defined in Definition \ref{weakdio}  for the weak Diophantine frequency $ \omega $;  see Section \ref{FULLSEC4} for details.}, and assume that the  frequency $ \omega  \in {{\mathbb{R}}^\mathbb{Z}} $ satisfies the infinite-dimensional weak Diophantine condition in Definition \ref{weakdio}. Then there exists $ \epsilon^*>0 $ such that the following statements  hold for every $ 0<\epsilon<\epsilon^* $.	Assume that $ \mathscr{H}(x,y) $ is a real analytic Hamiltonian function defined on $ {\mathscr{D}_{\sigma ,\sigma }} $, which is $ 1 $-periodic in the variables $ (x_j)_{j \in \mathbb{Z}} $, and satisfies
	\begin{equation}\notag 
		\left\|\mathscr{H}\left( {x,0} \right) - \int_{{\mathbb{T}^\infty }} {\mathscr{H}\left( {\xi ,0} \right){\rm d}\xi } \right\|_{\sigma },\;	\left\|{\mathscr{H}_y}\left( {x,0} \right) - \omega \right\|_{\sigma },\;	\left\|{\mathscr{H}_{yy}}\left( {x,y} \right) - \mathscr{Q}\left( {x,y} \right)\right\|_{{\sigma ,\sigma }} \leqslant \epsilon,
	\end{equation}
	where $ \mathscr{Q}\left( {x,y} \right) \in {\mathbb{C}^{\mathbb{Z} \times \mathbb{Z}}} $ is a Hermitian  and analytic matrix-valued function on $ {\mathscr{D}_{\sigma ,\sigma }} $ satisfying
	\[\left\|\mathscr{Q}\left( {x,y} \right)\right\|_{{\sigma ,\sigma }} \leqslant M,
	\quad \left\|{\left( {\int_{{\mathbb{T}^\infty }} {\mathscr{Q}\left( {x,0} \right){\rm d}x} } \right)^{ - 1}}\right\|_{{\mathscr{I}_\sigma^\infty}} \leqslant M\]
	with some $ M>0 $. 
	
	Under these assumptions,  there exists a real analytic symplectic transformation $ z = \phi \left( \zeta  \right) $ of the form
	\begin{equation}\notag
		z = \left( {x,y} \right),\quad \zeta  = \left( {\xi ,\kappa } \right),\quad x = u\left( \xi  \right),\quad y = v\left( \xi  \right) + \left\langle u_\xi {\left( \xi  \right)^{ - 1}},\kappa \right\rangle ,
	\end{equation}
	mapping $ {\mathscr{D}_{\sigma /4,\sigma /4}} $ into $ {\mathscr{D}_{\sigma ,\sigma }} $, such that $ u\left( \xi  \right) - {\rm id}  $ and $ v\left( \xi  \right) $ are $1$-periodic in all variables, and the Hamiltonian function $ \mathscr{W} = \mathscr{H} \circ \phi  $ satisfies
	\begin{equation}\notag
		{\mathscr{W}_\xi }\left( {\xi ,0} \right) = 0,\quad {\mathscr{W}_\kappa }\left( {\xi ,0} \right) = \omega .
	\end{equation}
	Moreover, the transformation  $ \phi $ and the Hamiltonian function $ \mathscr{W} $ satisfy the estimates
	\begin{equation}\notag
		\left\|\phi \left( \zeta  \right) - {\rm id} \right\|_{{\frac{\sigma}{4},\frac{\sigma}{4}}} ,\left\|{\phi _\zeta }\left( \zeta  \right) - {\rm Id}\right\|_{{\frac{\sigma}{4},\frac{\sigma}{4}}} ,\left\|{\mathscr{W}_{\kappa \kappa }}\left( \zeta \right) - \mathscr{Q}\left( \zeta \right)\right\|_{{\frac{\sigma}{4},\frac{\sigma}{4}}} \leqslant \epsilon.
	\end{equation}
\end{theorem}

Let us  make some further  comments on Theorem \ref{FULLT2}.

\begin{enumerate}[label=(C\arabic*), ref=(C\arabic*)]
	\item \label{com:c1}  Two boundedness conditions in  \eqref{FULLweakcon}   ensure the existence of the contraction sequence $ \left\{ {{\widetilde{\sigma} _\nu }} \right\}_{\nu  \in \mathbb{N}}  $ in the KAM iteration and the uniform convergence of the KAM error $ \left\{ {{\widetilde{\varepsilon} _\nu }} \right\}_{\nu  \in \mathbb{N}}  $, respectively, see Section \ref{FULLSEC4} for definitions. However, in non-analytic cases (such as Gevrey regularity or lower $C^\infty$ regularity; note that at least $C^\infty$ regularity is required for the infinite-dimensional case, due to counterexamples by Herman \cite{MR0874026}, Cheng--Wang \cite{MR3061774}, and Wang \cite{Wangarxiv}), these boundedness conditions need to change accordingly. We also refer to a completely different technique in \cite{arXiv:2306.08211},  where the authors obtained  equilibrium conditions regarding  regularity and the non-resonance condition without the action variable $ y $, in the sense of preserving full-dimensional invariant tori with \textit{frequency-drifting}.

	\item      By utilizing the weak Diophantine condition, we can still  achieve the frequency-preserving KAM persistence in Theorem \ref{FULLT2}. However, explicitly characterizing the smallness conditions (e.g., $ \epsilon $ in terms of $   \sigma$) may be challenging. This is in contrast to the quantitative estimates provided in Theorem \ref{FULLT1}.
	
		\item  We show that the weak Diophantine condition in Definition \ref{weakdio} is indeed \textit{weaker} than the classical one. Consider the Diophantine case (Definition \ref{FULLDio}) for the homological equation \eqref{FULLtdfc}. With Lemma \ref{FULLtdfcyl}, we obtain 
		\begin{align*}
			\exp \left( {\frac{\tau }{{{\rho ^{\frac{1}{\eta} }}}}\log \left( {\frac{\tau }{\rho }} \right)} \right)   \leqslant \exp \left( {\frac{{\widetilde \tau }}{\rho }} \right) - 1: = {\mathscr{E}}\left( \rho  \right),
		\end{align*}
		provided some $ \widetilde{\tau}>0 $ independent of $ \rho $. Consequently, we have $ {{\mathscr{E}}^{ - 1}}\left( \rho  \right) = {\widetilde \tau }({\log \left( {1 + \rho } \right)})^{-1} $. By choosing $ {\delta _m}: = {2^{ - m}}{m^2} $ with $ m \in \mathbb{N} $, we can verify that $  \sum\nolimits_{m = 0}^\infty  {{\delta _m}}  <  + \infty   $, and
		\[\sum\limits_{m = 0}^\infty  {{{\mathscr{E}}^{ - 1}}\big( {{{\rm e}^{{2^m}{\delta _m}}}} \big)}  = \sum\limits_{m = 0}^\infty  {{{\mathscr{E}}^{ - 1}}\big( {{{\rm e}^{{m^2}}}} \big)}  = \sum\limits_{m = 0}^\infty  {\frac{{\widetilde \tau }}{{\log \left( {1 + {{\rm e}^{{m^2}}}} \right)}}}  \leqslant \widetilde \tau \left( {\frac{1}{{\log 2}} + \sum\limits_{m = 1}^\infty  {\frac{1}{{{m^2}}}} } \right) <  + \infty .\]
		Thus, both conditions in \eqref{FULLweakcon} are satisfied. Therefore, our weak Diophantine non-resonance condition  \textit{covers} the classical Diophantine case.  
		
		\item \label{com:c4}   We further investigate the relationship between arithmetical properties of frequencies and homological equations.	Assume that the non-resonant frequency $ \omega \in \mathbb{R}^\mathbb{Z} $ satisfies 
		\begin{equation}\label{FULLdididi}
			\left| {\left\langle {k,\omega } \right\rangle } \right| > \frac{\gamma }{{{\mathscr{R}} \big( {{{\left| k \right|}_\eta }} \big)}},\quad \gamma>0, \quad \forall 0 \ne k \in \mathbb{Z}_ * ^\infty ,
		\end{equation}
		where $ {\mathscr{R}} :\left[ {1, + \infty } \right) \to {\mathbb{R}^ + } $ is an approximation function, i.e., it is continuous, strictly monotonically  increasing, and tends to positive infinity. We further assume that
		\begin{equation}\label{FULLEEEE}
			\mathop {\sup }\limits_{x \geqslant 1} \left\{{\mathscr{R}} \left( x \right){{\rm e}^{ - \rho x}}\right\} \leqslant {\mathscr{E}}\left( \rho  \right)
		\end{equation}
		with properties of the control function $ {\mathscr{E}}(\rho) $ given in \eqref{FULLweakcon}. Then the frequency $ \omega \in \mathbb{R}^\mathbb{Z} $ above satisfies the weak Diophantine condition, due to the estimate for the unique solution $ f\in  {\mathcal{G}_0}\left( {\mathbb{T}_\sigma ^\infty } \right) $ of the  homological equation \eqref{FULLtdfc}:
		\begin{align}
			{\left\| f \right\|_\sigma } &= \sum\limits_{0 \ne k \in \mathbb{Z}_ * ^\infty } {| {{{\widehat f}_k}} |{{\rm e}^{\sigma {{\left| k \right|}_\eta }}}}  = \sum\limits_{0 \ne k \in \mathbb{Z}_ * ^\infty } {\frac{{\left| {{{\widehat g}_k}} \right|}}{{\left| {\left\langle {k,\omega } \right\rangle } \right|}}{{\rm e}^{\sigma {{\left| k \right|}_\eta }}}}  \leqslant {\gamma ^{ - 1}}\sum\limits_{0 \ne k \in \mathbb{Z}_ * ^\infty } {\left| {{{\widehat g}_k}} \right|{\mathscr{R}} \big( {{{\left| k \right|}_\eta }} \big){{\rm e}^{\sigma {{\left| k \right|}_\eta }}}} \notag \\
			& \leqslant {\gamma ^{ - 1}}{\mathscr{E}}\left( \rho  \right)\sum\limits_{0 \ne k \in \mathbb{Z}_ * ^\infty } {\left| {{{\widehat g}_k}} \right|{{\rm e}^{\left( {\sigma  + \rho } \right){{\left| k \right|}_\eta }}}}  = {\gamma ^{ - 1}}{\mathscr{E}}\left( \rho  \right){\left\| g \right\|_{\sigma  + \rho }}.\notag
		\end{align}
		Below we give an almost \textit{critical} case. For $ \lambda>0 $, consider the approximation function
		\begin{equation}\label{FULLDeltalog}
			{\mathscr{R}} \left( x \right) \sim \exp \left( {x{{\left( {\log x} \right)}^{ - 1 - \lambda }}} \right),\quad x \to  + \infty .
		\end{equation}
		Then we can take  the control function as
		\[{\mathscr{E}}\left( \rho  \right) = \exp \left( {\exp \left( {{\rho^{ - \widetilde \lambda }}} \right)} \right)\]
		for some $ \widetilde \lambda  \in \left( {0,1} \right) $, see    Lemma \ref{lambdapiao} in the Appendix for details. Let $ {\delta _m} = {\left( {m + 1} \right)^{ - 2}} $ for $ m \in \mathbb{N} $. Then it follows that $ \sum\nolimits_{m = 0}^\infty  {{\delta _m}}  <  + \infty  $, and
		\[\sum\limits_{m = 0}^\infty  {{{\mathscr{E}}^{ - 1}}\big( {{{\rm e}^{{2^m}{\delta _m}}}} \big)}  \leqslant C\sum\limits_m {\frac{1}{{{{\left( {\log \log \left( {{{\rm e}^{{2^m}{\delta _m}}}} \right)} \right)}^{\frac{1}{\widetilde \lambda}  }}}}}  \leqslant C\sum\limits_m {\frac{1}{{{m^{\frac{1}{\widetilde \lambda} }}}}}  <  + \infty \]
		due to $ \widetilde \lambda^{-1} \in (1,+\infty) $. This implies that all frequencies $ \omega $ satisfying  \eqref{FULLdididi} along with \eqref{FULLDeltalog} are  of the weak Diophantine type. 
		
	\item \label{com:c5}  In fact, the approximation function \eqref{FULLDeltalog} mentioned in Comment \ref{com:c4} can be further weakened to
	\begin{equation}\label{FULLloooog}
		{\mathscr{R}} \left( x \right) \sim \exp \left( { {x}{\left( {\log x} \right)^{-1} \cdots {{( {\underbrace {\log  \cdots \log }_\ell x} )}^{-1 - \lambda }}}} \right),\quad x \to  + \infty
	\end{equation}
	where   $ 2 \leqslant \ell  \in {\mathbb{N}^ + } $ and $ \lambda>0 $ are arbitrary. Consequently, the control function $ {\mathscr{E}}(\rho) $ in \eqref{FULLEEEE} can be taken as
	\[{\mathscr{E}}\left( \rho  \right) = \exp \left( {\exp \left( { {\rho^{-1} \left( {\log {\rho ^{ - 1}}} \right)^{-1} \cdots {{( {\underbrace {\log  \cdots \log }_{\ell  - 1}{\rho ^{ - 1}}} )}^{-1 - \bar\lambda }}}} \right)} \right),\quad \rho  \to {0^ + }\]
	for some $ \bar \lambda  \in \left( {0,1} \right) $, in a similar manner. Under this setting, it is verified that the frequencies satisfying \eqref{FULLdididi} along with \eqref{FULLloooog} still exhibit weak Diophantine characteristics.  It is worth noting  that this is almost \textit{optimal} in the finite-dimensional case. Specifically, the  parameter $ \lambda >0$ in \eqref{FULLloooog} cannot degenerate to $ 0 $, as this would cause the optimal (at least for the $ 2 $-dimensional case) Brjuno condition \cite{MR377192} for KAM persistence to no longer be satisfied:
	\[\int_1^{ + \infty } {\frac{{\log \mathscr{R}\left( x \right)}}{{{x^2}}}{\rm{d}}x}  \geqslant C\int_M^{ + \infty } {\frac{1}{{x(\log x) \cdots (\log  \cdots \log x)}}{\rm{d}}x}  =  + \infty .\]
	In other words, our weak Diophantine condition is also almost \textit{optimal}.
	
	\item   Our results are of significant physical interest beyond the Aubry--MacKay conjecture discussed in Section \ref{SECMACKAY}. For example, Arnaiz \cite{MR4105369} directly applied KAM theorems to study semiclassical KAM and renormalization theorems based on counterterms (or modifying terms). This enabled the characterization of certain semiclassical measures and quantum limits. Consequently, our infinite-dimensional KAM Theorems \ref{FULLT1} and \ref{FULLT2} will play an important role in further addressing such physically related problems, \textit{both in the almost periodic sense and in the frequency-preserving sense.}
\end{enumerate}

\subsection{Further discussion on the  spatial structure}\label{SEC24}
As previously mentioned, certain  spatial structures are essential  in the infinite-dimensional Hamiltonian context.  This has given rise to   numerous  challenges  and  many open questions. For example, consider the reducibility of a linear Schr\"odinger equation subject to a small, unbounded, almost periodic perturbation on the thickened torus $\mathbb{T}_{\sigma}^\infty$ (see \eqref{FUUHUAN}) with analyticity, as discussed in \cite{MR4201442}. It is unknown whether the analyticity radius there can be weakened. For instance, could one take $|\operatorname{Im} x_j| \leqslant \sigma \log(1+\langle j\rangle)^p$ for some $p\gg 1$ instead of $|\operatorname{Im} x_j| \leqslant \sigma \langle j\rangle^\eta$, for all $ j \in \mathbb{Z} $?

 Furthermore, following  the spirit of Moser, the profound interplay among the non-resonance condition, regularity, and  spatial structures in infinite-dimensional Hamiltonian systems remains largely unexplored.    A recent advancement in this direction can be attributed to \cite{arXiv:2306.08211}, where the authors gave the sharp regularity for  Gevrey and even $ C^\infty $ infinite-dimensional vector fields, ensuring the persistence of full-dimensional tori. Notably, in some cases, the authors observed that the spatial structures are not limited to the usual thickened torus $ \mathbb{T}_{\sigma}^\infty $, provided a specific equilibrium condition is satisfied. We also refer to the recent contribution by Corsi \textit{et al.} \cite{MR4781767}, wherein the profound relationship between the spatial structure and other relevant quantities is explored.     
 
  Here, we briefly demonstrate  that a similar phenomenon holds true for our KAM theorems, thereby addressing the issue in \cite{MR4201442} within the abstract infinite-dimensional Hamiltonian framework (not for the linear Schr\"odinger equation).

Recall the previous notation for the spatial structure in this paper. Let us first modify the thickened torus $ \mathbb{T}_{\sigma}^\infty $ in \eqref{FUUHUAN} to
\[\mathbb{T}_\sigma ^\infty : = \left\{ {x = {{({x_j})}_{j \in \mathbb{Z}}}:\quad {x_j} \in \mathbb{C},\;\operatorname{Re} {x_j} \in \mathbb{T},\;\left| {\operatorname{Im} {x_j}} \right| \leqslant \sigma \varphi\left({{\left\langle j \right\rangle }^\eta }\right), \; j \in \mathbb{Z} } \right\},\]
where $\varphi: \mathbb{R}^+ \to \mathbb{R}^+$ is a monotonically increasing function with $\varphi(1) \geqslant 1$.  Then,  we similarly define the analytic space $ {\mathcal{G}}\left( {\mathbb{T}_\sigma ^\infty } \right)  $ on the torus as in \eqref{FUUJIEXI} with the norm of $ \ell $ given by $ {\left| \ell  \right|_\varphi }: = \sum\nolimits_{j \in \mathbb{Z}} {\varphi \left( {\left\langle j \right\rangle } \right)\left| {{\ell _j}} \right|}  $. It is evident   that $ {\left| \ell  \right|_\varphi } \geqslant \left| \ell  \right|_1: = \sum\nolimits_{j \in \mathbb{Z}} {\left| {{\ell _j}} \right|}  $,  and except for Lemma \ref{FULLtdfcyl} (just for the Diophantine case), the lemmas in Section \ref{FUUSECPRE} still hold.  The only point that needs to be stressed is the definition of the weighted norm of the action variable $ y $, as this is relatively special in the  KAM iteration process, see Section \ref{FULLSEC3} for details. To be more precise, let $ {\left\| y \right\|_\iota^* }: = \sum\nolimits_{j \in \mathbb{Z}} {\left| {{y_j}} \right|{\varphi ^\iota }\left( {\left\langle j \right\rangle } \right)}  $ for some $ \iota>0 $ if $ \varphi $ tends to $ +\infty $, and let $ {\left\| y \right\|^ * }: = {\sup _{j \in \mathbb{Z}}}\left| {{y_j}} \right|  $ if $ \varphi $ is constant (recall that $ \varphi $ is monotonically increasing). The weighted norms for (matrix valued) analytic functions and the infinite-dimensional weak Diophantine non-resonance condition can also be defined in a similar way. With the above replacement, we present the following KAM Theorem \ref{FULLT3} without proof:

\begin{theorem}\label{FULLT3}
Assume that the infinite-dimensional setting is replaced by the one described above, and that the assumptions in Theorem \ref{FULLT2} hold. Then the frequency-preserving full-dimensional KAM torus  survives small perturbations.
\end{theorem}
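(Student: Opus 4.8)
The plan is to re-run, with only cosmetic changes, the KAM iteration that proves Theorem \ref{FULLT2} (carried out in Sections \ref{FULLSEC3}--\ref{FULLSEC4}), after replacing throughout the weight $\langle j\rangle^{\eta}$ in \eqref{FUUHUAN}--\eqref{FUUJIEXI} by $\varphi(\langle j\rangle^{\eta})$ and the integer-vector norm $|\cdot|_{\eta}$ by $|\cdot|_{\varphi}$. The first observation is that $\varphi$ is monotonically increasing, so $|\ell|_{\varphi}\geqslant|\ell|$; hence Fourier coefficients of elements of the new space $\mathcal{G}(\mathbb{T}_{\sigma}^{\infty})$ decay at least as fast as in the original one, which is exactly what is used to justify Fourier analysis on the thickened torus, the Cauchy estimates in the angular directions, and the product and composition estimates of Section \ref{FUUSECPRE}; all of these lemmas survive verbatim. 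The only lemma specific to the weight $\langle j\rangle^{\eta}$ is Lemma \ref{FULLtdfcyl}, which solves the homological equation \eqref{FULLtdfc} under the Diophantine condition of Definition \ref{FULLDio}; since Theorem \ref{FULLT3} is stated under the \emph{weak} condition of Definition \ref{weakdio}, this lemma is not invoked. Instead one reads Definition \ref{weakdio} with $|\cdot|_{\varphi}$ in place of $|\cdot|_{\eta}$: the bound $\|f\|_{\sigma}\leqslant\mathscr{E}(\rho)\|g\|_{\sigma+\rho}$ is then an abstract hypothesis on $\omega$, and the only facts used about $\mathscr{E}$ in the iteration are the two summability conditions \eqref{FULLweakcon}, which are untouched; in particular the contraction strip sequence and the uniform convergence of the KAM error go through unchanged.

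The genuine point of care, as flagged before the statement, is the weighted norm on the action variable $y$. Each KAM step produces a symplectic map of the form $y=v(\xi)+\langle u_{\xi}(\xi)^{-1},\kappa\rangle$, so the new $y$-coordinate carries one $x$-derivative of the near-identity change $u-\mathrm{id}$; on the thickened torus with angular radius $\sigma\varphi(\langle j\rangle^{\eta})$, a Cauchy estimate in the $x_{j}$-direction costs a factor comparable to $\varphi(\langle j\rangle^{\eta})$. Therefore, when $\varphi\to+\infty$ one must weight $y$ by a positive power $\varphi^{\iota}(\langle j\rangle)$, i.e. use $\|y\|_{\iota}^{*}:=\sum_{j\in\mathbb{Z}}|y_{j}|\varphi^{\iota}(\langle j\rangle)$, with $\iota>0$ chosen large enough that the $\varphi$-loss coming from $u_{\xi}^{-1}$ (and from the Taylor expansion of $\mathscr{H}$ in $y$) is absorbed into the stronger weight; then $\langle u_{\xi}(\xi)^{-1},\kappa\rangle$ again lies in the $\|\cdot\|_{\iota}^{*}$-space and the pairing $\langle\cdot,\cdot\rangle$ is well defined on the relevant pairs of spaces. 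When $\varphi$ is constant the angular radius is uniform, no loss occurs, and the plain sup-norm $\|y\|^{*}:=\sup_{j}|y_{j}|$ works. With this choice the domain inclusion $\phi(\mathscr{D}_{\sigma/4,\sigma/4})\subseteq\mathscr{D}_{\sigma,\sigma}$ and the estimates of the type \eqref{FULLsmall2} are obtained exactly as in Theorem \ref{FULLT2}, the constants now depending additionally on $\iota$ and on the finitely many $\varphi$-factors appearing in a single step.

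Granting these two adjustments, the rest of the proof is the argument of Theorem \ref{FULLT2} unchanged: at each stage one uses the nondegeneracy carried by $\mathscr{Q}$ — the bound $\|\mathscr{Q}\|_{\sigma,\sigma}\leqslant M$ and the invertibility of $\int_{\mathbb{T}^{\infty}}\mathscr{Q}(x,0)\,\mathrm{d}x$ in $\mathscr{I}_{\sigma}^{\infty}$, both of which survive the modified norms — to build a generating function removing the zeroth and first Taylor coefficients in $y$ while \emph{exactly} fixing the prescribed frequency $\omega$, estimates the new Hamiltonian on a slightly smaller $(\sigma,\varphi)$- and $\varphi^{\iota}$-domain, and iterates; the Newtonian (super-exponential) convergence of the generating-function scheme together with \eqref{FULLweakcon} yields the limiting analytic symplectic transformation and the full-dimensional invariant torus carrying frequency $\omega$.

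The main obstacle is thus not the KAM machinery but the compatibility of the two spatial weights: one must check that there is a fixed $\iota>0$ for which a single KAM step carries the $(\sigma,\varphi)$-structure in $x$ and the $\varphi^{\iota}$-structure in $y$ into a structure of the same type on a smaller domain, i.e. that the $\varphi$-losses incurred in passing from $u-\mathrm{id}$ to $u_{\xi}^{-1}$, then to $\langle u_{\xi}(\xi)^{-1},\kappa\rangle$, and in the $y$-expansion of $\mathscr{H}$, do not accumulate faster than the scheduled contraction of the domains. This is a balance condition relating the growth of $\varphi$ to the fixed strip-loss schedule of the iteration, and it is the frequency-preserving Hamiltonian analogue of the equilibrium between regularity and spatial structure observed in \cite{arXiv:2306.08211}; verifying it for a wide class of $\varphi$ — in particular $\varphi(t)\sim(\log(1+t))^{p}$, which settles the question raised in \cite{MR4201442} at the abstract level — is the real content of the argument.
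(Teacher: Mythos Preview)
The paper explicitly omits the proof of Theorem \ref{FULLT3} (``we present the following KAM Theorem \ref{FULLT3} without proof''), so there is no argument to compare against directly; your sketch follows precisely the outline the paper gives in the paragraph preceding the statement---namely, replace $\langle j\rangle^{\eta}$ by $\varphi(\langle j\rangle^{\eta})$, note that all lemmas of Section \ref{FUUSECPRE} except Lemma \ref{FULLtdfcyl} carry over, use the abstract weak Diophantine hypothesis in place of that lemma, and single out the $y$-norm (either $\|\cdot\|_{\iota}^{*}$ or $\sup$) as the one place requiring care. Your explanation of \emph{why} the $\varphi^{\iota}$ weight is needed (Cauchy loss of a $\varphi$-factor per angular derivative feeding into $u_{\xi}^{-1}$) goes slightly beyond what the paper states and is a reasonable heuristic, though the paper does not spell out the compatibility condition you identify in your last paragraph; that balance is left implicit there as well.
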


\section{Application to the Aubry--MacKay conjecture}\label{SECMACKAY}
\subsection{Discrete breathers}\label{SEC31}
Periodic, quasi-periodic, and almost periodic breathers---namely, solutions to nonlinear systems that are periodic, quasi-periodic, or almost periodic in time and localized in space---have long been a subject of significant interest\footnote{Notwithstanding the difference in terminology between KAM theory and physics-related problems, we continue to use the term ``breather'' in this section to correspond with historical conventions in the latter.}. The difficulty of establishing their existence increases progressively from the periodic to the quasi-periodic and, ultimately, the almost periodic case; consequently, the relevant literature becomes correspondingly sparse. As is well known, only a few partial differential equations  admit continuous breathers. In contrast, many partial difference equations, or equivalently, many infinite systems of ordinary differential equations, admit  \textit{discrete} breathers, for which there are some profound reasons related to spectral theory. From the perspectives of physics (e.g., condensed matter physics) and biophysics, 	investigating the latter is both natural and fundamentally important, as such discrete breathers can produce complex  effects of energy focusing and transfer. In this case, the spatial variable is not continuous in space; instead, it varies over some lattice, such as in a crystal.  The foundational literature on the existence of discrete \textit{periodic} breathers is extensive,  culminating in well-established theoretical frameworks. The origins of this line of research can be traced back to the seminal work of Fr\"ohlich \textit{et al.} \cite{MR0833019}. Key contributions in this direction include works by Aubry \cite{MR1264115,MR1353962,MR1464249}, MacKay--Aubry \cite{MR1304442}, Arioli--Gazzola \cite{MR1375653}, 
Arioli \textit{et al.} \cite{Arioli},	Ahn \cite{MR1632649},	 and the references therein, alongside recent advancements such as those by Pelinovsky--Sakovich \cite{MR2997701} and Arioli--Koch \cite{MR4034784}.	In addition, comprehensive surveys on discrete \textit{periodic} breathers can be found in the works of MacKay \cite{MR1353954,MR1808178}, Flach and Willis \cite{MR1607320}, and Aubry \cite{MR2230489}. However, results concerning discrete \textit{quasi-periodic/almost periodic} breathers remain relatively scarce, and the exploration is more challenging. Notable contributions  in this direction  include the comprehensive works of Yuan \cite{MR1889993}, and Fontich \textit{et al.} \cite{MR3353644}, along with the  references cited therein.

\subsection{On the Aubry--MacKay conjecture: Persistence of frequency-preserving almost periodic breathers}\label{SECAMC}
The Aubry--MacKay conjecture was proposed in \cite{MR1304442,MR1353962}. It poses the question of the existence of quasi-periodic breathers, for instance, in the simplest case of a $ 1 $-dimensional chain of identical one degree of freedom oscillators with symmetric nearest neighbour coupling \cite{MR1304442}:
\[\frac{{{{\rm d}^2}{x_n}}}{{{\rm d}{t^2}}} + V' \left( {{x_n}} \right) = \alpha \left( {{x_{n + 1}} - 2{x_n} + {x_{n - 1}}} \right),\quad n \in \mathbb{Z},\]
where the local potential $ V $ satisfies $ V'\left( 0 \right) = 0 $ and $ V''\left( 0 \right) > 0 $, and $ \alpha>0 $ is a sufficiently small constant. This particularly simple model is typically referred to as the \textit{Klein-Gordon lattice} (see	\cite{MR1808178} for example). As mentioned in Section \ref{SEC31}, the existence of quasi-periodic and almost periodic breathers has been resolved for some simple systems using  KAM techniques, as conceived in \cite{MR1304442,MR1353962}. However, the frequency-preserving case has not yet been addressed, which is a crucial issue from the perspective of the persistence of dynamical systems. Moreover, for the almost periodic case, it is preferable to utilize the infinite-dimensional Diophantine non-resonance condition proposed by Bourgain (see Definition \ref{FULLDio}), as explained by Corsi \textit{et al.} \cite{MR4781767}. Therefore, many fundamental questions remain to be explored regarding the Aubry--MacKay conjecture \cite{MR1304442,MR1353962}.

In this paper, using Bourgain's non-resonance condition, we first investigate this conjecture for frequency-preserving almost periodic breathers and, more generally, for networks of weakly coupled oscillators:
\begin{equation}\label{MACXT}
	\frac{{{{\rm d}^2}{x_n}}}{{{\rm d}{t^2}}} + V'\left( {{x_n}} \right) = \varepsilon_n  {W'\left( {{x_{n + 1}} - {x_n}} \right) - \varepsilon_{n-1}W'\left( {{x_n} - {x_{n - 1}}} \right)} ,\quad n \in \mathbb{Z},
\end{equation}
given  a local potential $ V $ and a  coupling potential $ W $. To provide the  reader with an intuitive grasp of the coupling structure, the interaction effects are illustrated in Figure \ref{FIG1}.
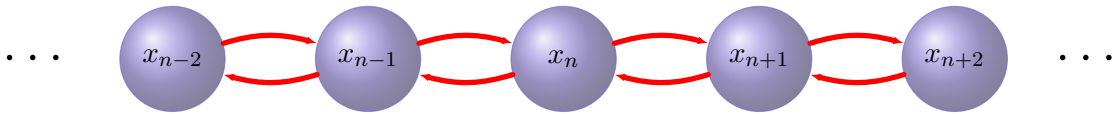
\begin{figure}[H]
	\centering
	\begin{tikzpicture}[
		arr/.style={-{Latex[length=4pt,bend]}, line cap=round}
		]
		
		\pgfmathsetmacro{\dx}{2.6}   
		\pgfmathsetmacro{\wNear}{2.0pt}
		\pgfmathsetmacro{\wMid}{1.3pt}
		\pgfmathsetmacro{\wFar}{0.7pt}
		\pgfmathsetmacro{\bend}{6mm}
		
		\pgfmathsetmacro{\leftB}{-1.3}
		\pgfmathsetmacro{\rightB}{4*\dx+1.3}
		\pgfmathsetmacro{\shift}{-(\leftB+\rightB)/2}  
		\begin{scope}[shift={(\shift,0)}]
			
			\foreach \idx [count=\i from 0] in {n-2,n-1,n,n+1,n+2}{
				\begin{scope}[opacity=0.58]
					\node[ball3Donly] (ball\i) at (\i*\dx,0) {};
				\end{scope}
					\node[text=black] at (\i*\dx,0) {$\boldsymbol{x_{\idx}}$};
			}
			
			\node[scale=1.8, black] at (-1.8,0) {$\cdots$};
			\node[scale=1.8, black] at (4*\dx+1.8,0) {$\cdots$};
			
			\draw[arr,red,line width=\wNear] (ball0) to[bend left=\bend] (ball1);
			\draw[arr,red,line width=\wNear] (ball1) to[bend left=\bend] (ball0);
			\draw[arr,red,line width=\wNear] (ball1) to[bend left=\bend] (ball2);
			\draw[arr,red,line width=\wNear] (ball2) to[bend left=\bend] (ball1);
			\draw[arr,red,line width=\wNear] (ball2) to[bend left=\bend] (ball3);
			\draw[arr,red,line width=\wNear] (ball3) to[bend left=\bend] (ball2);
			\draw[arr,red,line width=\wNear] (ball3) to[bend left=\bend] (ball4);
			\draw[arr,red,line width=\wNear] (ball4) to[bend left=\bend] (ball3);

		\end{scope}
	\end{tikzpicture}
	\caption{Weak coupling in system \eqref{MACXT}: only neighboring oscillators are coupled.}
	\label{FIG1}
\end{figure}

In this context, we impose the following basic assumptions:

\begin{enumerate}[label=(A\arabic*), ref=(A\arabic*)]

\item \label{con:a1} The local potential $ V(z) $ and the coupling potential $ W(z) $ are analytic in the complex strip $ \{z \in \mathbb{C}:\left|\operatorname{Im}{z}\right|\leqslant\delta_0 \} $ for some $ \delta_0>0 $, and $ V(x) $ is positive definite for  real $ x $.

\item \label{con:a2} $ V(0)=0 $, and $ W(x)=\mathcal{O}\left(|x|^3\right) $ as  $ |x|\to 0^+ $.

\item \label{con:a3} For a given compact interval $ \mathcal{I}\subset \mathbb{R}^+ $ and any $ h \in \mathcal{I} $, the equation $ {y^2}/2 + V\left( x \right) = h $ defines a simple closed curve $ \Gamma(h) $ which encloses the origin $ (0,0) $ in the $ (x,y) $-plane.

\item \label{con:a4}  Let $ \rho=\rho(h) $ be the area of the region enclosed by the closed curve $ \Gamma(h) $, i.e., 
\[\rho \left( h \right) = \oint_{\frac{1}{2} y^2 + V\left( x \right) = h} {y{\rm d}x} .\]
We assume that under the potential $ V $, for any $ h \in \mathcal{I} $,  $ \rho '\left( h \right) \ne 0 $ and $\rho ''\left( h \right) \ne 0 $, and $ \rho^{-1}(h) $ is analytic.

\end{enumerate}

For the sake of simplicity, we adopt the notation that $ a \prec b $ with $ a=\left(a_n\right)_{n \in \mathbb{Z}} $ and $ b=\left(b_n\right)_{n \in \mathbb{Z}} $ implies $ |a_n|<|b_n |$ for all $ n \in \mathbb{Z} $. A similar principle applies to infinite-dimensional matrices. Now, our main result in this section is stated as follows:

\begin{theorem}\label{MACDINGLI}
	Assuming \ref{con:a1}--\ref{con:a4}, there exists some $ \varepsilon^*=\left(\varepsilon_n^*\right)_{n \in \mathbb{Z}} $ with $ \|\varepsilon^*\|_{1}:=\sum\nolimits_{n \in \mathbb{Z}} {\left| {{\varepsilon^* _n}} \right|} >0 $  sufficiently small, such that for all $ 0\prec\varepsilon=\left(\varepsilon_n\right)_{n \in \mathbb{Z}}\prec\varepsilon^* $,  system \eqref{MACXT} admits a large family of frequency-preserving almost periodic breathers, where the frequencies have large magnitudes. Moreover, these frequencies are of Bourgain's Diophantine type as defined in Definition \ref{FULLDio}. If \ref{con:a4} holds for all $ h \in \overline {\mathcal{I}} $, then such frequency-preserving almost periodic breathers can have large, small, and mixed frequency-magnitudes.
\end{theorem}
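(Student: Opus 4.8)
The plan is to recast \eqref{MACXT} as an infinite-dimensional Hamiltonian system in Kolmogorov normal form about a full-dimensional torus with a \emph{prescribed} frequency, and then to invoke Theorem \ref{FULLT1} (or Theorem \ref{FULLT3} if a non-standard spatial weight turns out to be needed). Setting $y_n=\dot x_n$, system \eqref{MACXT} is Hamiltonian with generating function $\mathcal H(x,y)=\sum_{n\in\mathbb Z}\bigl(\tfrac12 y_n^2+V(x_n)\bigr)+\sum_{n\in\mathbb Z}\varepsilon_n W(x_{n+1}-x_n)$ on a suitable weighted phase space. For $\varepsilon=0$ this decouples into identical one-degree-of-freedom oscillators $\tfrac12 y_n^2+V(x_n)=h_n$; by (A3), for $h_n\in\mathcal I$ the level set $\Gamma(h_n)$ is a simple closed curve around the origin, so I would pass block by block to action--angle coordinates $(x_n,y_n)\mapsto(\theta_n,J_n)$ with $J_n$ proportional to $\rho(h_n)$. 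Under this change the $n$-th oscillator becomes $e(J_n)$, where $e$ inverts $J\mapsto\rho(e)$ up to a constant; by (A4) the condition $\rho'\ne0$ together with the analyticity of $\rho^{-1}$ makes the whole action--angle map real-analytic, while $\rho''\ne0$ yields the nondegeneracy $e''(J_n)\ne0$. After this transformation $\mathcal H$ becomes $\sum_n e(J_n)+\mathcal P(\theta,J)$ with $\mathcal P$ the real-analytic coupling, and the angular analyticity width available for the $n$-th block grows as $h_n\to 0^+$ since the origin is an elliptic equilibrium ($V''(0)>0$).

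Next I would prescribe the frequency and put $\mathcal H$ in Kolmogorov normal form. Fix $0<\gamma<1$ and $\mu>1$, and choose a target $\omega^\ast=(\omega^\ast_n)_{n\in\mathbb Z}\in\mathcal D_{\gamma,\mu}$ from Definition \ref{FULLDio}; since $\mathcal D_{\gamma,\mu}$ has complement of measure $O(\gamma)$ in any box, this is possible and one in fact has a large supply of such $\omega^\ast$ (this is the source of ``plenty''). As $J\mapsto e'(J)$ is a diffeomorphism on the relevant range, set $J^\ast_n:=(e')^{-1}(\omega^\ast_n)$, so the prescribed energies $h^\ast_n$ lie in $\mathcal I$. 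Writing $J=J^\ast+y$, $x=\theta$ and Taylor-expanding $e$ about $J^\ast$ casts $\mathcal H$ as $\mathrm{const}+\langle\omega^\ast,y\rangle+\tfrac12\langle\mathscr Q(x,y)y,y\rangle+\mathcal P(x,y)$ with $\mathscr Q=\mathrm{diag}\bigl(e''(J^\ast_n+\,\cdot\,)\bigr)_n$ Hermitian, bounded, with bounded inverse $\mathrm{diag}(1/e''(J^\ast_n+\,\cdot\,))$, so \eqref{FULLninininini} holds. It then remains to check the smallness conditions \eqref{FULL1}--\eqref{FULL3}, which all reduce to weighted-norm estimates on $\mathcal P$, $\mathcal P_y(\cdot,0)$ and $\mathcal P_{yy}$: analyticity of $\mathcal P$ on the thickened torus $\mathbb T^\infty_\sigma$ follows from (A1) together with the growth of the available angular width from the previous step, and the required smallness is obtained by taking $\|\varepsilon^\ast\|_{\ell^1}$ (hence each $|\varepsilon_n|$) small and using (A2), $W(z)=\mathcal O(|z|^3)$, so that the coupling across low-amplitude blocks is cubically small.

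Granting the normal form, Theorem \ref{FULLT1} produces a real-analytic symplectic map $\phi$ and $\mathscr W=\mathcal H\circ\phi$ with $\mathscr W_\xi(\xi,0)=0$ and $\mathscr W_\kappa(\xi,0)=\omega^\ast$, so $\{\kappa=0\}$ is an invariant full-dimensional torus carrying the linear flow $\xi\mapsto\xi+\omega^\ast t$. Pulling this torus back through $\phi$ and the action--angle map gives a solution of \eqref{MACXT} of the form $x_n(t)=X_n(\xi_0+\omega^\ast t)$, where each $X_n\colon\mathbb T^\infty\to\mathbb R$ is real-analytic and $1$-periodic in every variable: it is almost periodic in $t$ (its frequency module is generated by the infinitely many $\omega^\ast_n$) and spatially localized ($x_n(t)\to0$ uniformly in $t$ as $|n|\to\infty$, forced by the decay of the amplitude profile $h^\ast_n$ built in at the previous step), i.e.\ an almost periodic breather whose frequency is exactly $\omega^\ast\in\mathcal D_{\gamma,\mu}$; this gives simultaneously ``frequency-preserving'' and ``Bourgain's Diophantine type''. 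The large/small/mixed-amplitude alternatives then come from the freedom in choosing $\omega^\ast$ within the range dictated by (A4): under (A4) on a compact $\mathcal I$ one realizes large amplitudes, whereas if (A4) persists on $\overline{\mathcal I}$ (so that $\rho''\ne0$ survives down to the elliptic point) one may instead place some or all $\omega^\ast_n$ near $\sqrt{V''(0)}$, producing small- or mixed-amplitude breathers.

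The main obstacle is the verification underlying the normal-form step, namely the balancing of three competing demands: Bourgain's Diophantine condition forces the angular analyticity radius of the thickened torus to grow with $|n|$; the action--angle map of a fixed-energy oscillator has only finite angular analyticity width unless $h_n\to0$; and the argument $x_{n+1}-x_n$ of $W$ must remain inside the strip $\{|\mathrm{Im}\,z|\le\delta_0\}$ where (A1) gives analyticity. Reconciling these --- by prescribing a suitably decaying amplitude profile $(h^\ast_n)$, a matching fast decay of $(\varepsilon_n)$ (admissible since only $\|\varepsilon^\ast\|_{\ell^1}<\infty$ small is required), and, if necessary, a slowly growing spatial weight $\varphi$ through Theorem \ref{FULLT3}, all while keeping $\omega^\ast$ genuinely Bourgain--Diophantine (which in turn constrains how fast $\omega^\ast_n$ may tend to $\sqrt{V''(0)}$) --- is the heart of the argument; once the data are arranged so that \eqref{FULL1}--\eqref{FULLninininini} hold, the remainder is a routine translation into Theorem \ref{FULLT1}.
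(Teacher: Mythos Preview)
Your plan coincides with the paper's: pass to site-by-site action--angle variables (the paper writes $H_0=\rho^{-1}$ via a generating function $S(x,\rho)$), Taylor expand about a base action $\xi$ to obtain $\langle\omega(\xi),I\rangle+\tfrac12\langle A(\xi)I,I\rangle+\cdots$ with $A(\xi)=\mathrm{diag}(H_0''(\xi_n))$, use (A4) together with Lagrange inversion to get a two-sided bound on $H_0''$, select a Diophantine $\omega$ from the full-measure set, and apply Theorem~\ref{FULLT1}. The large/small/mixed trichotomy and the role of extending (A4) to the closure are handled the same way.

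Two deviations deserve attention. First, (A1)--(A4) do \emph{not} give $V''(0)>0$: the paper explicitly treats $V(x)=x^{2p}$ with $p\ge2$, for which $V''(0)=0$, so your appeal to an elliptic equilibrium (and to frequencies clustering near $\sqrt{V''(0)}$ in the small-amplitude case) is not available in this generality; the paper instead deduces the nondegeneracy at $0$ directly from $\rho'(0)\ne0$, $\rho''(0)\ne0$ via Lagrange inversion. Second, for the primary (large frequency-amplitude) conclusion the paper keeps all $\xi_n$ in the interior of $H_0^{-1}(\mathcal I)$, hence bounded away from $0$; no decaying amplitude profile is used, and the only device the paper invokes to place the perturbation in $\mathcal G(\mathbb T^\infty_\sigma)$ is rapid decay of $\varepsilon_n$ (it suggests $\varepsilon_n\sim q\,e^{-R|n|}$ with $R$ large). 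Your resolution bundles this together with a decaying profile $(h_n^\ast)$ and a possible appeal to Theorem~\ref{FULLT3}, but forcing $h_n^\ast\to0$ drags you into the regime that already requires (A4) at $h=0$, which is precisely what the first part of the theorem avoids. For that part, align with the paper and rely on the $\varepsilon_n$ decay alone; the decaying-action mechanism (with the explicit lower bound $\xi_n>\langle n\rangle^{-\varsigma-2}$) is what the paper uses only in the small-amplitude case.
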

\begin{remark}
	As remarked by Yuan \cite[Remark 1.2]{MR1889993}, one may not expect the perturbed breathers to preserve their original frequency, in light of Bourgain's work \cite{MR1470416}. However, the nondegeneracy here ensures precisely this invariance. Indeed, one could also anticipate that the quasi-periodic breathers in \cite{MR1889993} would preserve their prescribed frequencies.
\end{remark}

\begin{proof}
Let $x_n'=y_n$ for $n\in\mathbb{Z}$. System \eqref{MACXT} can then be written as a Hamiltonian system with the Hamiltonian
	\begin{equation}\label{MACHAM}
		H = \sum\limits_{n \in \mathbb{Z}} {\left( {\frac{1}{2}y_n^2 + V\left( {{x_n}} \right) + \varepsilon_n W\left( {{x_{n + 1}} - {x_n}} \right)} \right)} .
	\end{equation}
This identification is justified since, for all $ n \in \mathbb{Z} $, we have
	\[\left\{ \begin{gathered}
		{x_n'} = {\left( {{H_y}} \right)_n} = {y_n}, \hfill \\
		{y_n'} = {\left( { - {H_x}} \right)_n} =  - V'\left( {{x_n}} \right) + {\varepsilon _n}W'\left( {{x_{n + 1}} - {x_n}} \right) - {\varepsilon _{n - 1}}W'\left( {{x_n} - {x_{n - 1}}} \right) = {x_n''}. \hfill \\ 
	\end{gathered}  \right.\]
Next,	we  perform the standard reduction to action-angular variables following Arnold \cite{MR690288}. In view of Assumption \ref{con:a4}, we can implicitly define a locally analytic function $ H_0(\rho) $ by 
	\begin{equation}\label{MACrho}
		\rho  = \oint_{\frac{1}{2}{y^2} + V\left( x \right) = {H_0}\left( \rho  \right)} {y{\rm d}x}, 
	\end{equation}
	i.e., $ \rho(h)  = H_0^{ - 1}(h) $. Now, define a generating function $ S(x,\rho) $ as
	\[S\left( {x,\rho } \right) = \int_{{\Gamma ^ * }} {y{\rm d}x} ,\]
	provided that $ \Gamma ^* $ is the part of the closed curve $ {{y^2}/2 + V\left( x \right) = {H_0}\left( \rho  \right)} $ connecting the $ y $-axis with point $ (x,y) $, oriented clockwise. By constructing the locally analytic mapping $ {\psi _0}:\left( {\theta ,\rho } \right) \mapsto \left( {x,y} \right) $ with
	\[{S_x}\left( {x,\rho } \right) = y,\quad {S_\rho }\left( {x,\rho } \right) = \theta ,\]
	we can verify that
	\[{\rm d}x \wedge {\rm d}y = {\rm d}x \wedge \left( {{S_{xx}}{\rm d}x + {S_{x\rho }}{\rm d}\rho } \right) = {S_{x\rho }}{\rm d}x \wedge {\rm d}\rho  = \left( {{S_{\rho x}}{\rm d}x + {S_{\rho \rho }}{\rm d}\rho } \right) \wedge {\rm d}\rho  = {\rm d}\theta  \wedge {\rm d}\rho ,\]
	which implies that the mapping 
	\[\Psi :\quad {\psi _0}\left( {{\theta _n},{\rho _n}} \right) = \left( {{x_n},{y_n}} \right),\quad n \in \mathbb{Z}\]
	is indeed a symplectic one. Then, the Hamiltonian in \eqref{MACHAM} can be transformed into
	\begin{equation}\label{MACHAM2}
		H = H\left( {\theta ,\rho } \right) = \sum\limits_{n \in \mathbb{Z}} {{H_0}\left( {{\rho _n}} \right)}  +  \sum\limits_{n \in \mathbb{Z}} \varepsilon_n{W\left( {{x_{n + 1}}(\theta_{n+1}, \rho_{n+1}) - {x_n}(\theta_n, \rho_n)} \right)} .
	\end{equation}
	For any given $ \xi  = {\left( {{\xi _n}} \right)_{n \in \mathbb{Z}}} \in {\left( {H_0^{ - 1}\left( \mathcal{I} \right)} \right)^o} $, we let $ \rho  = {\left( {{\rho _n}} \right)_{n \in \mathbb{Z}}} = {\left( {{I_n} + {\xi _n}} \right)_{n \in \mathbb{Z}}} = I + \xi  $, and expand $ {H_0}\left( {{\rho _n}} \right) $ into its Taylor expansion with respect to $ I_n $ as 
	\begin{equation}\label{MACH2}
		{H_0}\left( {{\rho _n}} \right) = {H_0}\left( {{I_n} + {\xi _n}} \right) = {H_0}\left( {{\xi _n}} \right) + {{H}_0'}\left( {{\xi _n}} \right){I_n} + \frac{1}{2}{{H}_0''}\left( {{\xi _n}} \right)I_n^2 + \mathcal{O}\left( {{{\left| {{I_n}} \right|}^3}} \right),\quad n \in \mathbb{Z},
	\end{equation}
	where $ \left|I_n\right| \leqslant r \xi_n $  for some $ 0<r<1/2 $. Then, the Hamiltonian in \eqref{MACHAM2}  becomes 
	\begin{equation}\label{MACHA3}
		H = H\left( {\theta ,I,\xi } \right) =  \left\langle {\omega \left( \xi  \right),I} \right\rangle  + \frac{1}{2}\left\langle {A\left( \xi  \right)I,I} \right\rangle+ \mathcal{O}\left( {{{\left| I \right|}^3}} \right) + \widetilde{P}(\theta,I,\xi),
	\end{equation}
	where
	\[\widetilde{P}(\theta,I,\xi)=\sum\limits_{n \in \mathbb{Z}} \varepsilon_n{W\left( {{x_{n + 1}}(\theta_{n+1}, I_{n+1},\xi_{n+1}) - {x_n}(\theta_{n}, I_{n},\xi_{n})} \right)}.\]
	Here, $ \omega \left( \xi  \right) = {\left( {{{H}_0'}\left( {{\xi _n}} \right)} \right)_{n \in \mathbb{Z}}} $ is a frequency mapping, and 
	\[A\left( \xi  \right) = \left( {\begin{array}{*{20}{c}}
			\ddots &{}&{} \\ 
			{}&{{{H}_0''}\left( {{\xi _n}} \right)}&{} \\ 
			{}&{}& \ddots  
	\end{array}} \right)\]
	is an infinite-dimensional diagonal matrix. Moreover, if there exists a universal constant $ c>1 $ such that $ {c^{ - 1}} \leqslant \left| {{{H}_0''}\left( {{\xi _n}} \right)} \right| \leqslant c $ for all $ n \in \mathbb{Z} $, then $ A(\xi) $ is invertible. Indeed, this follows from Assumption \ref{con:a4} and the Lagrange inversion theorem (see, e.g., \cite{MR3534068,MR4669236}) applied on $\mathcal{I}$.   As Yuan \cite{MR1889993} has pointed out, for infinite-dimensional Hamiltonian systems reduced using action-angular variables, the application of infinite-dimensional KAM theorems (e.g., see P\"oschel \cite{MR1037110}) that directly yield full-dimensional tori can establish the existence of almost periodic breathers. Thus, this conclusion is not surprising. \textit{However, the frequency-preserving case  does not have a direct counterpart.} We will achieve this by utilizing the invertibility of $ A(\xi) $.

  Observe  for the perturbed system \eqref{MACXT}, the perturbation is exclusively reflected in the tail $P$ of the Hamiltonian \eqref{MACHA3}. As for the unperturbed part of \eqref{MACHA3}, it is evident that it possesses a large family of invariant tori, with toral frequencies $\omega(\xi)$ of Bourgain's Diophantine type as $\xi$ varies appropriately. This can be seen either by directly analyzing the first integrals of the unperturbed system in \eqref{MACXT} or by applying KAM theory. Therefore, as long as we can prove that these tori survive perturbations and the frequencies of these tori remain unchanged, we will have obtained a frequency-preserving version of the Aubry--MacKay conjecture. In other words, the solutions starting from these preserved tori are indeed almost periodic breathers of \eqref{MACXT}. In what follows, we shall employ Theorem \ref{FULLT1} to prove this.

	By appropriately choosing $0\prec\varepsilon = (\varepsilon_n)_{n\in\mathbb{Z}}$, i.e., ensuring its components $\varepsilon_n$ decay rapidly enough with respect to $|n|$ and that $\|\varepsilon\|_1 = \sum_{n\in\mathbb{Z}}|\varepsilon_n| \ll 1$, the Hamiltonian in \eqref{MACHA3} becomes analytic in $(\theta,I)$ on a complex domain  
	\[{\mathscr{D}_{\sigma ,r}}: = \left\{ {\left( {\theta ,I} \right):\quad \left| {\operatorname{Im} {\theta _n}} \right| \leqslant \sigma {\left\langle n \right\rangle ^\eta } ,\;\left| {{I_n}} \right| \leqslant r\xi_n,\;n \in \mathbb{Z}} \right\}\]
	with some fixed $ 0<\sigma,r<1/2 $ and $ \eta \geqslant 2 $. For example, $ {\varepsilon _n} \sim q{{\rm e}^{ - R\left| n \right|}} $ with some $ 0<q\ll1 $ and $ R\gg1  $ larger than the analyticity radius of $ W\left( {{x_{n + 1}}({\theta _{n + 1}},{I_{n + 1}}) - {x_n}({\theta _n},{I_n})} \right) $. Then, the perturbation of the Hamiltonian in \eqref{MACHA3} is sufficiently small on the domain $ \mathscr{D}_{\sigma, r} $, in the analytic norm. Now, by fixing large $ \xi $  appropriately, we can obtain  an infinite-dimensional Diophantine frequency $ \omega\in \mathcal{I}^\mathbb{Z} $ with large frequency-magnitude. This is achievable since the set of such frequencies has full measure. Therefore, by applying Theorem \ref{FULLT1} concerning the infinite-dimensional Diophantine non-resonance condition, we prove the existence of a large family of frequency-preserving almost periodic breathers with large frequency magnitudes for \eqref{MACXT}.
	
	In the following, we consider the small-magnitude case and also the mixed-magnitude case. Assume that Assumption \ref{con:a4} holds with all $ h \in \overline{\mathcal{I}} $, i.e., $ H_0(h)=\rho^{-1}(h) $ is analytic on $ \overline{\mathcal{I}} $. This implies that both  $ H_0'(\xi_n) $ and  $ H_0''(\xi_n) $ in \eqref{MACH2} are uniformly bounded whenever $ \xi_n $ is chosen sufficiently close to $ 0 $. Moreover, $ |H_0''(\xi_n)| $ is uniformly bounded from below, as we can establish $ |H_0''(0)|>0 $ by utilizing  $ \rho''(0)\ne0 $ and the Lagrange inversion theorem (which necessitates  $ \rho'(0)\ne0 $). To be more precise, for $ 0<h \ll 1 $, we have 
	\[\rho \left( h \right) = {A_1}h + {A_2}{h^2} +  \cdots  = \sum\limits_{j = 1}^\infty  {{A_j}{h^j}} , \quad A_1,A_2\ne0,\]
	which implies that $ H_0 $ is nondegenerate, i.e.,
	\[{H_0}\left( h \right) = {\rho ^{ - 1}}\left( h \right) = \frac{1}{{{A_1}}}h - \frac{{{A_2}}}{{A_1^3}}{h^2} +  \cdots  = \sum\limits_{j = 1}^\infty  {{B_j}{h^j}} ,\quad B_1,B_2\ne0.\]
	However, in order to guarantee the applicability of our KAM Theorem \ref{FULLT1}, $ \xi_n $ cannot be arbitrarily close to $ 0 $, otherwise the analyticity radius of the action variable $ I_n $ may not be adequate. A suitable choice is $ \xi_n> {\left\langle n \right\rangle ^{ - \varsigma-2  }}$ for all $ n \in \mathbb{Z} $, where $ \varsigma>0 $. In this case,  letting $ \varepsilon $ be sufficiently small in a similar way,  the Hamiltonian in \eqref{MACHA3} becomes analytic in $ (\theta,I) $ on a  complex domain 
	\[{\widetilde{\mathscr{D}}_{\sigma ,r}}: = \left\{ {\left( {\theta ,I} \right):\quad \left| {\operatorname{Im} {\theta _n}} \right| \leqslant \sigma {\left\langle n \right\rangle ^\eta } ,\;\left| {{I_n}} \right| \leqslant {\left\langle n \right\rangle ^{ - \varsigma-2 }},\;n \in \mathbb{Z}} \right\}.\]
	Here, we again note our prior assumption that $ \left| {{I_n}} \right| \leqslant r\xi_n $ for some $ 0<r<1/2 $ and all $ n \in \mathbb{Z} $.
	Note that $ \sum\nolimits_{n \in \mathbb{Z}} {\left| {{I_n}} \right|{{\left\langle n \right\rangle }^\varsigma }}  \leqslant \sum\nolimits_{n \in \mathbb{Z}} {{{\left\langle n \right\rangle }^{ - 2}}}  <  + \infty  $. Consequently,  our KAM Theorem \ref{FULLT1}  guarantees  the existence of a large family of frequency-preserving almost periodic breathers with small frequency-magnitudes for  \eqref{MACXT}. Finally, the mixed-magnitude type can be achieved by simultaneously incorporating the cases of large and small magnitudes.
	
	This completes the proof of Theorem \ref{MACDINGLI}.
\end{proof}
In fact, from the standpoint of KAM techniques, the coupled oscillator setting we consider can be extended  to the lattice $\mathbb{Z}^{d}$ for any $d>1$ without substantial additional difficulty.  
To keep the exposition focused on the original physical model investigated by Aubry--MacKay, however, we have chosen to restrict our analysis to this case.  
Interested reader can establish the existence of frequency-preserving breathers on $\mathbb{Z}^{d}$ in an entirely analogous manner.

\subsection{Broader applications: More general networks of  coupled oscillators} 
One can observe from the proof of Theorem \ref{MACDINGLI} that the specific form of the coupling potential $W$ is not essential, thanks to the effectiveness of our frequency-preserving KAM Theorem \ref{FULLT1} in the infinite-dimensional setting. Consequently, by employing a similar methodology, we can derive a more comprehensive version of Theorem \ref{MACDINGLI}, as presented below. For the sake of brevity, we omit the proof. \textit{This allows for more intricate connections between oscillators, which are not limited to just two adjacent ones.}

\begin{theorem}\label{TH22}
	Consider the following networks of coupled oscillators
	\begin{equation}\label{MACXIT2}
		\frac{{{{\rm d}^2}{x_n}}}{{{\rm d}{t^2}}} + V'\left( x_n \right) = \sum\limits_{p =  - {m_{1,n}}}^{ - 1} {{\varepsilon _{n,p}}{W_{n,p}^\prime} \left( {{x_{n + p}} - {x_n}} \right)}  - \sum\limits_{p=1}^{{m_{2,n}}} {{\varepsilon _{n,p}}{W_{n,p}^\prime} \left( {{x_{n + p}} - {x_n}} \right)} ,\quad n \in \mathbb{Z},
	\end{equation}
	where $ {\left\{ {{m_{1,n}}} \right\}_{n \in \mathbb{Z}}} $ and $ {\left\{ {{m_{2,n}}} \right\}_{n \in \mathbb{Z}}} $ are positive integer sequences.
	Assuming \ref{con:a1}--\ref{con:a4}, there exists some $ \varepsilon^*=\left(\varepsilon_{n,p}^*\right)_{n,p \in \mathbb{Z}}$ with $ \|\varepsilon^*\|_{ 1}:=\sum\nolimits_{n,p \in \mathbb{Z}} {\left| {{\varepsilon^* _{n,p}}} \right|} >0 $   sufficiently small, such that for all $ 0 \prec \varepsilon=\left(\varepsilon_{n,p}\right)_{n,p \in \mathbb{Z}}\prec\varepsilon^* $,  system \eqref{MACXIT2} admits a large family of frequency-preserving almost periodic breathers with large frequency-magnitudes. Moreover, these frequencies are of Bourgain's Diophantine type as defined in Definition \ref{FULLDio}. If \ref{con:a4} holds for all $ h \in \overline {\mathcal{I}} $, then such frequency-preserving almost periodic breathers  can have  large, small, and mixed frequency-magnitudes.
\end{theorem}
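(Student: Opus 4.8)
The plan is to transcribe the proof of Theorem \ref{MACDINGLI} verbatim wherever possible, the only genuinely new ingredient being the bookkeeping and smallness control for the now possibly long-range, many-body couplings. First I would put \eqref{MACXIT2} in Hamiltonian form: with $y_n=x_n'$ it is generated by $H=\sum_{n\in\mathbb{Z}}\big(\tfrac12 y_n^2+V(x_n)\big)+\sum_{n\in\mathbb{Z}}\sum_{p=1}^{m_{2,n}}\varepsilon_{n,p}W_{n,p}(x_{n+p}-x_n)$, once the bond data $(\varepsilon_{n,p},W_{n,p})$ are assembled consistently per bond (each unordered bond $\{n,n+p\}$ contributing a single potential term, the ``backward'' sum at a site $n$ being the ``forward'' sum of its partners $n+p$), so that $-\partial_{x_n}$ of the coupling part reproduces exactly the right-hand side of \eqref{MACXIT2}; this is the same elementary computation as for \eqref{MACHAM}, merely with more summands. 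The sign in front of the second sum in \eqref{MACXIT2} is precisely what makes this consistent.

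Next I would run the Arnold action--angle reduction exactly as in the proof of Theorem \ref{MACDINGLI}: under (A1)--(A4), using the generating function $S(x,\rho)=\int_{\Gamma^*}y\,{\rm d}x$ and $H_0=\rho^{-1}$, the symplectic map $\psi_0:(\theta_n,\rho_n)\mapsto(x_n,y_n)$ diagonalizes the uncoupled part into $\sum_n H_0(\rho_n)$ and turns the couplings into $\tilde P(\theta,\rho)=\sum_{n,p}\varepsilon_{n,p}W_{n,p}\big(x_{n+p}(\theta_{n+p},\rho_{n+p})-x_n(\theta_n,\rho_n)\big)$. Shifting $\rho=I+\xi$ with $\xi\in\big((H_0^{-1}(\mathcal{I}))^o\big)^{\mathbb{Z}}$ and Taylor-expanding each $H_0(I_n+\xi_n)$ as in \eqref{MACH2} yields the normal form \eqref{MACHA3} with frequency map $\omega(\xi)=(H_0'(\xi_n))_{n\in\mathbb{Z}}$ and diagonal Hessian $A(\xi)=\mathrm{diag}(H_0''(\xi_n))_{n\in\mathbb{Z}}$. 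By (A4) one has $\rho''\neq 0$, so the Lagrange Inversion Theorem on $\mathcal{I}$ (resp.\ $\overline{\mathcal{I}}$ in the small-amplitude regime) gives uniform two-sided bounds $c^{-1}\leqslant|H_0''(\xi_n)|\leqslant c$; hence $A(\xi)$ is boundedly invertible, which furnishes the nondegenerate $\mathscr{Q}$ with the properties required in \eqref{FULLninininini}.

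The one new step is to verify the analytic smallness hypotheses of Theorem \ref{FULLT1} for $\tilde P$ and its $I$-derivatives in the weighted norms $\|\cdot\|_{\sigma}$, $\|\cdot\|_{\sigma,\sigma}$ on a suitable $\mathscr{D}_{\sigma,r}$. Since $W_{n,p}(x_{n+p}-x_n)$ depends on $\theta_{n+p}$, whose thickened-torus analyticity weight grows like $\langle n+p\rangle^\eta$, and since site $n$ now carries $m_{1,n}+m_{2,n}$ (possibly unbounded) coupling terms, I would choose the couplings to decay rapidly in both indices and in the term count, e.g.\ $\varepsilon_{n,p}\sim q\,{\rm e}^{-R(\langle n\rangle+\langle n+p\rangle)}$ with $q\ll 1$ and $R$ large relative to the relevant analyticity radii --- this is exactly the device already used for \eqref{MACHA3} in the proof of Theorem \ref{MACDINGLI}, now additionally summed over $p$. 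Then $\|\varepsilon^*\|_{\ell^1}$ is as small as desired, \eqref{FULL1} and the remaining smallness conditions hold, and fixing $\xi$ so that $\omega(\xi)\in\mathcal{I}^{\mathbb{Z}}$ is of Bourgain's Diophantine type (Definition \ref{FULLDio}; possible since such $\xi$ form a full-measure set), Theorem \ref{FULLT1} produces a frequency-preserving full-dimensional analytic torus whose orbits are the asserted almost periodic breathers. The large-, small-, and mixed-amplitude cases then follow word for word as in Theorem \ref{MACDINGLI} (for small amplitudes one uses (A4) on $\overline{\mathcal{I}}$ together with $\xi_n>\langle n\rangle^{-\varsigma-2}$ to keep enough analyticity radius in $I_n$). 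I expect this last calibration --- balancing the long-range/many-body coupling decay against the $\langle j\rangle^\eta$-growth of the spatial weights and the $\mathcal{O}(|I|^3)$ remainder so that all three norms in the hypotheses of Theorem \ref{FULLT1} are controlled simultaneously --- to be the main obstacle; everything else is a direct transcription of the $m_{1,n}=m_{2,n}=1$ case.
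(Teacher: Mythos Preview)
Your proposal is correct and follows exactly the approach the paper indicates: the paper actually \emph{omits} the proof of Theorem \ref{TH22} entirely, stating only that ``the specific form of the coupling potential $W$ is not essential'' and that ``by employing a similar methodology'' to the proof of Theorem \ref{MACDINGLI} one obtains the result. Your outline---Hamiltonian formulation with the per-bond bookkeeping, action--angle reduction via $H_0=\rho^{-1}$, the diagonal nondegenerate $A(\xi)$ from (A4), calibrating $\varepsilon_{n,p}$ to decay in both indices so that the analytic smallness on $\mathscr{D}_{\sigma,r}$ holds, and then invoking Theorem \ref{FULLT1}---is precisely the transcription the paper has in mind, and your identification of the coupling-decay calibration as the only new ingredient matches the paper's remark that the coupling form is inessential.
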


The conclusion of Theorem \ref{TH22} may seem surprising at first glance, but it is mathematically natural. The coupling structure of \eqref{MACXIT2} allows for the correlation between multiple oscillators, the number of which can tend to infinity asymptotically. However, for each fixed $n$, these couplings are indeed \textit{finite}. This aligns with the infinite-dimensional non-resonance condition and the spatial structure proposed by Bourgain \cite{MR2180074} (see also Section \ref{SEC1}). Therefore, as long as the perturbation is sufficiently small, the perturbed system can preserve most of the dynamics of the unperturbed system.

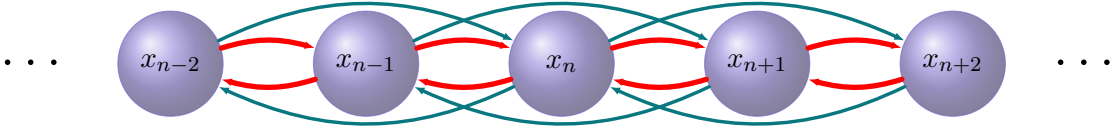
\begin{figure}[H]
	\centering
	\begin{tikzpicture}[
		arr/.style={-{Latex[length=4pt,bend]}, line cap=round}
		]
		
		\pgfmathsetmacro{\dx}{2.6}   
		\pgfmathsetmacro{\wNear}{2.0pt}
		\pgfmathsetmacro{\wMid}{1.3pt}
		\pgfmathsetmacro{\wFar}{0.7pt}
		\pgfmathsetmacro{\bend}{6mm}
		
		\pgfmathsetmacro{\leftB}{-1.3}
		\pgfmathsetmacro{\rightB}{4*\dx+1.3}
		\pgfmathsetmacro{\shift}{-(\leftB+\rightB)/2}  
		\begin{scope}[shift={(\shift,0)}]
			
			\foreach \idx [count=\i from 0] in {n-2,n-1,n,n+1,n+2}{
				\begin{scope}[opacity=0.58]
					\node[ball3Donly] (ball\i) at (\i*\dx,0) {};
				\end{scope}
				\node[text=black] at (\i*\dx,0) {$\boldsymbol{x_{\idx}}$};
			}
			
			\node[scale=1.8, black] at (-1.8,0) {$\cdots$};
			\node[scale=1.8, black] at (4*\dx+1.8,0) {$\cdots$};
			
			\draw[arr,red,line width=\wNear] (ball0) to[bend left=\bend] (ball1);
			\draw[arr,red,line width=\wNear] (ball1) to[bend left=\bend] (ball0);
			\draw[arr,red,line width=\wNear] (ball1) to[bend left=\bend] (ball2);
			\draw[arr,red,line width=\wNear] (ball2) to[bend left=\bend] (ball1);
			\draw[arr,red,line width=\wNear] (ball2) to[bend left=\bend] (ball3);
			\draw[arr,red,line width=\wNear] (ball3) to[bend left=\bend] (ball2);
			\draw[arr,red,line width=\wNear] (ball3) to[bend left=\bend] (ball4);
			\draw[arr,red,line width=\wNear] (ball4) to[bend left=\bend] (ball3);
			
			\draw[arr,myTeal,line width=\wMid] (ball0) to[bend left=\bend+3mm] (ball2);
			\draw[arr,myTeal,line width=\wMid] (ball2) to[bend left=\bend+3mm] (ball0);
			
			\draw[arr,myTeal,line width=\wMid] (ball1) to[bend left=\bend+3mm] (ball3);
			\draw[arr,myTeal,line width=\wMid] (ball3) to[bend left=\bend+3mm] (ball1);
			
			\draw[arr,myTeal,line width=\wMid] (ball2) to[bend left=\bend+3mm] (ball4);
			\draw[arr,myTeal,line width=\wMid] (ball4) to[bend left=\bend+3mm] (ball2);
			
		\end{scope}
	\end{tikzpicture}
	\caption{General coupling in system \eqref{MACXIT2}: coupling occurs exclusively among each consecutive triplet of oscillators.}
	\label{FIG2}
\end{figure}

\begin{figure}[H]
	\centering
	\begin{tikzpicture}[
		arr/.style={-{Latex[length=4pt,bend]}, line cap=round}
		]
		
		\pgfmathsetmacro{\dx}{2.6}   
		\pgfmathsetmacro{\wNear}{2.0pt}
		\pgfmathsetmacro{\wMid}{1.3pt}
		\pgfmathsetmacro{\wFar}{0.7pt}
		\pgfmathsetmacro{\bend}{6mm}
		
		\pgfmathsetmacro{\leftB}{-1.3}
		\pgfmathsetmacro{\rightB}{4*\dx+1.3}
		\pgfmathsetmacro{\shift}{-(\leftB+\rightB)/2}  
		\begin{scope}[shift={(\shift,0)}]
			
			\foreach \idx [count=\i from 0] in {n-2,n-1,n,n+1,n+2}{
				\begin{scope}[opacity=0.58]
					\node[ball3Donly] (ball\i) at (\i*\dx,0) {};
				\end{scope}
				\node[text=black] at (\i*\dx,0) {$\boldsymbol{x_{\idx}}$};
			}
			
			\node[scale=1.8, black] at (-1.8,0) {$\cdots$};
			\node[scale=1.8, black] at (4*\dx+1.8,0) {$\cdots$};
			
			\draw[arr,red,line width=\wNear] (ball0) to[bend left=\bend] (ball1);
			\draw[arr,red,line width=\wNear] (ball1) to[bend left=\bend] (ball0);
			\draw[arr,red,line width=\wNear] (ball1) to[bend left=\bend] (ball2);
			\draw[arr,red,line width=\wNear] (ball2) to[bend left=\bend] (ball1);
			\draw[arr,red,line width=\wNear] (ball2) to[bend left=\bend] (ball3);
			\draw[arr,red,line width=\wNear] (ball3) to[bend left=\bend] (ball2);
			\draw[arr,red,line width=\wNear] (ball3) to[bend left=\bend] (ball4);
			\draw[arr,red,line width=\wNear] (ball4) to[bend left=\bend] (ball3);
			
			\draw[arr,myTeal,line width=\wMid] (ball0) to[bend left=\bend+3mm] (ball2);
			\draw[arr,myTeal,line width=\wMid] (ball2) to[bend left=\bend+3mm] (ball0);
			
			\draw[arr,myTeal,line width=\wMid] (ball1) to[bend left=\bend+3mm] (ball3);
			\draw[arr,myTeal,line width=\wMid] (ball3) to[bend left=\bend+3mm] (ball1);
			
			\draw[arr,myTeal,line width=\wMid] (ball2) to[bend left=\bend+3mm] (ball4);
			\draw[arr,myTeal,line width=\wMid] (ball4) to[bend left=\bend+3mm] (ball2);
			\draw[arr,myOrange,line width=\wFar] (ball0) to[bend left=\bend+6mm] (ball3);
			\draw[arr,myOrange,line width=\wFar] (ball3) to[bend left=\bend+6mm] (ball0);
			
			\draw[arr,myOrange,line width=\wFar] (ball1) to[bend left=\bend+6mm] (ball4);
			\draw[arr,myOrange,line width=\wFar] (ball4) to[bend left=\bend+6mm] (ball1);
		\end{scope}
	\end{tikzpicture}
	\caption{General coupling in system \eqref{MACXIT2}: coupling occurs exclusively among each consecutive quartet of oscillators.}
	\label{FIG3}
\end{figure}
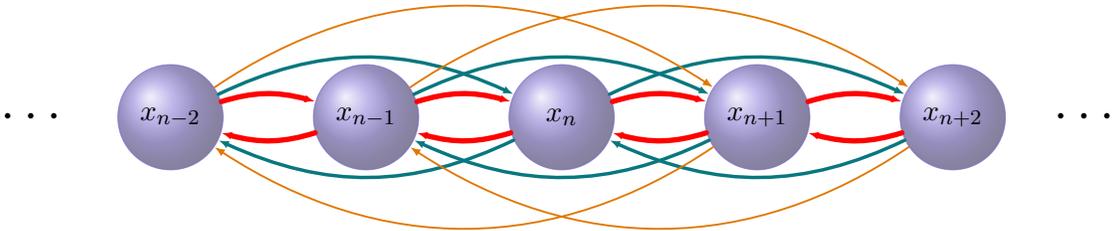
To facilitate an intuitive understanding of Theorem \ref{TH22}, Figures \ref{FIG2} and \ref{FIG3} depict two elementary coupling configurations: nearest-triplet and nearest-quartet interactions, respectively. The coupling strength attenuates with inter-oscillator distance, a decay represented by the progressively decreasing thickness of the arrows. While these configurations constitute strict subclasses of the general coupling prescribed in Theorem \ref{TH22}, they serve as finite prototypes from which the  reader may infer the limiting behavior as the interaction range tends to infinity, thereby elucidating the general result via finite-to-asymptotic infinite extrapolation.

\subsection{Special cases and asymptotic analysis} 
In order to ensure the validity of the assumptions for specific cases,  we need to quantitatively determine  the relevant terms in the Hamiltonian \eqref{MACHA3}, using  asymptotic analysis for $ H_0 (h) $ (or $ \rho (h)$) near $ 0 ^+$. Note that, from \eqref{MACrho}, for $0< \rho\ll1 $, we can write $ \rho(h) $  as 
\[\rho \left( h \right) = 2\sqrt 2 \int_0^{{V^{ - 1}}\left( h \right)} {\sqrt {h - V\left( s \right)} {\rm d}s}. \]
Next, we present two common examples to more clearly illustrate this point to the  reader.

\subsubsection{The case of $ V(x)=x^{2p} $ with $ p \in \mathbb{N}^+ $}
 We first mention that, except for $ p=1 $, the other cases do not satisfy the assumptions in \cite{MR1889993} ($ V'(0)=0 $ and $ V''(0)\ne0 $ in Assumption (V1) there). As a consequence, \cite[Corollary 1.2]{MR1889993} cannot be directly obtained from the KAM theorem there.  However, our analysis does work.
 
 A straightforward calculation shows that
\[\rho \left( h \right)|_{{p = 1}} = 2\sqrt 2 \int_0^{{h^{\frac{1}{{2p}}}}} {\sqrt {h - {s^{2p}}} {\rm d}s}  = \left( {\frac{{\sqrt 2 }}{p}\int_0^1 {{z^{\frac{1}{{2p}} - 1}}{{\left( {1 - z} \right)}^{\frac{3}{2} - 1}}{\rm d}z} } \right){h^{\frac{{1 + p}}{{2p}}}} = \frac{{\sqrt 2 }}{p}\mathrm{B}\left( {\frac{1}{{2p}},\frac{3}{2}} \right){h^{\frac{{1 + p}}{{2p}}}},\]
where $ \mathrm{B}(x,y) $ denotes the standard Beta function. Therefore,
\[{\rho }\left( h \right)|_{{p = 1}} = {\mathcal{O}^\# }\left( h \right),\quad {\rho }\left( h \right)|_{{p = 2}} = {\mathcal{O}^\# }\left( {{h^{\frac{3}{4}}}} \right),\quad  \cdots\cdots\quad , \]
and this leads to 
\[{H_{0}}\left( h \right) |_{{p = 1}}= {\mathcal{O}^\# }\left( h \right),\quad {H_{0}}\left( h \right)|_{{p = 2}} = {\mathcal{O}^\# }\left( {{h^{\frac{3}{4}}}} \right),\quad  \cdots\cdots \]
Here, $\mathcal{O}^\#$ denotes the same order up to a non-zero constant; this holds for all $h\in\mathcal{I}$, not only for $0<h\ll 1$, because both $\rho(h)|_p$ and $H_0(h)|_p$ are simple power functions. In other words, all of our assumptions can hold because the compactness of the interval $ \mathcal{I}\subset \mathbb{R}^+ $ required in Assumption \ref{con:a3} implies that it cannot include $ h = 0 $.  Hence, we obtain frequency-preserving almost periodic breathers for the Aubry--MacKay conjecture. 

 However, for general $ V(x) $, we have to estimate the asymptotic expansions for $ \rho $ and $ H_0 $.

\subsubsection{The case of $ V\left( x \right) = 2{x^2} + {x^4} $}
 In this case, $ V(0)=V'(0)=0$, and $ V''(0)>0 $. Below, we assume that the interval $ \mathcal{I}\subset \mathbb{R}^+ $ in Assumption \ref{con:a3} is sufficiently close to $ h=0 $. A straightforward calculation gives
\begin{align*}
	\rho \left( h \right) &= 2\sqrt 2 \int_0^{{V^{ - 1}}\left( h \right)} {\sqrt {h - V\left( s \right)} {\rm d}s} \\
	& = \frac{{\sqrt 2 }}{2}{h^{\frac{3}{2}}}\int_0^1 {\frac{{\sqrt {1 - z} }}{{\sqrt {\sqrt {hz + 1}  - 1} \sqrt {hz + 1} }}{\rm d}z} \\
	& = \frac{{\sqrt 2 }}{2}{h^{\frac{1}{2}}}\int_0^1 {\frac{{\sqrt {1 - z} }}{z}\left( {1 + \frac{1}{{\sqrt {hz + 1} }}} \right)\sqrt {\sqrt {hz + 1}  - 1} {\rm d}z} .
\end{align*}
With the asymptotic expansion
\[\left( {1 + \frac{1}{{\sqrt {a + 1} }}} \right)\sqrt {\sqrt {a + 1}  - 1}  = \sqrt 2 {a^{\frac{1}{2}}} - \frac{3}{{4\sqrt 2 }}{a^{\frac{3}{2}}} + \frac{{35}}{{64\sqrt 2 }}{a^{\frac{5}{2}}} +  \cdots  + {\mathcal{O}^\# }\left( {{a^{ {N + \frac{1}{2}} }}} \right) +  \cdots ,\quad  a\to 0^+,\]
we obtain the analytic function $ \rho(h) $ on $ \mathcal{I} $ as
\[\rho \left( h \right) = {A_1}h + {A_2}{h^2} +  \cdots  = \sum\limits_{j = 1}^\infty  {{A_j}{h^j}} , \quad A_1,A_2\ne0.\]
By applying  the Lagrange inversion theorem, we obtain the analytic function $ H_0(h) $ on $ \mathcal{I} $ (which might be smaller) as 
\[{H_0}\left( h \right) = {\rho ^{ - 1}}\left( h \right) = \frac{1}{{{A_1}}}h - \frac{{{A_2}}}{{A_1^3}}{h^2} +  \cdots  = \sum\limits_{j = 1}^\infty  {{B_j}{h^j}} ,\quad B_1,B_2\ne0.\]
Therefore, for any parameter $ \xi $ satisfying $ \sum\nolimits_{n \in \mathbb{Z}}  {\left| {{\xi _n}} \right|}  \ll 1$ (e.g., $ <|A_2A_1^{-3}| $), we have
\[\left| {e\left( \xi  \right)} \right| \leqslant \sum\limits_{n \in \mathbb{Z}}  {\left| {{H_0}\left( {{\xi _n}} \right)} \right|}  = \mathcal{O}\left( {\sum\limits_{n \in \mathbb{Z}}  {\left| {{\xi _n}} \right|} } \right) =\mathcal{O}\left( {\left| \xi \right|} \right),\]
\[\left| {\left\langle {\omega \left( \xi  \right),I} \right\rangle } \right| \leqslant \sum\limits_{n \in \mathbb{Z}}  {\left| {{H_0'}\left( {{\xi _n}} \right)} \right|\left| {{I_n}} \right|}  = \mathcal{O}\left( {\sum\limits_{n \in \mathbb{Z}} {\left| {{\xi _n}} \right|\left| {{\xi_n}} \right|} } \right) = \mathcal{O}\left( {\left| \xi \right|} \right),\]
and
\[{H_0''}\left( {{\xi _n}} \right) =  - 2{A_2}A_1^{ - 3} + \mathcal{O}\left( {\left| {{\xi _n}} \right|} \right) =  - 2{A_2}A_1^{ - 3} + o\left( 1 \right).\]
These imply that the KAM Theorem \ref{FULLT1} does work, whenever we choose some $ \xi $ near $ 0 $ such that $ \omega(\xi) $ is Diophantine or weak Diophantine (note that such frequencies admit   full measure). Hence, we obtain frequency-preserving almost periodic breathers for the Aubry--MacKay conjecture. 

\subsection{Some notes on  Assumptions \ref{con:a2}--\ref{con:a4}}

	\begin{enumerate}[label=(N\arabic*), ref=(N\arabic*)]
\item 	The Lagrange inversion theorem gives the power expansion (into $y$) for an inverse $x=f^{-1}(y)$ of an analytic function $f(x)=y$. However, it requires that $f'(0)\ne 0$. In our case, $V'(0)=0$ must hold; otherwise, the closed curve $\Gamma(h)$ in Assumption \ref{con:a3} does not exist. \textit{As a consequence, we cannot directly use the Lagrange inversion theorem to calculate $\rho(h)$ from the integral involving $V(x)$.}

\item The analyticity of $H_0=\rho^{-1}$ in Assumption \ref{con:a4} is essential. Without it, $H_0$ might be nowhere analytic, even though it is $C^\infty$. Note that the analyticity of $V(x)$ in Assumption \ref{con:a1} does not necessarily ensure the analyticity of $H_0$, even if we further assume Assumption \ref{con:a2}. Indeed, we can construct $C^\infty$ counterexamples with $V(x)=x^2+x^4+\widetilde{V}(x)$, where $ \widetilde{V}(x) $ is a term that is  $ C^\infty $  but nowhere analytic. As a consequence, we cannot apply any analytic KAM theorems to obtain almost periodic breathers. 

\item  A stronger but easier-to-verify version of Assumption \ref{con:a4} is:\vspace{2mm}\\
(A4$ ' $) Let $ \rho=\rho(h) $ be the area of the region enclosed by the closed curve $ \Gamma(h) $, i.e., 
\[\rho \left( h \right) = \oint_{\frac{1}{2}y^2 + V\left( x \right) = h} {y{\rm d}x} .\]
We assume that under the potential $ V $, for any $ h \in \overline{\mathcal{I}} $, $ \rho(h) $ is analytic,  $ \rho '\left( h \right) \ne 0 $ and $\rho ''\left( h \right) \ne 0 $.\vspace{2mm}\\
Note that by assuming the analyticity of $ \rho(h) $ on $ \overline{\mathcal{I}} $, we can obtain from the Lagrange inversion theorem the analyticity of $ H_0(h)=\rho^{-1}(h) $ on $ \overline{\mathcal{I}} $ (or on a smaller set). Moreover, the first term and the second term in $ H_0(h) $ (with respect to $ h $) are nondegenerate, which is sufficient for the nondegeneracy in our KAM Theorems \ref{FULLT1} and \ref{FULLT2}.
\end{enumerate}

\section{Comparison with previous literature}\label{SECNEW}
The primary motivations and main contributions of this paper have been thoroughly detailed in the Introduction. To facilitate a deeper understanding, we devote this section to comparing our main results with several pertinent prior works.

\subsection{Comparison with P\"oschel's KAM}
In 1990, P\"oschel \cite{MR1037110} established the existence of full-dimensional invariant tori for infinite-dimensional Hamiltonian systems under very  general non-resonance conditions and spatial structures. The Hamiltonian considered therein, however, lacks the Legendre-type nondegeneracy condition imposed in the present paper; consequently, the frequencies are subject to drift, marking a fundamental distinction. Furthermore, the overly broad nature of such general non-resonance conditions and spatial structures renders their application to concrete physical problems somewhat abstract and intractable.
In contrast, the present paper establishes infinite-dimensional frequency-preserving KAM theorems within a Bourgain-type (and more general) framework. Such results are intrinsically better suited for applications to infinite-dimensional networks (lattice systems). It must be emphasized, however, that this does not preclude the applicability of the results in \cite{MR1037110}; parallel to the analysis in Section \ref{SECAMC}, one can still deduce the existence of a family of almost periodic breathers for infinite-dimensional networks. As elucidated by Yuan \cite{MR1889993}, KAM theorems for infinite-dimensional Hamiltonian systems are capable of achieving this. Nevertheless, in the absence of a Kolmogorov-type KAM theorem concerning frequency preservation, \textit{it remains an open question whether frequency-preserving almost periodic breathers exist,} which serves as a primary motivation for our study. From a technical standpoint, the approach in \cite{MR1037110} diverges substantially from ours. The present paper combines the ideas as well as techniques of Salamon \cite{MR2111297} and Bourgain \cite{MR2180074}, departing from the methodology utilized in \cite{MR1037110}. 

In conclusion, applying the results of \cite{MR1037110} to infinite-dimensional networks yields conclusions fundamentally distinct from ours. From a dynamical systems perspective, \cite{MR1037110} cannot address whether the original breather frequencies persist invariantly under perturbation. Furthermore, the underlying techniques employed in the two works are substantially different.

\subsection{Comparison with Corsi--Gentile--Procesi's KAM}
 Corsi--Gentile--Procesi \cite{MR4781767} recently investigated the existence of full-dimensional tori for an analytic mechanical system
\begin{equation}\label{CGPH}
	H\left( {x,y} \right) = \frac{1}{2}y \cdot y + \varepsilon P\left( x \right)
\end{equation}
based on the tree formalism.  It is worth noting that while both the system considered therein and our Hamiltonian are analytic, our framework is more general. Consequently, in a certain sense, the present paper partially resolves the open problem posed in \cite[Section 3]{MR4781767}, albeit under a slightly different problem setting. As noted in Remark \ref{FULLRe1,1}, our Hamiltonian formulation permits the unperturbed system to be \textit{non-integrable} and allows for a \textit{more general} perturbation; more precisely, both may depend on the angular variable $x$, a feature absent in \cite{MR4781767}. This distinction manifests most clearly in applications to concrete physical models. For instance, when considering the network \eqref{MACXT} in Section \ref{SECAMC}, a reduction procedure yields the Hamiltonian in \eqref{MACHA3}, wherein the perturbation intrinsically depends on the angular variable $x$. As a result, the frequency-preserving KAM theorems established in this paper are applicable, whereas the results from \cite{MR4781767} are not directly applicable.  However, we emphasize that, as stated in \cite[Section 3]{MR4781767}, the motivation for their results primarily stems from the study of Hamiltonian PDEs, wherein formulations such as \eqref{CGPH} arise quite naturally. In contrast, the results of the present paper are designed for investigating \textit{infinite-dimensional networks (lattice systems)}. Furthermore, we place greater emphasis on the existence of \textit{frequency-preserving} infinite-dimensional tori, a viewpoint that is inherently more natural from a dynamical systems perspective. Indeed, the results herein appear not to be suited for Hamiltonian PDEs, given that the corresponding Hamiltonians are usually expanded around an elliptic equilibrium (the zero solution), and hence the size of the nonlinearity and the room for modulation cannot be chosen independently but are of the same order---a characteristic that diverges from the techniques employed in this paper\footnote{Nevertheless, provided that specific PDEs can be suitably reduced to the Hamiltonian form of Section \ref{SECAMC} (for instance, in the presence of additional parameters), our KAM results become applicable. Note that we do not impose any spectral asymptotic conditions.}.

We also mention that \cite{MR4781767} does not use the truncation estimate based on $ |k|_\eta $ as in  \cite[Lemma B.2]{MR4201442} (see also \cite[Lemma 5.7]{arXiv:2306.08211}).  Instead, it uses 
\[|k|_{\star}:=\sum_{j \in \mathbb{Z}} \left|k_j\right|h_j,  \quad  h_j=h_{-j} \in \mathbb{R}_{+} \forall j \in \mathbb{Z},  \quad  h_{j+1} \geqslant h_j \forall j \in \mathbb{Z}_{+} \]
with 
\[\limsup _{j \rightarrow+\infty} \frac{(\log (1+\langle j\rangle))^\sigma}{h_j}<+\infty \; \text { for some } \sigma>2\]
to obtain a small divisor estimate as $ N \to +\infty $ for any $ \mu_1,\mu_2>0 $:
\begin{equation}\label{VECC}
	\sup _{\substack{\nu \in \mathbb{Z}_*^\infty},\; |k|_{\star} \leqslant N} \prod_{j \in \mathbb{Z}}\left(1+\langle j\rangle^{\mu_1}\left|\nu_j\right|^{\mu_2}\right) \leqslant K_1 \exp\left( \frac{K_2 N}{(\log N)^{\sigma-1}}\right)\;\text{for some universal\;} K_1, K_2>0,
\end{equation}
thereby maximally relaxing the spatial structure of the analytic perturbation $ P $ with respect to $ k $. This is of particular interest because the small divisor estimate given in \eqref{VECC} is critical in the KAM iteration process, from the perspective of the Brjuno condition or the weak Diophantine condition (see, for instance, P\"oschel \cite{MR1037110} and the authors \cite{MR4836959}).  Similarly, we also address this issue in Comments \ref{com:c4} and \ref{com:c5}, where we consider the more critical case. Instead of employing truncation estimates like \eqref{VECC}, we directly utilize the form of the control function provided in Definition \ref{weakdio}; however, we believe that the underlying mechanism remains the same.   \vspace{3mm}

\subsection{Comparison with the authors' recent KAM}    
	As pointed out in Comment \ref{com:c1}, the authors  \cite{arXiv:2306.08211} recently established an infinite-dimensional KAM theorem allowing for $C^\infty$ regularity (with a corresponding spatial structure) inspired by P\"oschel's work \cite{PoarXiv}. Our KAM results differ from this work in several fundamental aspects. First,  \cite{arXiv:2306.08211} concerns infinite-dimensional vector fields independent of the action variable $y$, whereas the present paper deals with infinite-dimensional Hamiltonian systems. Second, due to the lack of a nondegeneracy condition, the surviving KAM invariant tori in \cite{arXiv:2306.08211} must exhibit frequency drift. In contrast, our approach ensures exact frequency preservation by virtue of a Legendre-type nondegeneracy condition. Most crucially,  the result in \cite{arXiv:2306.08211} accommodates $C^\infty$ regularity, while our present KAM scheme is strictly confined to the analytic category. Let us elaborate on this distinction. The approach in \cite{arXiv:2306.08211} adapts the strategy of P\"oschel \cite{PoarXiv}, combining KAM coordinate transformations with an analytic smoothing technique to construct an $m$-weighted norm. By utilizing a Bourgain-type (or more general) framework as an intermediary, it deduces the $m$-weighted regularity  \textit{a posteriori}, thereby accommodating non-analytic $C^\infty$ vector fields.  At the crux of this approach, the authors introduced a \textit{balancing sequence} to balance the non-resonance condition against the regularity. In the absence of such a sequence, the associated KAM iteration could not be established. It is also worth noting that non-analytic functions \textit{cannot} be evaluated pointwise on arbitrary infinite-dimensional thickened tori, making the previous approximation scheme indispensable. Consequently, the KAM technique employed in the present paper is fundamentally distinct from the approach therein, and cannot be directly extended to accommodate regularities weaker than analyticity.	The sole connection between \cite{arXiv:2306.08211} and the present paper lies in the discussion on the spatial structure in Section \ref{SEC24}. Nevertheless, rather than being a primary novelty of our KAM results, this simply provides a feasible extension. 
	
\textit{In conclusion, \cite{arXiv:2306.08211} and the present paper have essentially no overlap in terms of the systems studied, motivations, applications, and underlying techniques.}

\section{Proof of Main Result I}\label{SEC4}
\subsection{Proof of Theorem \ref{FULLT1}: KAM via the infinite-dimensional Diophantine non-resonance condition}\label{FULLSEC3}

\subsubsection{Some preliminary lemmas}\label{FUUSECPRE}
Here, we first provide  four basic lemmas without proof as a foundation. Detailed proofs can be found, for instance, in  \cite[Lemmas 2.5, 2.6, 2.7, and 2.11]{MR4201442}. The first three aspects, namely  the Banach algebraic property for the space $ \mathcal{G}\left( {\mathbb{T}_\sigma ^\infty } \right)  $, the definition of the Fourier constant, and the Cauchy's estimate associated with the higher order  derivatives, exhibit similarities to the finite-dimensional case. The final aspect, namely  the homological equation with a Diophantine frequency belonging to $ {\mathcal{D}_{\gamma ,\mu }} $, differs significantly from the finite-dimensional case. This is because the finite-dimensional Diophantine non-resonance condition is characterized by some finite-order polynomial, and consequently, the coefficient of control in the homological equation there also exhibits polynomial characteristics (not the exponential type in Lemma \ref{FULLtdfcyl} for the infinite-dimensional case), as illustrated in \cite[Lemma 2]{MR2111297}.

\begin{lemma}[Banach algebraic property]
	For $ u,v \in \mathcal{G}\left( {\mathbb{T}_\sigma ^\infty } \right) $ with $ \sigma>0 $, we have $ uv \in \mathcal{G}\left( {\mathbb{T}_\sigma ^\infty } \right) $ and $ \left\|uv\right\|_{\sigma } \leqslant \left\|u\right\|_{\sigma }\left\|v\right\|_{\sigma } $.
\end{lemma}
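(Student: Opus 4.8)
The plan is to reduce the statement to the sub-multiplicativity of the weight $\ell\mapsto{\rm e}^{\sigma|\ell|_\eta}$ under convolution, exactly as in the finite-dimensional case, the only extra care being the rearrangement of the resulting double series and the closedness of $\mathbb{Z}_*^\infty$ under addition. First I would write $u(x)=\sum_{\ell\in\mathbb{Z}_*^\infty}\hat u(\ell)\,{\rm e}^{{\rm i}\langle\ell,x\rangle}$ and $v(x)=\sum_{m\in\mathbb{Z}_*^\infty}\hat v(m)\,{\rm e}^{{\rm i}\langle m,x\rangle}$, observe that if $\ell,m\in\mathbb{Z}_*^\infty$ then $\ell+m\in\mathbb{Z}_*^\infty$ (a finite union of finite supports is finite), and set $w_k:=\sum_{\ell+m=k}\hat u(\ell)\hat v(m)$. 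The first point to check is that this convolution is absolutely convergent and that $\sum_k|w_k|{\rm e}^{\sigma|k|_\eta}<+\infty$; once this is established, the series $\sum_k w_k\,{\rm e}^{{\rm i}\langle k,x\rangle}$ converges absolutely and uniformly on $\mathbb{T}_\sigma^\infty$, its sum coincides pointwise with $u(x)v(x)$ (termwise multiplication is legitimate because both factor series converge absolutely), and hence $uv\in\mathcal{G}(\mathbb{T}_\sigma^\infty)$ with $\widehat{uv}(k)=w_k$.

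The key elementary inequality is the triangle inequality for $|\cdot|_\eta$: since $|\ell_j+m_j|\le|\ell_j|+|m_j|$ for every $j\in\mathbb{Z}$, summing against the weights $\langle j\rangle^\eta$ gives $|\ell+m|_\eta\le|\ell|_\eta+|m|_\eta$, so that ${\rm e}^{\sigma|k|_\eta}\le{\rm e}^{\sigma|\ell|_\eta}{\rm e}^{\sigma|m|_\eta}$ whenever $k=\ell+m$. I would then estimate
\[
\|uv\|_\sigma=\sum_{k\in\mathbb{Z}_*^\infty}|w_k|\,{\rm e}^{\sigma|k|_\eta}
\le\sum_{k\in\mathbb{Z}_*^\infty}\sum_{\ell+m=k}|\hat u(\ell)|\,|\hat v(m)|\,{\rm e}^{\sigma|\ell|_\eta}\,{\rm e}^{\sigma|m|_\eta},
\]
and, since all terms are nonnegative, apply Tonelli's theorem to reindex the double sum by $(\ell,m)\in\mathbb{Z}_*^\infty\times\mathbb{Z}_*^\infty$ and factor it as
\[
\Big(\sum_{\ell\in\mathbb{Z}_*^\infty}|\hat u(\ell)|\,{\rm e}^{\sigma|\ell|_\eta}\Big)\Big(\sum_{m\in\mathbb{Z}_*^\infty}|\hat v(m)|\,{\rm e}^{\sigma|m|_\eta}\Big)=\|u\|_\sigma\|v\|_\sigma.
\]
This simultaneously proves the finiteness claimed above and the desired bound $\|uv\|_\sigma\le\|u\|_\sigma\|v\|_\sigma$.

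The only genuine point requiring attention — rather than a real obstacle — is the legitimacy of the rearrangements: identifying the Fourier coefficient of the pointwise product $uv$ with the convolution $w_k$, and interchanging the order of summation. Both are handled by absolute convergence (Tonelli for the nonnegative double series, and the absolute convergence of the factor series for the termwise product identity), so no delicate analysis beyond $\|u\|_\sigma,\|v\|_\sigma<+\infty$ is needed. I expect the whole argument to be short and entirely routine.
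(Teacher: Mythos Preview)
Your proposal is correct and is exactly the standard argument. The paper itself does not give a proof of this lemma: it lists it among four ``basic lemmas without proofs'' and refers the reader to Montalto and Procesi \cite{MR4201442} (their Lemma~2.5), where precisely the convolution/triangle-inequality/Tonelli computation you outline is carried out.
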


\begin{lemma}[Fourier constant]
	For $ u \in \mathcal{G}\left( {\mathbb{T}_\sigma ^\infty } \right) $, we have
	\[\widehat u_0=\int_{{\mathbb{T}^\infty }} {u ( x  ){\rm d}x} : = \mathop {\lim }\limits_{N \to  + \infty } \frac{1}{{{{\left( {2\pi } \right)}^{2N+1}}}}\int_{{\mathbb{T}^{2N+1}}} {u ( x  ){\rm d}{x_{-N}} \cdots {\rm d}{x_N}}  .\]
	Moreover, for any $ 0 \ne \ell \in \mathbb{Z}_ * ^\infty  $, we have
	\[\widehat u_\ell = \int_{{\mathbb{T}^\infty }} u (x){{\rm e}^{ - {\rm{i}}\left\langle {\ell ,x} \right\rangle}}{\rm d}x = \mathop {\lim }\limits_{N \to +\infty } \frac{1}{{{{(2\pi )}^{2N + 1}}}}\int_{{\mathbb{T}^{2N + 1}}} u (x){{\rm e}^{ - {\rm{i}}\left\langle {\ell ,x} \right\rangle}} {\rm d}{x_{ - N}}\cdots  {\rm d}{x_N}.\]
\end{lemma}

\begin{lemma}[Cauchy's estimate]\label{FULLcauchy}
	Let $ u \in \mathcal{G}\left( {\mathbb{T}_{\sigma  + \rho }^\infty } \right) $ with $ \sigma ,\rho  > 0 $. Then for any $ k \in \mathbb{N} $, the $ k $-th derivative $ D_x^ku $ satisfies the following estimate with $ c\left( k \right) > 0 $  depending only on the order $ k $:
	\[\left\|D_x^ku\right\|_{\sigma } \leqslant c\left( k \right){\rho ^{ - k}}\left\|u\right\|_{{\sigma  + \rho }}.\]
\end{lemma}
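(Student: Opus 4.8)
The plan is to reduce the statement to the classical scalar Cauchy estimate by working mode-by-mode in the Fourier expansion of $u$, the only genuinely new ingredient being the bookkeeping of the weights $\langle j\rangle^\eta$ that enter $|\cdot|_\eta$.

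\emph{Step 1 (reduction to Fourier modes).} Write $u(x)=\sum_{\ell\in\mathbb{Z}_*^\infty}\hat u(\ell)\,{\rm e}^{{\rm i}\langle\ell,x\rangle}$, so that $\|u\|_{\sigma+\rho}=\sum_{\ell\in\mathbb{Z}_*^\infty}|\hat u(\ell)|\,{\rm e}^{(\sigma+\rho)|\ell|_\eta}<+\infty$. Differentiating term by term, the $(j_1,\dots,j_k)$-component of the $k$-th differential $D_x^k u$ has Fourier series $\sum_\ell {\rm i}^k\,\ell_{j_1}\cdots\ell_{j_k}\,\hat u(\ell)\,{\rm e}^{{\rm i}\langle\ell,x\rangle}$. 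Measuring $D_x^k u$ by the natural extension to $k$-tensors of the norms \eqref{FUUJIEXI} and \eqref{MATIR} (supremum over the coordinate multi-index $(j_1,\dots,j_k)$ of the $\|\cdot\|_\sigma$-norm of each component), we get
\[\|D_x^k u\|_\sigma=\sup_{j_1,\dots,j_k\in\mathbb{Z}}\sum_{\ell\in\mathbb{Z}_*^\infty}|\ell_{j_1}\cdots\ell_{j_k}|\,|\hat u(\ell)|\,{\rm e}^{\sigma|\ell|_\eta}.\]
Since $\eta\geq2$ forces $\langle j\rangle^\eta\geq1$, and $|\ell|_\eta=\sum_{j\in\mathbb{Z}}\langle j\rangle^\eta|\ell_j|\geq\langle j_m\rangle^\eta|\ell_{j_m}|\geq|\ell_{j_m}|$ for each $m$, one has $|\ell_{j_1}\cdots\ell_{j_k}|\leq|\ell|_\eta^k$ for every choice of indices; hence $\|D_x^k u\|_\sigma\leq\sum_{\ell\in\mathbb{Z}_*^\infty}|\ell|_\eta^k\,|\hat u(\ell)|\,{\rm e}^{\sigma|\ell|_\eta}$.

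\emph{Step 2 (elementary inequality) and conclusion.} Next I would invoke the elementary bound $t^k{\rm e}^{-\rho t}\leq (k/{\rm e})^k\rho^{-k}$ valid for all $t\geq0$ and $\rho>0$ (the function $s\mapsto s^k{\rm e}^{-s}$ attains its maximum $k^k{\rm e}^{-k}$ at $s=k$), applied with $t=|\ell|_\eta$: this yields $|\ell|_\eta^k\,{\rm e}^{\sigma|\ell|_\eta}\leq c(k)\rho^{-k}\,{\rm e}^{(\sigma+\rho)|\ell|_\eta}$ with $c(k):=(k/{\rm e})^k$ for $k\geq1$ and $c(0):=1$. Substituting into the bound from Step 1 and summing over $\ell\in\mathbb{Z}_*^\infty$ gives
\[\|D_x^k u\|_\sigma\leq c(k)\rho^{-k}\sum_{\ell\in\mathbb{Z}_*^\infty}|\hat u(\ell)|\,{\rm e}^{(\sigma+\rho)|\ell|_\eta}=c(k)\rho^{-k}\|u\|_{\sigma+\rho},\]
which is the asserted estimate; the absolute convergence of this final series also retroactively justifies the term-by-term differentiation used in Step 1.

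The argument is essentially routine and mirrors the finite-dimensional Cauchy estimate. The only place that requires a moment's care is the conventions in Step 1: one must fix that $D_x^k u$ is normed as a $k$-linear object by the supremum over coordinate multi-indices, which is precisely what makes each factor $|\ell_{j_m}|$ absorbable into $|\ell|_\eta$ (using $\langle j\rangle^\eta\geq1$). Beyond this bookkeeping no real obstacle arises, which is why the lemma is quoted as a foundational fact; a detailed proof in the same spirit may be found in Montalto and Procesi \cite{MR4201442}.
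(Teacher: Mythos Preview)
Your proof is correct. The paper does not actually give its own proof of this lemma; it explicitly lists it among ``four basic lemmas without proofs'' and refers the reader to Montalto and Procesi \cite{MR4201442} (Lemma~2.7 there), so there is nothing to compare against beyond noting that your Fourier-mode argument with the elementary bound $t^k{\rm e}^{-\rho t}\leq (k/{\rm e})^k\rho^{-k}$ is precisely the standard approach one finds in that reference.
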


\begin{lemma}[Diophantine homological equation]\label{FULLtdfcyl}
	Let $ \mu,\eta,\sigma ,\rho  > 0 $ and a Diophantine frequency $ \omega \in {\mathcal{D}_{\gamma ,\mu }} $ be given. Then there exists a constant $ \tau  = \tau \left( {\eta ,\mu } \right) > 0 $ such that for every $ g \in \mathcal{G}_0\left( {\mathbb{T}_{\sigma  + \rho }^\infty } \right) $,
	the homological equation \eqref{FULLtdfc}	admits a unique solution $ f \in \mathcal{G}_0\left( {\mathbb{T}_\sigma ^\infty } \right) $, and
	\begin{equation}\notag
		\left\|f\right\|_{\sigma } \leqslant \exp \left( {\frac{\tau }{{{\rho ^{\frac{1}{\eta} }}}}\log \left( {\frac{\tau }{\rho }} \right)} \right)\left\|g\right\|_{{\sigma  + \rho }}.
	\end{equation}
\end{lemma}

Next, we establish a  basic lemma for Taylor's estimates, which will be used in the subsequent proof.

\begin{lemma}[Taylor's estimates]\label{FULLtaylor}
	Let $ \sigma,M>0 $ be given. Then for a complex function $ \mathcal{H}\left( {x,y} \right) $ on $ {\mathscr{D}_{\sigma,\sigma}} $ with $ \left\|{\mathcal{H}_{yy}}\left( {x,y} \right)\right\|_{{\sigma ,\sigma}} \leqslant M $,  we have
	\begin{align}
		\label{FULLH1}	&\left\|\mathcal{H}\left( {x,y} \right) - \mathcal{H}\left( {x,0} \right) -  \left\langle {{\mathcal{H}_y}\left( {x,0} \right)},y \right\rangle   \right\|_{\sigma } \leqslant M\left\|y\right\|_\sigma ^2,\\
		\label{FULLH2}&\left\|{\mathcal{H}_y}\left( {x,y} \right) - {\mathcal{H}_y}\left( {x,0} \right)\right\|_{\sigma } \leqslant M\left\|y\right\|_{\sigma },\\
		\label{FULLH3}&\left\|{\mathcal{H}_y}\left( {x,y} \right) - {\mathcal{H}_y}\left( {x,0} \right) -\left\langle {\mathcal{H}_{yy}}\left( {x,0} \right),y\right\rangle\right\|_{\sigma } \leqslant \frac{{M\left\|y\right\|_\sigma ^2}}{{\sigma - \left\|y\right\|_{\sigma }}}.
	\end{align}
\end{lemma}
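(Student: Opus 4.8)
The plan is to obtain all three bounds from the Hadamard (integral) form of the Taylor remainder taken along the segment $\{ty:t\in[0,1]\}$, then to pass the weighted norm inside the integral and invoke the Banach algebra property (Lemma~\ref{daishuxingzhi}). Concretely, for analytic $\mathcal{H}$ on $\mathscr{D}_{\sigma,\sigma}$ and any admissible $y$ --- in particular for $y=y(x)$ a small analytic function, in which case the segment $ty$ remains inside $\mathscr{D}_{\sigma,\sigma}$ for $t\in[0,1]$ --- one has the identities
\begin{align*}
	\mathcal{H}(x,y) - \mathcal{H}(x,0) - \left\langle {\mathcal{H}_y(x,0)},y\right\rangle &= \int_0^1 (1-t)\left\langle \left\langle {\mathcal{H}_{yy}(x,ty)},y\right\rangle,y\right\rangle{\rm d}t,\\
	\mathcal{H}_y(x,y) - \mathcal{H}_y(x,0) &= \int_0^1 \left\langle {\mathcal{H}_{yy}(x,ty)},y\right\rangle{\rm d}t,\\
	\mathcal{H}_y(x,y) - \mathcal{H}_y(x,0) - \left\langle {\mathcal{H}_{yy}(x,0)},y\right\rangle &= \int_0^1 \left\langle {\mathcal{H}_{yy}(x,ty) - \mathcal{H}_{yy}(x,0)},y\right\rangle{\rm d}t,
\end{align*}
each right-hand side being a Bochner integral of a continuous curve in $\mathcal{G}(\mathbb{T}_\sigma^\infty)$ (or, for matrix-valued pieces, in the space normed by \eqref{MATIR}), so that $\|\cdot\|_\sigma$ commutes with $\int_0^1$. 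Convergence of these integrals in the relevant Banach space is ensured by the absolute summability already encoded in the norms, as remarked after \eqref{MATIR}.

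For \eqref{FULLH2}, Lemma~\ref{daishuxingzhi} together with the matrix-norm convention \eqref{MATIR} gives $\|\langle \mathcal{H}_{yy}(x,ty),y\rangle\|_\sigma \leqslant \|\mathcal{H}_{yy}(x,ty)\|_\sigma\|y\|_\sigma \leqslant M\|y\|_\sigma$ for each $t\in[0,1]$; integrating yields the bound. The same estimate with an extra factor $\|y\|_\sigma$, combined with $\int_0^1(1-t)\,{\rm d}t = 1/2 \leqslant 1$, proves \eqref{FULLH1}.

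The only additional ingredient for \eqref{FULLH3} is an estimate for $\mathcal{H}_{yy}(x,ty) - \mathcal{H}_{yy}(x,0)$. I would regard $t\mapsto \mathcal{H}_{yy}(x,ty)$ as an analytic curve on the disc $\{|t|<r/\|y\|_\sigma\}$ --- where $r$ denotes the radius of the $y$-domain ($r=\sigma$ in the present normalization) --- with $\|\cdot\|_\sigma$-norm at most $M$, and apply a standard contour estimate (equivalently Lemma~\ref{FULLcauchy} applied in the $y$ variable) to obtain
\[\big\|\mathcal{H}_{yy}(x,ty) - \mathcal{H}_{yy}(x,0)\big\|_\sigma \leqslant \frac{M\,t\|y\|_\sigma}{r - t\|y\|_\sigma} \leqslant \frac{M\|y\|_\sigma}{r - \|y\|_\sigma},\qquad t\in[0,1].\]
Multiplying by $\|y\|_\sigma$ and integrating over $t$ gives \eqref{FULLH3}. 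For $y=y(x)$ nothing changes: if $\|y\|_\sigma<r$, then Lemma~\ref{daishuxingzhi} places all the compositions above in $\mathcal{G}(\mathbb{T}_\sigma^\infty)$ and keeps $\{ty(x)\}$ inside $\mathscr{D}_{\sigma,r}$, so the identities and estimates transfer verbatim. The argument is routine; the only mild obstacle is bookkeeping --- checking that the pairings $\langle\cdot,\cdot\rangle$ are the intended bilinear/linear contractions on the weighted spaces and that each Bochner integral genuinely converges there --- which, as flagged after \eqref{MATIR}, reduces to the summability already built into $\|\cdot\|_\sigma$.
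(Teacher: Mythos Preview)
Your proposal is correct and follows essentially the same route as the paper: integral Taylor remainders for \eqref{FULLH1} and \eqref{FULLH2}, and a Cauchy contour estimate in the complex $t$-variable (on the circle $|\lambda|=r/\|y\|_\sigma$) for \eqref{FULLH3}. The only cosmetic differences are that the paper writes the second-order remainder as a double integral $\int_0^1\!\int_0^t\cdots\,{\rm d}s\,{\rm d}t$ rather than your $\int_0^1(1-t)\cdots\,{\rm d}t$, and applies the contour identity directly to $\langle\mathcal{H}_{yy}(x,\lambda ty),y\rangle$ instead of first bounding $\mathcal{H}_{yy}(x,ty)-\mathcal{H}_{yy}(x,0)$ and then pairing with $y$.
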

\begin{proof}
	According to Taylor's formula, we have
	\begin{align*}
		\left\|\mathcal{H}\left( {x,y} \right) - \mathcal{H}\left( {x,0} \right) -  \left\langle{{\mathcal{H}_y}\left( {x,0} \right),y}\right\rangle  \right\|_{\sigma } &= \left\|\int_0^1 {\int_0^t { \left\langle {\left\langle {{\mathcal{H}_{yy}}\left( {x,sy} \right),y} \right\rangle },{y } \right\rangle   {\rm d}s{\rm d}t} } \right\|_{\sigma }\\
		& \leqslant \int_0^1 {\int_0^t {\left\| \left\langle {\left\langle {{\mathcal{H}_{yy}}\left( {x,sy} \right),y} \right\rangle },{y } \right\rangle  \right\|_{\sigma }{\rm d}s{\rm d}t} }  \\
		&\leqslant \int_0^1 {\int_0^t {\mathop {\sup }\limits_{i\in \mathbb{Z}} \left\|\sum\limits_{j \in \mathbb{Z}} {\mathcal{H}_{yy}^{\left( {i,j} \right)}\left( {x,sy} \right){y_i}{y_j}} \right\|_{\sigma }{\rm d}s{\rm d}t} } \\
		& \leqslant M\mathop {\sup }\limits_{j\in \mathbb{Z}} \left\|{y_j}\right\|_{\sigma } \cdot \mathop {\sup }\limits_{i \in \mathbb{Z}} \left\|{y_i}\right\|_{\sigma } \\
		&\leqslant M\left\|y\right\|_\sigma ^2,
	\end{align*}
	which proves \eqref{FULLH1}. Regarding \eqref{FULLH2}, we have
	\begin{align*}
		\left\|{\mathcal{H}_y}\left( {x,y} \right) - {\mathcal{H}_y}\left( {x,0} \right)\right\|_{\sigma } &= \left\|\int_0^1 {\left\langle {{\mathcal{H}_{yy}}\left( {x,ty} \right),y} \right\rangle {\rm d}t} \right\|_{\sigma } \leqslant \int_0^1 {\left\|\left\langle {{\mathcal{H}_{yy}}\left( {x,ty} \right),y} \right\rangle \right\|_{\sigma }{\rm d}t} \\
		&= \int_0^1 {\mathop {\sup }\limits_{i,j \in \mathbb{Z}} \left\|\mathcal{H}_{yy}^{\left( {i,j} \right)}\left( {x,ty} \right){y_j}\right\|_{\sigma }{\rm d}t}  \leqslant M\mathop {\sup }\limits_{j \in \mathbb{Z}} \left\|{y_j}\right\|_{\sigma } \leqslant M \left\|y\right\|_{\sigma }.
	\end{align*}
	Finally, by considering the curve  $ \Gamma : = \left\{ {\lambda  \in \mathbb{C}: \left| \lambda  \right| = \sigma\left\|y\right\|_{\sigma }^{-1} > 1} \right\} $, we obtain that
	\begin{align*}
		\left\|{\mathcal{H}_y}\left( {x,y} \right) - {\mathcal{H}_y}\left( {x,0} \right) -\left\langle {\mathcal{H}_{yy}}\left( {x,0} \right),y\right\rangle\right\|_{\sigma } &= \left\|\int_0^1 {\left( {\left\langle{\mathcal{H}_{yy}}\left( {x,ty} \right),y\right\rangle - \left\langle{\mathcal{H}_{yy}}\left( {x,0} \right),y\right\rangle} \right){\rm d}t} \right\|_{\sigma }\\
		& = \left\|\int_0^1 {\frac{1}{{2\pi {\rm i}}}\int_\Gamma  {\frac{1}{{\lambda \left( {\lambda  - 1} \right)}}} \left\langle{\mathcal{H}_{yy}}\left( {x,\lambda ty} \right),y\right\rangle{\rm d}\lambda {\rm d}t} \right\|_{\sigma } \\
		& \leqslant \int_0^1 {\frac{1}{{2\pi }} \cdot \left| \Gamma  \right| \cdot \frac{1}{{\left| \lambda  \right|\left( {\left| \lambda  \right| - 1} \right)}} \cdot \mathop {\sup }\limits_{i,j\in \mathbb{Z}} \left\|\mathcal{H}_{yy}^{\left( {i,j} \right)}\left( {x,\lambda ty} \right){y_j}\right\|_{\sigma }{\rm d}t} \\
		& \leqslant \frac{1}{{2\pi }} \cdot 2\pi \frac{\sigma}{{\left\|{y}\right\|_{\sigma }}} \cdot \frac{1}{{\left| \lambda  \right|\left( {\left| \lambda  \right| - 1} \right)}} \cdot M\mathop {\sup }\limits_{j\in \mathbb{Z}} \left\|{y_j}\right\|_{\sigma }\\
		& \leqslant   \frac{{M\left\|y\right\|_\sigma ^2}}{{\sigma - \left\|y\right\|_{\sigma }}},
	\end{align*}
	which proves \eqref{FULLH3}.
\end{proof}

\subsubsection{Proof of Theorem \ref{FULLT1}}
We now prove Theorem \ref{FULLT1}. Without loss of generality, we will introduce some universal constants that are independent of the iterative process and may vary in different contexts. In particular, if necessary, we will indicate which variables they depend on. We also emphasize that after balancing spatial structure, regularity, and the non-resonance condition, KAM analysis in the infinite-dimensional context is indeed similar to that in the finite-dimensional context. For further insights on this aspect, refer to P\"oschel \cite{MR1037110}, the authors \cite{arXiv:2306.08211}, and the references therein.

For any fixed $ {2^{ - \eta }} < q < 2^{-1}\left({2^{ - \eta }}+1\right) $, let us define the contraction sequence as
\[{\sigma _\nu }: =  \frac{1}{2}\sigma \left( {1 + {q^\nu }} \right),\quad \nu  \in \mathbb{N}.\]
It is straightforward to verify that $ {\sigma _0} = \sigma,{\sigma _\infty } = \sigma /2 $, and 
\[{\sigma _\nu } - {\sigma _{\nu  + 1}} = \frac{1}{2}\sigma \left( {1 - q } \right){q^\nu },\quad \nu  \in \mathbb{N}.\]
As we will see later, the appropriately selected contraction sequence ensures that our KAM iteration is super-exponentially convergent.

To obtain the desired frequency-preserving, symplectic and analytic transformations in the KAM iteration, we construct, for $x \in \mathbb{T}_{\sigma_\nu}^\infty$ in the $\nu$-th step, the following partial differential equations using the generating function method of Kolmogorov \cite{MR0068687} and Salamon \cite{MR2111297}:
\begin{align}
	\label{FULLgfe1}\omega  \cdot {\partial _x}a\left( x \right) &= \int_{{\mathbb{T}^\infty }} {{{\mathscr{H}}^\nu }\left( {\xi ,0} \right){\rm d}\xi }  - {{\mathscr{H}}^\nu }\left( {x,0} \right),\\
\label{FULLgfe2}	\omega  &= \int_{{\mathbb{T}^\infty }} {\left( {{\mathscr{H}}_y^\nu \left( {\xi ,0} \right) + \left\langle{\mathscr{H}}_{yy}^\nu \left( {\xi ,0} \right), {\alpha  + {a_x}\left( \xi  \right)}\right\rangle} \right){\rm d}\xi } ,\\
	\omega  \cdot {\partial _x}b\left( x \right) &= \omega  - {\mathscr{H}}_y^\nu \left( {x,0} \right) - \left\langle{\mathscr{H}}_{yy}^\nu \left( {x,0} \right), {\alpha  + {a_x}\left( x \right)} \right\rangle.\notag
\end{align}
Here, $ a(x)=a_{\nu}(x) $ and $ b(x)=b_\nu (x) $ are 1-periodic in all variables, and $ \alpha =\alpha_\nu$ is an infinite-dimensional  constant vector. To shorten notation, we drop the subscript $ \nu $. It should be  emphasized that, with the nondegeneracy condition  in \eqref{FULLninininini}, the second equation  \eqref{FULLgfe2} (and also the equivalent third one) plays an essential role in preserving the prescribed frequency of the unperturbed Hamiltonian system. Without this, the frequency may drift in each step of the KAM iteration.

Let us define  the error $ {\varepsilon _\nu } > 0 $ in the $ \nu $-th step of the KAM iteration to be the smallest number such that
\begin{align}
	\label{FULLerr1}&\left\|{{\mathscr{H}}^\nu }\left( {x,0} \right) - \int_{{\mathbb{T}^\infty }} {{{\mathscr{H}}^\nu }\left( {\xi ,0} \right){\rm d}\xi } \right\|_{{{\sigma _\nu }}} \leqslant {\varepsilon _\nu },\\
	\label{FULLerr2}&\left\|{\mathscr{H}}_y^\nu \left( {x,0} \right) - \omega \right\|_{{{\sigma _\nu }}} \leqslant \exp \left( {  \frac{\tau }{{{{\left( {{\sigma _\nu } - {\sigma _{\nu  + 1}}} \right)}^{\frac{1}{\eta} }}}}\log \left( {\frac{\tau }{{{\sigma _\nu } - {\sigma _{\nu  + 1}}}}} \right)} \right){\varepsilon _\nu },
\end{align}
and assume that
\begin{equation}\notag
	\left\|{\mathscr{H}}_{yy}^\nu \left( {x,y} \right)\right\|_{{{\sigma _\nu },{\sigma _\nu }}} \leqslant {M_\nu } \leqslant M,\quad \left\|{\left( {\int_{{\mathbb{T}^\infty }} {{{\mathscr{H}}_{yy}}\left( {x,0} \right){\rm d}x} } \right)^{ - 1}}\right\|_{{\mathscr{I}_\sigma^\infty}} \leqslant {M_\nu } \leqslant M
\end{equation}
for a universal constant $ M>0 $. It will be demonstrated that, by virtue of the smallness of the KAM error, the aforementioned reversibility can be preserved throughout the KAM process. 

 Define $ {\rho _j}: = \left( {\left( {8 - j} \right){\sigma _\nu } + j{\sigma _{\nu  + 1}}} \right)/8 $ for $ 0 \leqslant j \leqslant 4 $. Then, it follows that $ {\rho _0} = {\sigma _\nu }$ and ${\rho _4} = \left( {{\sigma _\nu } + {\sigma _{\nu  + 1}}} \right)/2 $. We aim to  establish the following estimates for $ a(x) $ and $ b(x) $ derived from the generating function method.

\begin{lemma}There hold
\begin{align*}
	\left\|a\left( x \right)\right\|_{{{ \frac{ {{\sigma _\nu } + {\sigma _{\nu  + 1}}}}{2} }}} &\leqslant \exp \left( {\frac{\tau }{{{{\left( {{\left( {{\sigma _\nu } + {\sigma _{\nu  + 1}}} \right)/2  } - {\sigma _{\nu  + 1}}} \right)}^{\frac{1}{\eta} }}}}\log \left( {\frac{\tau }{{{\sigma _\nu } - {\sigma _{\nu  + 1}}}}} \right)} \right){\varepsilon _\nu },\\
	\left\|{a_x}\right\|_{{{\frac{ {{\sigma _\nu } + {\sigma _{\nu  + 1}}}}{2} }}} &\leqslant c\exp \left( {\frac{\tau }{{{{\left( {{\sigma _\nu } - {\sigma _{\nu  + 1}}} \right)}^{\frac{1}{\eta} }}}}\log \left( {\frac{\tau }{{{\sigma _\nu } - {\sigma _{\nu  + 1}}}}} \right)} \right){\varepsilon _\nu },\\
	\left\|\alpha  + {a_x}\left( x \right)\right\|_{{{\frac{ {{\sigma _\nu } + {\sigma _{\nu  + 1}}}}{2} }}} &\leqslant cM\exp \left( {\frac{\tau }{{{{\left( {{\sigma _\nu } - {\sigma _{\nu  + 1}}} \right)}^{\frac{1}{\eta} }}}}\log \left( {\frac{\tau }{{{\sigma _\nu } - {\sigma _{\nu  + 1}}}}} \right)} \right){\varepsilon _\nu },\\
	\left\|{b_x}\left( x \right)\right\|_{{{\frac{ {{\sigma _\nu } + {\sigma _{\nu  + 1}}}}{2} }}} &\leqslant cM\exp \left( {\frac{\tau }{{{{\left( {{\sigma _\nu } - {\sigma _{\nu  + 1}}} \right)}^{\frac{1}{\eta} }}}}\log \left( {\frac{\tau }{{{\sigma _\nu } - {\sigma _{\nu  + 1}}}}} \right)} \right){\varepsilon _\nu },
\end{align*}
where $ c = c\left( 1 \right) > 0 $ is the constant given in Lemma \ref{FULLcauchy}, and we may regard it as a universal constant.
\end{lemma}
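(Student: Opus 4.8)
The plan is to extract the four bounds one after another from the three generating-function equations in the system \eqref{FULLgfe1}--\eqref{FULLgfe2} (the first two lines being labelled, the third unlabelled), using at each stage the Diophantine homological-equation estimate (Lemma \ref{FULLtdfcyl}), Cauchy's estimate (Lemma \ref{FULLcauchy}), the Banach algebra property (Lemma \ref{daishuxingzhi}), the iteration hypotheses \eqref{FULLerr1}, \eqref{FULLerr2}, and the nondegeneracy \eqref{FULLninininini}. The interpolating radii $\rho_0>\rho_1>\rho_2>\rho_3>\rho_4$, whose consecutive gaps all equal $(\sigma_\nu-\sigma_{\nu+1})/8$, serve to give room to pass from a solution of a homological equation to its $x$-derivative: each such passage costs one gap and introduces a polynomial loss $\rho^{-1}$ through Lemma \ref{FULLcauchy}, which — together with the harmless replacement of the various widths $(\sigma_\nu-\sigma_{\nu+1})/8$ by $(\sigma_\nu-\sigma_{\nu+1})$ — is absorbed into the exponential factor $\exp(\tau\, w^{-1/\eta}\log(\tau/w))$, $w:=\sigma_\nu-\sigma_{\nu+1}$, by enlarging the universal constant $\tau$, since $w^{-k}=o\big(\exp(c\,w^{-1/\eta})\big)$ as $w\to0^+$ for every fixed $k$.

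First I would treat \eqref{FULLgfe1}. Its right-hand side $\int_{\mathbb{T}^\infty}\mathscr{H}^\nu(\xi,0)\,{\rm d}\xi-\mathscr{H}^\nu(x,0)$ has vanishing Fourier constant, hence lies in $\mathcal{G}_0(\mathbb{T}_{\sigma_\nu}^\infty)$, and by \eqref{FULLerr1} its norm is at most $\varepsilon_\nu$; Lemma \ref{FULLtdfcyl} then gives the unique solution $a\in\mathcal{G}_0$ analytic on $\mathbb{T}_{\rho_1}^\infty$ with the exponential factor of \eqref{FULLdiotdfc}, which on restriction to $\mathbb{T}_{\rho_4}^\infty=\mathbb{T}_{(\sigma_\nu+\sigma_{\nu+1})/2}^\infty$ (enlarging $\tau$ to absorb the width ratio) yields the first displayed bound. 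Cauchy's estimate (Lemma \ref{FULLcauchy}) on the gap $\rho_1\to\rho_2$ then bounds $a_x$ on $\mathbb{T}_{\rho_2}^\infty\supset\mathbb{T}_{\rho_4}^\infty$, giving the second displayed bound once the factor $(\rho_1-\rho_2)^{-1}$ is absorbed as above.

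Next I would solve for the constant vector $\alpha$ from \eqref{FULLgfe2}. Since $\alpha$ is independent of $x$, integrating in $\xi$ gives $\big\langle\int_{\mathbb{T}^\infty}\mathscr{H}_{yy}^\nu(\xi,0)\,{\rm d}\xi,\ \alpha\big\rangle=\big(\omega-\int_{\mathbb{T}^\infty}\mathscr{H}_y^\nu(\xi,0)\,{\rm d}\xi\big)-\int_{\mathbb{T}^\infty}\langle\mathscr{H}_{yy}^\nu(\xi,0),a_x(\xi)\rangle\,{\rm d}\xi$; inverting the average matrix, whose inverse has operator norm $\le M$ by \eqref{FULLninininini}, and bounding the right-hand side by \eqref{FULLerr2} for the first term (the norm of a Fourier constant being dominated by the norm of the function) and by $\|\mathscr{H}_{yy}^\nu(\cdot,0)\|\,\|a_x\|\le M\|a_x\|$ (Lemma \ref{daishuxingzhi} together with \eqref{FULLninininini}) for the second, yields $\|\alpha\|\le cM\exp(\tau\, w^{-1/\eta}\log(\tau/w))\varepsilon_\nu$; combined with the $a_x$ bound this gives the third displayed estimate for $\alpha+a_x$. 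Finally, the right-hand side of the third generating-function equation, $\omega-\mathscr{H}_y^\nu(x,0)-\langle\mathscr{H}_{yy}^\nu(x,0),\alpha+a_x(x)\rangle$, has vanishing Fourier constant \emph{precisely} because $\alpha$ was chosen so as to satisfy \eqref{FULLgfe2}; it therefore lies in $\mathcal{G}_0$, analytic on $\mathbb{T}_{\rho_2}^\infty$, with norm controlled by \eqref{FULLerr2}, \eqref{FULLninininini}, Lemma \ref{daishuxingzhi} and the bound just obtained for $\alpha+a_x$, and one more application of Lemma \ref{FULLtdfcyl} on $\rho_2\to\rho_3$ followed by Lemma \ref{FULLcauchy} on $\rho_3\to\rho_4$ delivers $b$ and then $b_x$ with the last displayed bound.

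The main obstacle — and the reason \eqref{FULLgfe2} is built into the scheme — is the solvability of the third equation: a homological equation $\omega\cdot\partial_x b=(\cdot)$ has a solution in $\mathcal{G}_0$ only when its right-hand side has zero average, and it is exactly \eqref{FULLgfe2} that forces this cancellation, so verifying it is the one genuinely structural point; everything else is bookkeeping of exponential and $M$-dependent constants along the chain $\rho_0,\dots,\rho_4$. A secondary technical care is that the quantity $M_\nu$ controlling $\|\mathscr{H}_{yy}^\nu\|$ and the inverse of its average must be propagated through these manipulations without degradation, so that the single universal constant $M$ can be used uniformly in $\nu$.
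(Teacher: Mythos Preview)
Your proposal is correct and follows essentially the same route as the paper: you use the interpolating radii $\rho_0,\dots,\rho_4$, apply Lemma~\ref{FULLtdfcyl} to \eqref{FULLgfe1} for $a$, Cauchy on $\rho_1\to\rho_2$ for $a_x$, invert the averaged Hessian via \eqref{FULLninininini} to extract $\alpha$ from \eqref{FULLgfe2}, then apply Lemma~\ref{FULLtdfcyl} on $\rho_2\to\rho_3$ and Cauchy on $\rho_3\to\rho_4$ for $b$ and $b_x$, absorbing all polynomial losses into the enlarged $\tau$. Your explicit identification of \eqref{FULLgfe2} as precisely the zero-average compatibility condition for the third equation is the key structural point, and the paper handles it the same way.
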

\begin{proof}

By \eqref{FULLgfe1} and Lemma \ref{FULLtdfcyl}, we have
\begin{align}
	\left\|a\left( x \right)\right\|_{{{\rho _1}}} &\leqslant \exp \left( {\frac{\tau }{{{{\left( {{\rho _0} - {\rho _1}} \right)}^{\frac{1}{\eta} }}}}\log \left( {\frac{\tau }{{{\rho _0} - {\rho _1}}}} \right)} \right)\left\|{{\mathscr{H}}^\nu }\left( {x,0} \right) - \int_{{\mathbb{T}^\infty }} {{{\mathscr{H}}^\nu }\left( {\xi ,0} \right){\rm d}\xi } \right\|_{{{\rho _0}}}\notag \\
	&\leqslant \exp \left( {\frac{{{8^{\frac{1}{\eta} }}\tau }}{{{{\left( {{\sigma _\nu } - {\sigma _{\nu  + 1}}} \right)}^{\frac{1}{\eta} }}}}\log \left( {\frac{{8\tau }}{{{\sigma _\nu } - {\sigma _{\nu  + 1}}}}} \right)} \right){\varepsilon _\nu }\notag \\
	\label{FULLa}& \leqslant \exp \left( {\frac{\tau }{{{{\left( {{\sigma _\nu } - {\sigma _{\nu  + 1}}} \right)}^{\frac{1}{\eta} }}}}\log \left( {\frac{\tau }{{{\sigma _\nu } - {\sigma _{\nu  + 1}}}}} \right)} \right){\varepsilon _\nu },
\end{align}
provided that $ \tau  = \tau \left( {\eta ,\mu } \right) > 0 $ is a universal constant. Therefore, with Lemma \ref{FULLcauchy} and \eqref{FULLa}, we obtain
\begin{align}
	\left\|{a_x}\right\|_{{{\rho _2}}} &\leqslant \frac{c}{{{\rho _1} - {\rho _2}}}\left\|a\right\|_{{{\rho _1}}}\notag \\
	& \leqslant \frac{8c}{{{\sigma _\nu } - {\sigma _{\nu  + 1}}}}\exp \left( {\frac{\tau }{{{{\left( {{\sigma _\nu } - {\sigma _{\nu  + 1}}} \right)}^{\frac{1}{\eta} }}}}\log \left( {\frac{\tau }{{{\sigma _\nu } - {\sigma _{\nu  + 1}}}}} \right)} \right){\varepsilon _\nu }\notag \\
	& = c\exp \left( {\frac{\tau }{{{{\left( {{\sigma _\nu } - {\sigma _{\nu  + 1}}} \right)}^{\frac{1}{\eta} }}}}\log \left( {\frac{\tau }{{{\sigma _\nu } - {\sigma _{\nu  + 1}}}}} \right) + \log \left( {\frac{8}{{{\sigma _\nu } - {\sigma _{\nu  + 1}}}}} \right)} \right){\varepsilon _\nu }\notag \\
	\label{FULLAX1}& \leqslant c\exp \left( {\frac{\tau }{{{{\left( {{\sigma _\nu } - {\sigma _{\nu  + 1}}} \right)}^{\frac{1}{\eta} }}}}\log \left( {\frac{\tau }{{{\sigma _\nu } - {\sigma _{\nu  + 1}}}}} \right)} \right){\varepsilon _\nu }.
\end{align}
By \eqref{FULLgfe2}, we have the following relation
\[\left\langle\int_{{\mathbb{T}^\infty }} {{\mathscr{H}}_{yy}^\nu \left( {\xi ,0} \right) {\rm d}\xi },\alpha\right\rangle  = \int_{{\mathbb{T}^\infty }} {\left( {\omega  - {\mathscr{H}}_y^\nu \left( {\xi ,0} \right)} \right){\rm d}\xi }  + \int_{{\mathbb{T}^\infty }} {\left\langle{\mathscr{H}}_{yy}^\nu \left( {\xi ,0} \right),{a_x}\left( \xi  \right)\right\rangle{\rm d}\xi } .\]
Then, we arrive at
\begin{align}
	\left\|\alpha \right\|_{{{\rho _2}}} &\leqslant M\left( {\left\|\omega  - {\mathscr{H}}_y^\nu \left( {x ,0} \right)\right\|_{{{\rho _2}}} + M\left\|{a_x}\left( x \right)\right\|_{{{\rho _2}}}} \right)\notag \\
	\label{FULLalpha}& \leqslant cM\exp \left( {\frac{\tau }{{{{\left( {{\sigma _\nu } - {\sigma _{\nu  + 1}}} \right)}^{\frac{1}{\eta}}}}}\log \left( {\frac{\tau }{{{\sigma _\nu } - {\sigma _{\nu  + 1}}}}} \right)} \right){\varepsilon _\nu },
\end{align}
and therefore
\begin{equation}\label{FULLalpha+ax}
	\left\|\alpha  + {a_x}\left( x \right)\right\|_{{{\rho _2}}} \leqslant \left\|\alpha \right\|_{{{\rho _2}}} + \left\|{a_x}\left( x \right)\right\|_{{{\rho _2}}} \leqslant cM\exp \left( {\frac{\tau }{{{{\left( {{\sigma _\nu } - {\sigma _{\nu  + 1}}} \right)}^{\frac{1}{\eta} }}}}\log \left( {\frac{\tau }{{{\sigma _\nu } - {\sigma _{\nu  + 1}}}}} \right)} \right){\varepsilon _\nu }.
\end{equation}
Further, combining \eqref{FULLalpha}, \eqref{FULLalpha+ax} and Lemma \ref{FULLtdfcyl}, we have
\begin{align}
	\left\|b\left( x \right)\right\|_{{{\rho _3}}} &\leqslant \exp \left( {\frac{\tau }{{{{\left( {{\rho _2} - {\rho _3}} \right)}^{\frac{1}{\eta} }}}}\log \left( {\frac{\tau }{{{\rho _2} - {\rho _3}}}} \right)} \right)\left\|\omega  - {\mathscr{H}}_y^\nu \left( {x,0} \right) - \left\langle{\mathscr{H}}_{yy}^\nu \left( {x,0} \right), {\alpha  + {a_x}\left( x \right)} \right\rangle\right\|_{{{\rho _2}}}\notag \\
	& \leqslant \exp \left( {\frac{\tau }{{{{\left( {{\sigma _\nu } - {\sigma _{\nu  + 1}}} \right)}^{\frac{1}{\eta} }}}}\log \left( {\frac{\tau }{{{\sigma _\nu } - {\sigma _{\nu  + 1}}}}} \right)} \right)\left( {\left\|\omega  - {\mathscr{H}}_y^\nu \left( {x,0} \right)\right\|_{{{\rho _2}}} + \left\|\left\langle{\mathscr{H}}_{yy}^\nu \left( {x,0} \right), {\alpha  + {a_x}\left( x \right)} \right\rangle\right\|_{{{\rho _2}}}} \right)\notag \\
	&\leqslant \exp \left( {\frac{\tau }{{{{\left( {{\sigma _\nu } - {\sigma _{\nu  + 1}}} \right)}^{\frac{1}{\eta} }}}}\log \left( {\frac{\tau }{{{\sigma _\nu } - {\sigma _{\nu  + 1}}}}} \right)} \right) \cdot cM\exp \left( {\frac{\tau }{{{{\left( {{\sigma _\nu } - {\sigma _{\nu  + 1}}} \right)}^{\frac{1}{\eta} }}}}\log \left( {\frac{\tau }{{{\sigma _\nu } - {\sigma _{\nu  + 1}}}}} \right)} \right){\varepsilon _\nu }\notag \\
	\label{FULLb}& \leqslant cM\exp \left( {\frac{\tau }{{{{\left( {{\sigma _\nu } - {\sigma _{\nu  + 1}}} \right)}^{\frac{1}{\eta} }}}}\log \left( {\frac{\tau }{{{\sigma _\nu } - {\sigma _{\nu  + 1}}}}} \right)} \right){\varepsilon _\nu }.
\end{align}
Finally, by \eqref{FULLb} and Lemma \ref{FULLcauchy}, we have
\begin{align}
	\left\|{b_x}\left( x \right)\right\|_{{{\rho _4}}} &\leqslant \frac{c}{ {{\rho _3} - {\rho _4}} }\left\|b\left( x \right)\right\|_{{{\rho _3}}}\notag \\
	& \leqslant \frac{{8c}}{{{\sigma _\nu } - {\sigma _{\nu  + 1}}}} \cdot cM\exp \left( {\frac{\tau }{{{{\left( {{\sigma _\nu } - {\sigma _{\nu  + 1}}} \right)}^{\frac{1}{\eta} }}}}\log \left( {\frac{\tau }{{{\sigma _\nu } - {\sigma _{\nu  + 1}}}}} \right)} \right){\varepsilon _\nu }\notag \\
	\label{FULLbx}& \leqslant cM\exp \left( {\frac{\tau }{{{{\left( {{\sigma _\nu } - {\sigma _{\nu  + 1}}} \right)}^{\frac{1}{\eta} }}}}\log \left( {\frac{\tau }{{{\sigma _\nu } - {\sigma _{\nu  + 1}}}}} \right)} \right){\varepsilon _\nu }.
\end{align}
This proves the lemma.
\end{proof}

 We have constructed functions  by taking advantage of generating functions in the form
\[{{\mathscr{U}}^\nu }\left( x \right) = \left\langle {\alpha ,x} \right\rangle  + a\left( x \right),\quad {{\mathscr{V}}^\nu }\left( x \right) = x + b\left( x \right).\]
Next, we define the symplectic and analytic transformation as 
\[z = {\psi ^\nu }\left( \zeta  \right),\quad z = \left( {x,y} \right),\quad \zeta  = \left( {\xi ,\kappa } \right) \Leftrightarrow \xi  = x + b\left( x \right),\quad y = \alpha  + {a_x}\left( x \right) + \kappa  + \left\langle{b_x}{\left( x \right)},\kappa\right\rangle .\]

\begin{lemma}\label{LM37}
The mapping $ z = {\psi ^\nu }\left( \zeta  \right) $ is well-defined, maps $ \left( {\xi ,\kappa } \right) \in {\mathscr{D}_{{\sigma _{\nu  + 1}},{\sigma _{\nu  + 1}}}} $ into $ \left( {x,y} \right) \in {\mathscr{D}_{\left( {{\sigma _{\nu  + 1}} + {\sigma _\nu }} \right)/2,\left( {{\sigma _{\nu  + 1}} + {\sigma _\nu }} \right)/2}} $, 
and satisfies the following estimates for $ \left( {\xi ,\kappa } \right) \in {\mathscr{D}_{{\sigma _{\nu  + 1}},{\sigma _{\nu  + 1}}}} $:
\begin{align}
	\left\|{\psi ^\nu }\left( \zeta  \right) - {\rm id} \right\|_{{{\sigma _{\nu  + 1}},{\sigma _{\nu  + 1}}}} &\leqslant {{\rm e}^{ - {2^\nu }K{{\sigma ^{ - \frac{2}{\eta} }}}}},\notag \\
	\left\|\psi _\zeta ^\nu \left( \zeta  \right) - {\rm Id}\right\|_{{{\sigma _{\nu  + 1}},{\sigma _{\nu  + 1}}}} &\leqslant {{\rm e}^{ - {2^\nu }K{{\sigma ^{ - \frac{2}{\eta} }}}}},\notag
\end{align}
where $ K>0 $ is a universal constant independent of $ \sigma>0 $.
\end{lemma}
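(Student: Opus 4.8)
The plan is to realize $z=\psi^\nu(\zeta)$ as the composition of two explicit operations: inverting the near-identity angular map $\mathscr{V}^\nu(x)=x+b(x)$ to obtain $x=x(\xi)$, and then substituting into the action component $y=\alpha+a_x(x)+\kappa+\langle b_x(x),\kappa\rangle$; each operation is then controlled quantitatively by the bounds on $a,a_x,\alpha,\alpha+a_x,b,b_x$ furnished by the preceding lemma. Write $L_\nu:=\frac{\tau}{(\sigma_\nu-\sigma_{\nu+1})^{1/\eta}}\log\big(\frac{\tau}{\sigma_\nu-\sigma_{\nu+1}}\big)$ and recall $\sigma_\nu-\sigma_{\nu+1}=\tfrac12\sigma(1-q)q^\nu$; thus each of the quantities above, measured on $\mathbb{T}^\infty_{(\sigma_\nu+\sigma_{\nu+1})/2}$, is $\le cM\exp(L_\nu)\varepsilon_\nu$, which by the inductive smallness of the KAM error recorded in \eqref{FULLerr1}--\eqref{FULLerr2} is super-exponentially small; in particular we may assume it is $<\tfrac14(\sigma_\nu-\sigma_{\nu+1})$ and $<\tfrac1{10}$.

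For well-definedness and the domain inclusion, fix $\xi\in\mathbb{T}^\infty_{\sigma_{\nu+1}}$ and solve $x=\xi-b(x)$ by the contraction mapping principle on $\mathbb{T}^\infty_{(\sigma_\nu+\sigma_{\nu+1})/2}$: the map $x\mapsto\xi-b(x)$ preserves this set since $|\operatorname{Im}(\xi-b(x))_j|\le\sigma_{\nu+1}\langle j\rangle^\eta+\|b\|\le\tfrac12(\sigma_\nu+\sigma_{\nu+1})\langle j\rangle^\eta$, and it is a contraction because its differential is $-b_x(x)$ with $\|b_x\|<\tfrac12$. The unique analytic fixed point $x=x(\xi)=(\mathscr{V}^\nu)^{-1}(\xi)$ satisfies $\|x(\xi)-\xi\|\le\|b\|$. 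Substituting, $y(x(\xi),\kappa)-\kappa=\alpha+a_x(x(\xi))+\langle b_x(x(\xi)),\kappa\rangle$, so from $\|\kappa\|^{*}_{\varsigma}<\sigma_{\nu+1}<1$ we obtain $\|y-\kappa\|^{*}_{\varsigma}\le\|\alpha\|+\|a_x\|+\|b_x\|<\tfrac12(\sigma_\nu-\sigma_{\nu+1})$, hence $\|y\|^{*}_{\varsigma}<(\sigma_\nu+\sigma_{\nu+1})/2$. Therefore $\psi^\nu$ is well-defined and maps $\mathscr{D}_{\sigma_{\nu+1},\sigma_{\nu+1}}$ into $\mathscr{D}_{(\sigma_\nu+\sigma_{\nu+1})/2,(\sigma_\nu+\sigma_{\nu+1})/2}$; and since $\psi^\nu(\zeta)-\mathrm{id}=\big(x(\xi)-\xi,\,y(x(\xi),\kappa)-\kappa\big)$, this already gives $\|\psi^\nu(\zeta)-\mathrm{id}\|_{\sigma_{\nu+1},\sigma_{\nu+1}}\le\|b\|+\|\alpha\|+\|a_x\|+\|b_x\|\le CcM\exp(L_\nu)\varepsilon_\nu$.

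For the Jacobian one differentiates the defining relations, $\partial x/\partial\xi=(\mathrm{Id}+b_x(x))^{-1}$, $\partial x/\partial\kappa=0$, $\partial y/\partial\xi=\big(a_{xx}(x)+\langle b_{xx}(x),\kappa\rangle\big)(\mathrm{Id}+b_x(x))^{-1}$, $\partial y/\partial\kappa=\mathrm{Id}+b_x(x)$, expands $(\mathrm{Id}+b_x)^{-1}$ in a Neumann series (legitimate as $\|b_x\|<\tfrac12$), and bounds $a_{xx},b_{xx}$ from $a,b$ by Cauchy's estimate (Lemma \ref{FULLcauchy}) after contracting the strip by $\sim(\sigma_\nu-\sigma_{\nu+1})$; this yields $\|\psi_\zeta^\nu(\zeta)-\mathrm{Id}\|_{\sigma_{\nu+1},\sigma_{\nu+1}}\le CcM(\sigma_\nu-\sigma_{\nu+1})^{-1}\exp(L_\nu)\varepsilon_\nu$, the extra $(\sigma_\nu-\sigma_{\nu+1})^{-1}$ being absorbed into $\exp(L_\nu)$ by enlarging $\tau$. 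It then remains to pass from these bounds to the stated form $\mathrm{e}^{-2^\nu K\sigma^{-2/\eta}}$. Here the choice of the contraction ratio is decisive: since $2^{-\eta}<q$ we have $q^{-1/\eta}<2$, so $(\sigma_\nu-\sigma_{\nu+1})^{-1/\eta}=(\tfrac12\sigma(1-q))^{-1/\eta}q^{-\nu/\eta}\le C_\sigma 2^{\beta\nu}$ with $\beta:=\log_2(q^{-1/\eta})<1$, while $\log\frac{\tau}{\sigma_\nu-\sigma_{\nu+1}}\le C_\sigma(\nu+1)$; hence $L_\nu\le C_\sigma 2^{\beta\nu}(\nu+1)=o(2^\nu)$, and the same is true of $\log\big((\sigma_\nu-\sigma_{\nu+1})^{-1}\big)$ and of $\log(CcM)$. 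On the other hand the generating-function scheme is Newtonian, so $\varepsilon_\nu$ contracts doubly exponentially, $\log(1/\varepsilon_\nu)\ge 2^\nu c_0$ with $c_0\gtrsim K\sigma^{-2/\eta}$, which is precisely what the iterative bookkeeping behind \eqref{FULLerr1}--\eqref{FULLerr2} delivers, with the base case $\nu=0$ coming from \eqref{FULL1}--\eqref{FULL2}. Combining, $\log\big(CcM(\sigma_\nu-\sigma_{\nu+1})^{-1}\exp(L_\nu)\varepsilon_\nu\big)\le o(2^\nu)-2^\nu c_0\le-2^\nu K\sigma^{-2/\eta}$ for a suitable universal $K$ (possibly smaller than the one in Theorem \ref{FULLT1}), which proves both estimates.

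The inverse-function and composition steps are routine; the genuine difficulty is the balancing in the last paragraph. One must pick $q$ strictly between $2^{-\eta}$ and $\tfrac12(2^{-\eta}+1)$ so that, on the one hand, the unavoidable infinite-dimensional Diophantine divisor losses $\exp(L_\nu)$ — which grow like $\exp(\mathrm{const}\cdot q^{-\nu/\eta})$ and are therefore only of sub-$2^\nu$ size — are overwhelmed by the $\exp(-\mathrm{const}\cdot 2^\nu)$ decay of the KAM error, and on the other hand the widths $\sigma_\nu-\sigma_{\nu+1}\sim\sigma q^\nu$ do not shrink so fast that $(\sigma_\nu-\sigma_{\nu+1})^{-1/\eta}$ overtakes $2^\nu$; what makes the estimates for $a,a_x,\alpha,b,b_x$ simultaneously admissible is that the frequency-correction equation \eqref{FULLgfe2}, read off via the nondegeneracy \eqref{FULLninininini}, keeps $\alpha$ under the same bound as the generating-function data.
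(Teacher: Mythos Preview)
Your overall strategy---invert $\mathscr{V}^\nu$ by contraction, substitute into the action component, differentiate for the Jacobian, then balance the Diophantine loss $\exp(L_\nu)$ against the Newtonian decay $\varepsilon_\nu$---matches the paper's, and your balancing argument in the last two paragraphs is essentially the content of Lemma~\ref{LM39}, which the paper indeed invokes forward to complete the proof of Lemma~\ref{LM37}.

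There is, however, a genuine gap in your treatment of the action component. When you write $\|y-\kappa\|^*_\varsigma\le\|\alpha\|+\|a_x\|+\|b_x\|$, the right-hand side uses the analytic norms $\|\cdot\|_\sigma$ from the preceding lemma, which (for vector-valued functions) control only $\sup_{j}\|(\cdot)_j\|_\sigma$. The left-hand side, by contrast, is the weighted $\ell^1$ norm $\|y\|^*_\varsigma=\sum_{j\in\mathbb{Z}}|y_j|\langle j\rangle^\varsigma$ that defines $\mathscr{D}_r$. Passing from the first to the second is exactly the infinite-dimensional difficulty here: a uniform bound on the components does not give summability with polynomial weights. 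The paper closes this gap with an explicit Fourier argument (the block around \eqref{FULLGAI1}--\eqref{FULLGAI2}): writing $(a_x)_j$ as a Fourier sum and using the key inequality $|k|_\eta\ge\langle j\rangle^\eta|k_j|$ (so that any $k$ contributing to the $j$th partial derivative already carries a factor $\langle j\rangle^\eta$ in the exponential weight), one extracts $|(a_x)_j|\le\langle j\rangle^{-\eta}\exp(L_\nu)\varepsilon_\nu$ for $x\in\mathbb{T}^\infty_{\sigma_\nu}$; summing against $\langle j\rangle^\varsigma$ then converges because $\eta\ge\varsigma+2$ (with a higher-derivative variant when $\varsigma$ is large). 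Without this componentwise decay in $j$, you have not shown that $y$ lands in $\mathscr{D}_{(\sigma_\nu+\sigma_{\nu+1})/2}$, so the domain inclusion for $\psi^\nu$ is unproved.

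A minor remark: for $\|\psi^\nu_\zeta-\mathrm{Id}\|$ the paper simply applies Cauchy's estimate (Lemma~\ref{FULLcauchy}) to $\psi^\nu-\mathrm{id}$ after shrinking the domain, rather than computing the four blocks of the Jacobian explicitly as you do; both routes work, but the paper's is shorter and avoids introducing $a_{xx},b_{xx}$.
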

\begin{proof}
	Postponing the proof of the estimate for $\varepsilon_\nu$ to Lemma \ref{LM39}\footnote{This does not affect the essence of the matter. However, it allows the reader to more clearly see the nature of our KAM technique.},  we obtain that
\begin{align}
	\left\|x - \xi \right\|_{{{\sigma _\nu }}} &= \left\|b\left( x \right)\right\|_{{{\sigma _\nu }}} \leqslant cM\exp \left( {\frac{\tau }{{{{\left( {{\sigma _\nu } - {\sigma _{\nu  + 1}}} \right)}^{\frac{1}{\eta} }}}}\log \left( {\frac{\tau }{{{\sigma _\nu } - {\sigma _{\nu  + 1}}}}} \right)} \right){\varepsilon _\nu }\notag \\
	&\leqslant cM\exp \left( {\frac{{{2^{\frac{1}{\eta} }}\tau }}{{{{\left( \sigma{\left( {1 - q} \right){q^\nu }} \right)}^{\frac{1}{\eta} }}}}\log \left( {\frac{{2\tau }}{{ \sigma \left( {1 - q} \right){q^\nu }}}} \right)} \right) \cdot {{\rm e}^{ - {2^\nu }K{{\sigma ^{ - \frac{2}{\eta} }}}}}\notag \\
	\label{FULLx-xi}& \leqslant {{\rm e}^{ - {2^\nu }K{{\sigma ^{ - \frac{2}{\eta} }}} + {\varrho ^\nu }K{{\sigma ^{ - \frac{2}{\eta} }}}}}  \leqslant {\rm e}^{ - {2^\nu }K{{\sigma ^{ - \frac{2}{\eta} }}}} \leqslant \frac{{{\sigma _\nu } - {\sigma _{\nu  + 1}}}}{8},
\end{align}
where $ 0<\varrho<2 $ is an appropriate constant due to our choice of $ q $, i.e., $ q>2^{-1/\eta} $, and
\begin{align*}
	\left\|{b_x}\left( x \right)\right\|_{{{\sigma _\nu }}} &\leqslant cM\exp \left( {\frac{\tau }{{{{\left( {{\sigma _\nu } - {\sigma _{\nu  + 1}}} \right)}^{\frac{1}{\eta} }}}}\log \left( {\frac{\tau }{{{\sigma _\nu } - {\sigma _{\nu  + 1}}}}} \right)} \right){\varepsilon _\nu } \\
	&\leqslant {{\rm e}^{ - {2^\nu }K{{\sigma ^{ - \frac{2}{\eta} }}}}} \leqslant \frac{{{\sigma _\nu } - {\sigma _{\nu  + 1}}}}{8}.
\end{align*}
Therefore, we get
\[\left\|x - \xi \right\|_{{\frac{{{\sigma _{\nu  + 1}} + {\sigma _\nu }} }{2}}},\left\|{b_x}\left( x \right)\right\|_{{\frac{{{\sigma _{\nu  + 1}} + {\sigma _\nu }} }{2}}} \leqslant {{\rm e}^{ - {2^\nu }K{{\sigma ^{ - \frac{2}{\eta} }}}}} \leqslant \frac{{{\sigma _\nu } - {\sigma _{\nu  + 1}}}}{8}.\]
Now, let 
$ \left( {\xi ,\kappa } \right) \in {\mathscr{D}_{\left( {3{\sigma _{\nu  + 1}} + {\sigma _\nu }} \right)/4,\left( {3{\sigma _{\nu  + 1}} + {\sigma _\nu }} \right)/4}} $, and let $ x \in \mathbb{T}_{\left( {{\sigma _{\nu  + 1}} + {\sigma _\nu }} \right)/2}^\infty  $ be the unique vector such that $ x + b\left( x \right) = \xi  $, and define
\begin{equation}\label{INFIY}
	y: = \alpha  + {a_x}\left( x \right) + \kappa  + \left\langle{b_x}{\left( x \right)},\kappa\right\rangle .
\end{equation}
Using infinite-dimensional Fourier analysis, we can prove that $ {\left\| y \right\|^ *_\varsigma } = \sum\nolimits_{j \in \mathbb{Z}} {\left| {{y_j}} \right|{{\left\langle j \right\rangle }^{\varsigma}}}   $ is small with respect to $ \nu $. Recalling \eqref{INFIY}, it suffices to show the smallness of $ a $ (namely the smallness of $ a_j $ with respect to $ j $ and $ \nu $). Note that 
\[a\left( x \right) = {a_\nu }\left( x \right) = {\left( {\omega  \cdot {\partial _x}} \right)^{ - 1}}\left( {\int_{{\mathbb{T}^\infty }} {{\mathscr{H}^\nu }\left( {\xi ,0} \right){\rm{d}}\xi }  - {\mathscr{H}^\nu }\left( {x,0} \right)} \right).\]
Here, $ {\left( {\omega  \cdot {\partial _x}} \right)^{ - 1}} $ is the familiar linear differential operator giving rise to small divisors. Then 
\[{\left\| a \right\|_{{\sigma _\nu }}} = \sum\limits_{0 \ne k \in \mathbb{Z}_ * ^\infty } {\frac{{\left| {\widehat {\mathscr{H}}_k^\nu \left( 0 \right)} \right|}}{{\left| {\left\langle {k,\omega } \right\rangle } \right|}}{{\rm{e}}^{{\sigma _\nu }{{\left| k \right|}_\eta }}}} \]
admits the estimate in \eqref{FULLa}. Similarly, $ a_x $ admits a Fourier expansion form with the estimate in \eqref{FULLAX1}. Below, let us assume $ \eta \geqslant \varsigma+2 $ without loss of generality. Now, with $ {\left| k \right|_\eta } = \sum\nolimits_{\ell  \in \mathbb{Z}} {{{\left\langle \ell  \right\rangle }^\eta }\left| {{k_\ell }} \right|}  \geqslant {\left\langle j \right\rangle ^\eta }\left| {{k_j}} \right| $ for all $ j \in \mathbb{Z} $, we obtain for $ x \in \mathbb{T}_{{\sigma _\nu }}^\infty  $ that
\begin{align}
	\left| {{{\left( {{a_x}} \right)}_j}} \right| &\leqslant \sum\limits_{0 \ne k \in \mathbb{Z}_ * ^\infty } {\frac{{\left| {\widehat {\mathscr{H}}_k^\nu \left( 0 \right)} \right|}}{{\left| {\left\langle {k,\omega } \right\rangle } \right|}}\left| {{k_j}} \right|\left| {{\mathrm{e}^{\mathrm{i}\left\langle {k,x} \right\rangle }}} \right|} \notag \\
	& \leqslant \frac{1}{{{{\left\langle j \right\rangle }^\eta }}}\sum\limits_{0 \ne k \in \mathbb{Z}_ * ^\infty } {\frac{{\left| {\widehat {\mathscr{H}}_k^\nu \left( 0 \right)} \right|}}{{\left| {\left\langle {k,\omega } \right\rangle } \right|}}{{\left\langle j \right\rangle }^\eta }\left| {{k_j}} \right|\left| {\exp \left( {\sum\limits_{s \in \mathbb{Z}} {{|k_s|}\left| {\operatorname{Im} {x_s}} \right|} } \right)} \right|}\notag \\
	& \leqslant \frac{1}{{{{\left\langle j \right\rangle }^\eta }}}\sum\limits_{0 \ne k \in \mathbb{Z}_ * ^\infty } {\frac{{\left| {\widehat {\mathscr{H}}_k^\nu \left( 0 \right)} \right|}}{{\left| {\left\langle {k,\omega } \right\rangle } \right|}}{{\left| k \right|}_\eta }\left| {\exp \left( {\sigma \sum\limits_{s \in \mathbb{Z}} {{{\left\langle s \right\rangle }^\eta }{|k_s|}} } \right)} \right|} \notag\\
	& = \frac{1}{{{{\left\langle j \right\rangle }^\eta }}}\sum\limits_{0 \ne k \in \mathbb{Z}_ * ^\infty } {\frac{{\left| {\widehat {\mathscr{H}}_k^\nu \left( 0 \right)} \right|}}{{\left| {\left\langle {k,\omega } \right\rangle } \right|}}{{\left| k \right|}_\eta }{{\rm{e}}^{\sigma {{\left| k \right|}_\eta }}}} \notag\\
	\label{FULLGAI1}& \leqslant \frac{1}{{{{\left\langle j \right\rangle }^\eta }}}\exp \left( {\frac{\tau }{{{{\left( {{\sigma _\nu } - {\sigma _{\nu  + 1}}} \right)}^{\frac{1}{\eta} }}}}\log \left( {\frac{\tau }{{{\sigma _\nu } - {\sigma _{\nu  + 1}}}}} \right)} \right){\varepsilon _\nu },
\end{align}
and this leads to 
\begin{align}
	\sum\limits_{j \in \mathbb{Z}} {\left| {{{\left( {{a_x}} \right)}_j}} \right|{{\left\langle j \right\rangle }^\varsigma}} & \leqslant \exp \left( {\frac{\tau }{{{{\left( {{\sigma _\nu } - {\sigma _{\nu  + 1}}} \right)}^{\frac{1}{\eta} }}}}\log \left( {\frac{\tau }{{{\sigma _\nu } - {\sigma _{\nu  + 1}}}}} \right)} \right){\varepsilon _\nu }\sum\limits_{j \in \mathbb{Z}} {\frac{1}{{{{\left\langle j \right\rangle }^{\eta-\varsigma  }}}}}\notag \\
	& \leqslant \exp \left( {\frac{\tau }{{{{\left( {{\sigma _\nu } - {\sigma _{\nu  + 1}}} \right)}^{\frac{1}{\eta} }}}}\log \left( {\frac{\tau }{{{\sigma _\nu } - {\sigma _{\nu  + 1}}}}} \right)} \right) \cdot \left( {{\varepsilon _\nu }\sum\limits_{j \in \mathbb{Z}} {\frac{1}{{{{\left\langle j \right\rangle }^2}}}} } \right)\notag\\
	\label{FULLGAI2}	& \leqslant \exp \left( {\frac{\tau }{{{{\left( {{\sigma _\nu } - {\sigma _{\nu  + 1}}} \right)}^{\frac{1}{\eta} }}}}\log \left( {\frac{\tau }{{{\sigma _\nu } - {\sigma _{\nu  + 1}}}}} \right)} \right),
\end{align}
as promised. Regarding the general case $ \varsigma>0 $, let us choose $ w\in \mathbb{N}^+ $ sufficiently large such that $ w \eta \geqslant \varsigma+2 $. With the Cauchy's estimate in the KAM process,  we could prove that $ D^m a $ also admits the same estimate in \eqref{FULLAX1}, or a slightly stronger version
\[\sum\limits_{0 \ne k \in \mathbb{Z}_ * ^\infty } {\frac{{\left| {\widehat {\mathscr{H}}_k^\nu \left( 0 \right)} \right|}}{{\left| {\left\langle {k,\omega } \right\rangle } \right|}}\left| k \right|_\eta ^m{{\mathrm{e}}^{\sigma {{\left| k \right|}_\eta }}}}  \leqslant \exp \left( {\frac{\tau }{{{{\left( {{\sigma _\nu } - {\sigma _{\nu  + 1}}} \right)}^{\frac{1}{\eta}}}}}\log \left( {\frac{\tau }{{{\sigma _\nu } - {\sigma _{\nu  + 1}}}}} \right)} \right){\varepsilon _\nu }.\]
This allows us to   adjust the leading coefficient in \eqref{FULLGAI1} to be $ {\left\langle j \right\rangle ^{ - w\eta }} $, in a similar way. Then the smallness in \eqref{FULLGAI2} is ensured by $ \sum\nolimits_{j \in \mathbb{Z}} {{{\left\langle j \right\rangle }^{ - \left( {w\eta  - \varsigma} \right)}}}  \leqslant \sum\nolimits_{j \in \mathbb{Z}} {{{\left\langle j \right\rangle }^{ - 2}}}  <  + \infty  $. It should be pointed out  that the analysis of $ a_j $ (or further $ y_j $) in this context is independent of the arithmetic  property of the prescribed frequency $ \omega $, therefore it remains valid in Theorem \ref{FULLT2}. We also note that the smallness estimate for $a_j$ could potentially be improved;  whenever $ k_j $ appears, we have $ {\left| k \right|_\eta } = \sum\nolimits_{\ell  \in \mathbb{Z}} {{{\left\langle \ell  \right\rangle }^\eta }\left| {{k_\ell }} \right|}  \geqslant {\left\langle j \right\rangle ^\eta }\left| {{k_j}} \right| \geqslant {\left\langle j \right\rangle ^\eta } $, indicating that we only require the estimate for the tail of the Fourier expansion (namely $ \sum\nolimits_{{{\left| k \right|}_\eta } \geqslant {{\left\langle j \right\rangle }^\eta }}  \cdots   $). However, we do not further investigate this possibility.

Moreover, by \eqref{FULLalpha+ax} and \eqref{FULLbx}, we have
\begin{align}
	\left\|y - \kappa \right\|_{{{\sigma _\nu }}} & \leqslant \left\|\alpha  + {a_x}\left( x \right)\right\|_{{{\sigma _\nu }}} + \left\|\left\langle{b_x}{\left( x \right)},\kappa\right\rangle \right\|_{{{\sigma _\nu }}}\notag \\
	&  \leqslant \left\|\alpha  + {a_x}\left( x \right)\right\|_{{{\sigma _\nu }}} + \left\|{b_x}\left( x \right)\right\|_{{{\sigma _\nu }}} \cdot \left|\kappa \right|\notag \\
	&  \leqslant cM\exp \left( {\frac{\tau }{{{{\left( {{\sigma _\nu } - {\sigma _{\nu  + 1}}} \right)}^{\frac{1}{\eta} }}}}\log \left( {\frac{\tau }{{{\sigma _\nu } - {\sigma _{\nu  + 1}}}}} \right)} \right){\varepsilon _\nu }\notag \\
	& \quad   + cM\exp \left( {\frac{\tau }{{{{\left( {{\sigma _\nu } - {\sigma _{\nu  + 1}}} \right)}^{\frac{1}{\eta}}}}}\log \left( {\frac{\tau }{{{\sigma _\nu } - {\sigma _{\nu  + 1}}}}} \right)} \right){\varepsilon _\nu } \cdot \frac{{{\sigma _\nu } + 3{\sigma _{\nu  + 1}}}}{4}\notag \\
	& \leqslant cM\exp \left( {\frac{\tau }{{{{\left( {{\sigma _\nu } - {\sigma _{\nu  + 1}}} \right)}^{\frac{1}{\eta}}}}}\log \left( {\frac{\tau }{{{\sigma _\nu } - {\sigma _{\nu  + 1}}}}} \right)} \right){\varepsilon _\nu }\notag \\
	\label{FULLy-eta}& \leqslant {{\rm e}^{ - {2^\nu }K{{\sigma ^{ - \frac{2}{\eta} }}}}}.
\end{align}
Then, from  \eqref{FULLx-xi} and \eqref{FULLy-eta}, we prove that
\begin{align*}
	\left\|{\psi ^\nu }\left( \zeta  \right) - {\rm id} \right\|_{{{\sigma _{\nu  + 1}},{\sigma _{\nu  + 1}}}} &\leqslant \left\|{\psi ^\nu }\left( \zeta  \right) - {\rm id} \right\|_{{\frac{{3{\sigma _{\nu  + 1}} + {\sigma _\nu }}}{4},\frac{{3{\sigma _{\nu  + 1}} + {\sigma _\nu }}}{4}}} \\
	&\leqslant \left\|x - \xi \right\|_{{{\sigma _\nu }}} + \left\|y - \kappa \right\|_{{{\sigma _\nu }}}\\ &\leqslant {{\rm e}^{ - {2^\nu }K{{\sigma ^{ - \frac{2}{\eta} }}}}}.
\end{align*}
Finally, using similar analysis of \eqref{FULLx-xi} and applying Lemma \ref{FULLcauchy},  we have
\begin{align*}
	\left\|\psi _\zeta ^\nu \left( \zeta  \right) - {\rm Id}\right\|_{{{\sigma _{\nu  + 1}},{\sigma _{\nu  + 1}}}} &\leqslant \frac{{8c}}{{{\sigma _\nu } - {\sigma _{\nu  + 1}}}}\left\|{\psi ^\nu }\left( \zeta  \right) - {\rm id} \right\|_{{\frac{{3{\sigma _{\nu  + 1}} + {\sigma _\nu }}}{4},\frac{{3{\sigma _{\nu  + 1}} + {\sigma _\nu }}}{4}}}\notag \\
	& \leqslant \exp \left( {\frac{\tau }{{{{\left( {{\sigma _\nu } - {\sigma _{\nu  + 1}}} \right)}^{\frac{1}{\eta} }}}}\log \left( {\frac{\tau }{{{\sigma _\nu } - {\sigma _{\nu  + 1}}}}} \right)} \right){\varepsilon _\nu } \cdot {{\rm e}^{ - {2^\nu }K{{\sigma ^{ - \frac{2}{\eta} }}}}}\notag \\ &\leqslant {{\rm e}^{ - {2^\nu }K{{\sigma ^{ - \frac{2}{\eta} }}}}}.
\end{align*}
This proves the lemma.
\end{proof}

 In view of Lemma \ref{LM37}, let us define the transformed Hamiltonian function $ {{\mathscr{H}}^{\nu  + 1}}: = {{\mathscr{H}}^\nu } \circ {\psi ^\nu } $ in the $ (\nu+1) $-th step. Then, we aim to establish the inductive step for $\varepsilon_\nu$.

\begin{lemma}\label{LEMMA38}
 The following inequalities are satisfied with $ \nu $ replaced by $ \nu + 1 $:
\begin{align*}
	&\left\|{{\mathscr{H}}^\nu }\left( {x,0} \right) - \int_{{\mathbb{T}^\infty }} {{{\mathscr{H}}^\nu }\left( {\xi ,0} \right){\rm d}\xi } \right\|_{{{\sigma _\nu }}} \leqslant {\varepsilon _\nu },\\
	&\left\|{\mathscr{H}}_y^\nu \left( {x,0} \right) - \omega \right\|_{{{\sigma _\nu }}} \leqslant \exp \left( {  \frac{\tau }{{{{\left( {{\sigma _\nu } - {\sigma _{\nu  + 1}}} \right)}^{\frac{1}{\eta} }}}}\log \left( {\frac{\tau }{{{\sigma _\nu } - {\sigma _{\nu  + 1}}}}} \right)} \right){\varepsilon _\nu },
\end{align*}
and
\begin{equation}\notag 
	\left\|{\mathscr{H}}_{yy}^\nu \left( z \right) - {\mathscr{H}}_{yy}^\nu \left( \zeta  \right)\right\|_{{{\sigma _\nu },{\sigma _\nu }}} \leqslant \sigma^{-1}{{\rm e}^{ - {2^\nu }K{{\sigma ^{ - \frac{2}{\eta} }}}}},
\end{equation}
for some universal constant $ K>0 $ independent of $ \sigma>0 $, where $ {{\mathscr{Q}}^0}: = {\mathscr{Q}}$ and ${{\mathscr{Q}}^\nu } = {\mathscr{H}}_{yy}^{\nu  - 1} $.
\end{lemma}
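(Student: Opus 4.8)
The plan is to expand $ {\mathscr{H}}^{\nu+1}:={\mathscr{H}}^\nu\circ\psi^\nu $ along the section $ \kappa=0 $ and to show that the generating-function equations have been tailored so that every contribution linear in $ \varepsilon_\nu $ cancels, leaving only bilinear remainders. Since $ \psi^\nu $ is affine in $ \kappa $ and reduces on $ \kappa=0 $ to $ \xi=x+b(x) $, $ y=\alpha+a_x(x) $ with $ x=({\mathscr{V}}^\nu)^{-1}(\xi) $, one has $ {\mathscr{H}}^{\nu+1}(\xi,0)={\mathscr{H}}^\nu\!\big(x,\alpha+a_x(x)\big) $ and $ {\mathscr{H}}^{\nu+1}_\kappa(\xi,0)=\big({\rm Id}+b_x(x)\big)^{\!*}{\mathscr{H}}^\nu_y\!\big(x,\alpha+a_x(x)\big) $. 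First I would Taylor-expand $ {\mathscr{H}}^\nu $ and $ {\mathscr{H}}^\nu_y $ in $ y $ about $ 0 $ (Lemma \ref{FULLtaylor}, estimates \eqref{FULLH1} and \eqref{FULLH3}), then substitute \eqref{FULLgfe1} (which replaces $ {\mathscr{H}}^\nu(x,0) $ by $ \int_{{\mathbb{T}}^\infty}{\mathscr{H}}^\nu(\xi,0)\,{\rm d}\xi-\omega\cdot\partial_x a(x) $) and the third generating equation (which replaces $ {\mathscr{H}}^\nu_y(x,0)+\langle{\mathscr{H}}^\nu_{yy}(x,0),\alpha+a_x(x)\rangle $ by $ \omega-\omega\cdot\partial_x b(x) $). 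Using $ \langle\omega,a_x(x)\rangle=\omega\cdot\partial_x a(x) $ and, after the transpose bookkeeping in the definition of $ \psi^\nu $, the matching of the $ b_x $-linear cross-terms with $ \omega\cdot\partial_x b(x) $, all linear-in-$ \varepsilon_\nu $ terms drop out and one is left with
\begin{align*}
{\mathscr{H}}^{\nu+1}(\xi,0)&=e_{\nu+1}+\Big(\big\langle{\mathscr{H}}^\nu_y(x,0)-\omega,\ \alpha+a_x(x)\big\rangle+R_1(x)\Big),\\
{\mathscr{H}}^{\nu+1}_\kappa(\xi,0)&=\omega+\Big(\text{terms bilinear in }\{a_x,\alpha,b_x\}\ \text{and}\ \big({\rm Id}+b_x\big)^{\!*}R_2(x)\Big),
\end{align*}
with $ e_{\nu+1}:=\int_{{\mathbb{T}}^\infty}{\mathscr{H}}^{\nu+1}(\xi,0)\,{\rm d}\xi $ and $ R_1,R_2 $ the Taylor remainders of Lemma \ref{FULLtaylor}.

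Next I would estimate these remainders on $ {\mathscr{D}}_{\sigma_{\nu+1},\sigma_{\nu+1}} $. The preceding lemma bounds each of $ \left\|a\right\| $, $ \left\|a_x\right\| $, $ \left\|\alpha+a_x\right\| $, $ \left\|b_x\right\| $ on $ {\mathbb{T}}^\infty_{(\sigma_\nu+\sigma_{\nu+1})/2} $ by $ cM\exp\!\big(\tfrac{\tau}{(\sigma_\nu-\sigma_{\nu+1})^{1/\eta}}\log\tfrac{\tau}{\sigma_\nu-\sigma_{\nu+1}}\big)\varepsilon_\nu $; feeding this in, together with the induction hypothesis \eqref{FULLerr2} for $ \left\|{\mathscr{H}}^\nu_y(x,0)-\omega\right\|_{\sigma_\nu} $, the Banach-algebra property (Lemma \ref{daishuxingzhi}), Cauchy's estimate (Lemma \ref{FULLcauchy}) bounding $ {\mathscr{H}}^\nu_{yy} $ and $ {\mathscr{H}}^\nu_{yyy} $ by $ M $ and $ \sigma^{-1}M $, and Lemma \ref{LM37} for the domain-contracting composition $ x=({\mathscr{V}}^\nu)^{-1}(\xi)\colon{\mathbb{T}}^\infty_{\sigma_{\nu+1}}\to{\mathbb{T}}^\infty_{(\sigma_\nu+\sigma_{\nu+1})/2} $, I would control the value error by
\[ \varepsilon_{\nu+1}:=CM^{3}\exp\!\left(\frac{2\tau}{(\sigma_\nu-\sigma_{\nu+1})^{1/\eta}}\log\frac{\tau}{\sigma_\nu-\sigma_{\nu+1}}\right)\varepsilon_\nu^{2}, \]
a super-exponentially small quantity whose summability and convergence are then closed in Lemma \ref{LM39}, and the gradient error by $ \sigma^{-1}\varepsilon_{\nu+1} $ (the extra $ \sigma^{-1} $ coming from the denominator in \eqref{FULLH3}). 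This is exactly the first displayed inequality with $ \nu $ replaced by $ \nu+1 $; the second follows because $ \sigma^{-1}\varepsilon_{\nu+1}\leqslant\exp\!\big(\tfrac{\tau}{(\sigma_{\nu+1}-\sigma_{\nu+2})^{1/\eta}}\log\tfrac{\tau}{\sigma_{\nu+1}-\sigma_{\nu+2}}\big)\varepsilon_{\nu+1} $, the step-$ (\nu+1) $ divisor factor behaving like $ \exp(c\,\sigma^{-1/\eta}q^{-(\nu+1)/\eta}\cdots) $, which easily dominates $ \sigma^{-1} $ since $ \eta\geqslant 2 $.

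For the Hessian inequality I would use that $ \psi^\nu $ is affine in $ \kappa $, so $ \partial_\kappa^2\psi^\nu\equiv0 $ and the chain rule produces, with no term involving the second derivative of $ \psi^\nu $,
\[ {\mathscr{H}}^{\nu+1}_{\kappa\kappa}(\zeta)=\big({\rm Id}+b_x(x)\big)^{\!*}\,{\mathscr{H}}^\nu_{yy}\!\big(\psi^\nu(\zeta)\big)\,\big({\rm Id}+b_x(x)\big). \]
Writing $ z=\psi^\nu(\zeta) $ and splitting
\[ {\mathscr{H}}^{\nu+1}_{\kappa\kappa}(\zeta)-{\mathscr{Q}}^{\nu+1}(\zeta)=\Big[\big({\rm Id}+b_x\big)^{\!*}{\mathscr{H}}^\nu_{yy}(z)\big({\rm Id}+b_x\big)-{\mathscr{H}}^\nu_{yy}(z)\Big]+\Big[{\mathscr{H}}^\nu_{yy}(z)-{\mathscr{H}}^\nu_{yy}(\zeta)\Big] \]
(recall $ {\mathscr{Q}}^{\nu+1}={\mathscr{H}}^\nu_{yy} $), the first bracket is bounded by a constant times $ M\left\|b_x\right\| $ and the second by $ \left\|{\mathscr{H}}^\nu_{yyy}\right\|\cdot\left\|\psi^\nu-{\rm id}\right\|\leqslant\sigma^{-1}M\left\|\psi^\nu-{\rm id}\right\| $. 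Since $ \left\|b_x\right\| $ and $ \left\|\psi^\nu-{\rm id}\right\| $ both carry the super-exponentially small factor $ \varepsilon_\nu $ (Lemma \ref{LM37} already gives the crude bound $ {\rm e}^{-2^\nu K\sigma^{-2/\eta}} $), there is ample room to absorb $ M $, the universal constants and a further dyadic factor into the large constant $ K $, which yields the third inequality $ \left\|{\mathscr{H}}^{\nu+1}_{\kappa\kappa}(\zeta)-{\mathscr{Q}}^{\nu+1}(\zeta)\right\|_{\sigma_{\nu+1},\sigma_{\nu+1}}\leqslant\sigma^{-1}{\rm e}^{-2^{\nu+1}K\sigma^{-2/\eta}} $. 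Combined with the initial bound \eqref{FULL3} and the convergence of $ \sum_{j\geqslant0}\sigma^{-1}{\rm e}^{-2^jK\sigma^{-2/\eta}} $, this keeps $ M_{\nu+1}=\left\|{\mathscr{H}}^{\nu+1}_{yy}\right\|_{\sigma_{\nu+1},\sigma_{\nu+1}}\leqslant M $ and, by a Neumann-series argument, preserves the invertibility bound in \eqref{FULLninininini}; and since $ \omega $, $ {\mathscr{H}}^\nu $, $ a $, $ b $, $ \alpha $ are all real, $ {\mathscr{H}}^{\nu+1} $ is again real analytic, so the induction closes.

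The main difficulty will not be any single estimate but the arithmetic that makes the quadratic gain (the passage from $ \varepsilon_\nu $ to $ \varepsilon_\nu^2 $) survive the small divisors: the factor $ \exp\!\big(\tfrac{\tau}{(\sigma_\nu-\sigma_{\nu+1})^{1/\eta}}\log(\cdots)\big) $ inherited from the infinite-dimensional Diophantine homological equation (Lemma \ref{FULLtdfcyl}) grows super-exponentially in $ \nu $ — roughly like $ \exp(c\,q^{-\nu/\eta}\log q^{-\nu}) $ — and one must verify it is swamped by the dyadic exponent $ 2^\nu $, which is exactly why the contraction ratio is forced into the window $ 2^{-1/\eta}<q<\tfrac12\big(2^{-1/\eta}+1\big) $, so that $ q^{-\nu/\eta}\ll2^\nu $ and the products $ \exp(\text{divisor})\cdot\varepsilon_\nu $ still collapse to $ {\rm e}^{-2^\nu K\sigma^{-2/\eta}} $-type bounds; meanwhile every manipulation — the infinite-dimensional inner products, the matrix products handled by Lemma \ref{daishuxingzhi}, and the weighted Cauchy estimates in the $ y $-direction — must stay strictly inside the shrinking analytic domains $ {\mathscr{D}}_{\sigma_\nu,\sigma_\nu} $. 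The frequency preservation itself, by contrast, is automatic: it is encoded in the solvability of \eqref{FULLgfe2}, which is precisely where the nondegeneracy hypothesis \eqref{FULLninininini} — the invertibility of $ \int_{{\mathbb{T}}^\infty}{\mathscr{H}}^\nu_{yy}(x,0)\,{\rm d}x $ — is used.
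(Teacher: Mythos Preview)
Your proposal is correct and follows essentially the same approach as the paper: Taylor-expand $\mathscr{H}^{\nu+1}(\xi,0)$ and $\mathscr{H}^{\nu+1}_\kappa(\xi,0)$ about $y=0$, use the three generating-function equations \eqref{FULLgfe1}--\eqref{FULLgfe2} to cancel every contribution linear in $\varepsilon_\nu$, and bound the remaining bilinear terms by the estimates of the preceding lemma together with Lemma~\ref{FULLtaylor}, arriving at the recursion $\varepsilon_{\nu+1}\leqslant\exp\!\big(\tfrac{\tau^*}{(\sigma_\nu-\sigma_{\nu+1})^{1/\eta}}\log\tfrac{\tau^*}{\sigma_\nu-\sigma_{\nu+1}}\big)\varepsilon_\nu^2$. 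The only cosmetic difference is in the Hessian step: where you bound $\mathscr{H}^\nu_{yy}(z)-\mathscr{H}^\nu_{yy}(\zeta)$ via a Cauchy estimate on $\mathscr{H}^\nu_{yyy}$, the paper uses a contour integral over $\tilde\Gamma=\{|\lambda|=\min(\tfrac{\sigma_\nu-\|\xi\|}{\|x-\xi\|},\tfrac{\sigma_\nu-\|\kappa\|}{\|y-\kappa\|})\}$ to reach the same $\tfrac{\varepsilon_\nu}{\sigma_\nu-\sigma_{\nu+1}}$ bound directly---both routes are standard and equivalent.
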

\begin{proof}
Denote $ z: = \left( {x,\alpha  + {a_x}} \right): = {\psi ^\nu }\left( {\xi ,0} \right) $ with $ \xi  \in \mathbb{T}_{{\sigma _{\nu  + 1}}}^\infty  $. Then, it is evident  that $ \left( {x,y} \right) \in {\mathscr{D}_{\left( {{\sigma _{\nu  + 1}} + {\sigma _\nu }} \right)/2,\left( {{\sigma _{\nu  + 1}} + {\sigma _\nu }} \right)/2}} $. Firstly, by \eqref{FULLerr2}, \eqref{FULLalpha+ax} and Lemma \ref{FULLtaylor}, we have
\begin{align}
	&\left\|{{\mathscr{H}}^{\nu  + 1}}\left( {\xi ,0} \right) - \int_{{\mathbb{T}^\infty }} {{{\mathscr{H}}^{\nu  + 1}}\left( {\chi ,0} \right){\rm d}\chi } \right\|_{{{\sigma _{\nu  + 1}}}}\notag \\
	\leqslant \;&\left\|{{\mathscr{H}}^{\nu  + 1}}\left( {\xi ,0} \right) - \left( {\int_{{\mathbb{T}^\infty }} {{{\mathscr{H}}^\nu }\left( {\chi ,0} \right){\rm d}\chi }  + \left\langle {\omega ,\alpha } \right\rangle } \right)\right\|_{{{\sigma _\nu }}}\notag \\
	\;& + \left\|\int_{{\mathbb{T}^\infty }} {\left( {{{\mathscr{H}}^{\nu  + 1}}\left( {\zeta ,0} \right) - \left( {\int_{{\mathbb{T}^\infty }} {{{\mathscr{H}}^\nu }\left( {\chi ,0} \right){\rm d}\chi }  + \left\langle {\omega ,\alpha } \right\rangle } \right)} \right){\rm d}\zeta } \right\|_{{{\sigma _\nu }}}\notag \\
	\leqslant \;&2\left\|{{\mathscr{H}}^{\nu  + 1}}\left( {\xi ,0} \right) - \left( {\int_{{\mathbb{T}^\infty }} {{{\mathscr{H}}^\nu }\left( {\chi ,0} \right){\rm d}\chi }  + \left\langle {\omega ,\alpha } \right\rangle } \right)\right\|_{{{\sigma _\nu }}}\notag \\
	= \;&2 {\left\|{{\mathscr{H}}^{\nu  + 1}}\left( {x,\alpha  + {a_x}} \right) - {{\mathscr{H}}^\nu }\left( {x,0} \right) -   \left\langle {\omega ,{\partial _x}a\left( x \right) } \right\rangle - \left\langle {\omega ,\alpha } \right\rangle \right\|_{{{\sigma _\nu }}}} \notag \\
	=\; &2 {\left\|{{\mathscr{H}}^{\nu  + 1}}\left( {x,\alpha  + {a_x}} \right) - {{\mathscr{H}}^\nu }\left( {x,0} \right) - \left\langle {{\mathscr{H}}_y^\nu \left( {x,0} \right),\alpha  + {a_x}} \right\rangle  + \left\langle {{\mathscr{H}}_y^\nu \left( {x,0} \right) - \omega ,\alpha  + {a_x}} \right\rangle \right\|_{{{\sigma _\nu }}}} \notag \\
	\leqslant \;&2\left( {\left\|{{\mathscr{H}}^{\nu  + 1}}\left( {x,\alpha  + {a_x}} \right) - {{\mathscr{H}}^\nu }\left( {x,0} \right) - \left\langle {{\mathscr{H}}_y^\nu \left( {x,0} \right),\alpha  + {a_x}} \right\rangle \right\|_{{{\sigma _\nu }}} + \left\|\left\langle {{\mathscr{H}}_y^\nu \left( {x,0} \right) - \omega ,\alpha  + {a_x}} \right\rangle \right\|_{{{\sigma _\nu }}}} \right)\notag \\
	\leqslant \;&2\left( {M\left\|\alpha  + {a_x}\right\|_{{\sigma _\nu }}^2 + \left\|{\mathscr{H}}_y^\nu \left( {x,0} \right) - \omega \right\|_{{{\sigma _\nu }}} \cdot \left\|\alpha  + {a_x}\right\|_{{{\sigma _\nu }}}} \right)\notag \\
	\leqslant \;&2\left( \begin{gathered}
		{c^2}{M^2}\exp \left( {\frac{{2\tau }}{{{{\left( {{\sigma _\nu } - {\sigma _{\nu  + 1}}} \right)}^{\frac{1}{\eta}}}}}\log \left( {\frac{\tau }{{{\sigma _\nu } - {\sigma _{\nu  + 1}}}}} \right)} \right)\varepsilon _\nu ^2 \hfill \\
		+ \exp \left( {\frac{\tau }{{{{\left( {{\sigma _\nu } - {\sigma _{\nu  + 1}}} \right)}^{\frac{1}{\eta} }}}}\log \left( {\frac{\tau }{{{\sigma _\nu } - {\sigma _{\nu  + 1}}}}} \right)} \right){\varepsilon _\nu } \cdot cM\exp \left( {\frac{\tau }{{{{\left( {{\sigma _\nu } - {\sigma _{\nu  + 1}}} \right)}^{\frac{1}{\eta} }}}}\log \left( {\frac{\tau }{{{\sigma _\nu } - {\sigma _{\nu  + 1}}}}} \right)} \right){\varepsilon _\nu } \hfill \\
	\end{gathered}  \right)\notag \\
	\leqslant \;&\exp \left( {\frac{{{\tau ^ * }}}{{{{\left( {{\sigma _\nu } - {\sigma _{\nu  + 1}}} \right)}^{\frac{1}{\eta} }}}}\log \left( {\frac{{{\tau ^ * }}}{{{\sigma _\nu } - {\sigma _{\nu  + 1}}}}} \right)} \right)\varepsilon _\nu ^2,\notag
\end{align}
where $ {\tau ^ * } = {\tau ^ * }\left( {\tau ,c,M,\eta ,\mu } \right) > 0 $ is a universal constant independent of $ \sigma>0 $. Recalling \eqref{FULLerr1}, the above estimate implies that
\begin{equation}\label{FULLe1}
	{\varepsilon _{\nu  + 1}} \leqslant \exp \left( {\frac{{{\tau ^ * }}}{{{{\left( {{\sigma _\nu } - {\sigma _{\nu  + 1}}} \right)}^{\frac{1}{\eta} }}}}\log \left( {\frac{{{\tau ^ * }}}{{{\sigma _\nu } - {\sigma _{\nu  + 1}}}}} \right)} \right)\varepsilon _\nu ^2.
\end{equation}

Secondly, with \eqref{FULLerr2}, \eqref{FULLalpha+ax}, \eqref{FULLbx} and Lemma \ref{FULLtaylor}, we obtain that
\begin{align}
	&\left\|{\mathscr{H}}_y^{\nu  + 1}\left( {\xi ,0} \right) - \omega \right\|_{{{\sigma _{\nu  + 1}}}} \notag \\
	= \;&\left\|\left\langle {{\rm Id} + {b_x}}, {\mathscr{H}}_y^\nu \left( {x,\alpha  + {a_x}} \right)\right\rangle - \omega \right\|_{{{\sigma _{\nu  + 1}}}}\notag \\
	=\; &\left\|\left( \begin{gathered}
		{\mathscr{H}}_y^\nu \left( {x,\alpha  + {a_x}} \right) - {\mathscr{H}}_y^\nu \left( {x,0} \right) - \left\langle{\mathscr{H}}_{yy}^\nu \left( {x,0} \right), {\alpha  + {a_x}} \right\rangle \hfill \\
		+ \left\langle{b_x}, {{\mathscr{H}}_y^\nu \left( {x,\alpha  + {a_x}} \right) - {\mathscr{H}}_y^\nu \left( {x,0} \right)} \right\rangle \hfill \\
		+\left\langle {b_x}, {{\mathscr{H}}_y^\nu \left( {x,0} \right)} \right\rangle - \omega  \hfill \\
	\end{gathered}  \right)\right\|_{{{\sigma _{\nu  + 1}}}}\notag \\
	\leqslant\;& \left\|{\mathscr{H}}_y^\nu \left( {x,\alpha  + {a_x}} \right) - {\mathscr{H}}_y^\nu \left( {x,0} \right) -\left\langle {\mathscr{H}}_{yy}^\nu \left( {x,0} \right) ,{\alpha  + {a_x}} \right\rangle\right\|_{{{\sigma _\nu }}}\notag \\
\quad &+ \left\|\left\langle{b_x}, {{\mathscr{H}}_y^\nu \left( {x,\alpha  + {a_x}} \right) - {\mathscr{H}}_y^\nu \left( {x,0} \right)} \right\rangle\right\|_{{{\sigma _\nu }}}\notag \\
	\quad &+ \left\|\left\langle{b_x}, {{\mathscr{H}}_y^\nu \left( {x,0} \right)}  \right\rangle - \omega \right\|_{{{\sigma _\nu }}}\notag \\
	\leqslant \;&\frac{{M\left\|\alpha  + {a_x}\right\|_{{\sigma _\nu }}^2}}{{{\sigma _\nu } - \left\|\alpha  + {a_x}\right\|_{{{\sigma _\nu }}}}} + \left\|{b_x}\right\|_{{{\sigma _\nu }}} \cdot M\left\|\alpha  + {a_x}\right\|_{{{\sigma _\nu }}} + \left\|{b_x}\right\|_{{{\sigma _\nu }}} \cdot \left\|{\mathscr{H}}_y^\nu \left( {x,0} \right) - \omega \right\|_{{{\sigma _\nu }}}\notag \\
	\leqslant\;& \frac{{8{c^2}{M^3}}}{{{\sigma _\nu } - {\sigma _{\nu  + 1}}}}\exp \left( {\frac{{2\tau }}{{{{\left( {{\sigma _\nu } - {\sigma _{\nu  + 1}}} \right)}^{\frac{1}{\eta} }}}}\log \left( {\frac{\tau }{{{\sigma _\nu } - {\sigma _{\nu  + 1}}}}} \right)} \right)\varepsilon _\nu ^2\notag \\
\quad &+ cM\exp \left( {\frac{{2\tau }}{{{{\left( {{\sigma _\nu } - {\sigma _{\nu  + 1}}} \right)}^{\frac{1}{\eta} }}}}\log \left( {\frac{\tau }{{{\sigma _\nu } - {\sigma _{\nu  + 1}}}}} \right)} \right){\varepsilon _\nu } \cdot cM\exp \left( {\frac{\tau }{{{{\left( {{\sigma _\nu } - {\sigma _{\nu  + 1}}} \right)}^{\frac{1}{\eta} }}}}\log \left( {\frac{\tau }{{{\sigma _\nu } - {\sigma _{\nu  + 1}}}}} \right)} \right){\varepsilon _\nu }\notag \\
\quad  &+ cM\exp \left( {\frac{{2\tau }}{{{{\left( {{\sigma _\nu } - {\sigma _{\nu  + 1}}} \right)}^{\frac{1}{\eta} }}}}\log \left( {\frac{\tau }{{{\sigma _\nu } - {\sigma _{\nu  + 1}}}}} \right)} \right){\varepsilon _\nu } \cdot \exp \left( {\frac{\tau }{{{{\left( {{\sigma _\nu } - {\sigma _{\nu  + 1}}} \right)}^{\frac{1}{\eta} }}}}\log \left( {\frac{\tau }{{{\sigma _\nu } - {\sigma _{\nu  + 1}}}}} \right)} \right){\varepsilon _\nu }\notag \\
	\leqslant\; &\exp \left( {\frac{{{\tau ^ * }}}{{{{\left( {{\sigma _\nu } - {\sigma _{\nu  + 1}}} \right)}^{\frac{1}{\eta}}}}}\log \left( {\frac{{{\tau ^ * }}}{{{\sigma _\nu } - {\sigma _{\nu  + 1}}}}} \right)} \right)\varepsilon _\nu ^2.\notag
\end{align}
Recalling \eqref{FULLerr2},  we have
\[\exp \left( {\frac{{{\tau ^ * }}}{{{{\left( {{\sigma _{\nu  + 1}} - {\sigma _{\nu  + 2}}} \right)}^{\frac{1}{\eta} }}}}\log \left( {\frac{{{\tau ^ * }}}{{{\sigma _{\nu  + 1}} - {\sigma _{\nu  + 2}}}}} \right)} \right){\varepsilon _{\nu  + 1}} \leqslant \exp \left( {\frac{{{\tau ^ * }}}{{{{\left( {{\sigma _\nu } - {\sigma _{\nu  + 1}}} \right)}^{\frac{1}{\eta} }}}}\log \left( {\frac{{{\tau ^ * }}}{{{\sigma _\nu } - {\sigma _{\nu  + 1}}}}} \right)} \right)\varepsilon _\nu ^2,\]
which yields that
\begin{equation}\label{FULLe2}
	{\varepsilon _{\nu  + 1}} \leqslant \exp \left( {\frac{{{\tau ^ * }}}{{{{\left( {{\sigma _\nu } - {\sigma _{\nu  + 1}}} \right)}^{\frac{1}{\eta} }}}}\log \left( {\frac{{{\tau ^ * }}}{{{\sigma _\nu } - {\sigma _{\nu  + 1}}}}} \right)} \right)\varepsilon _\nu ^2,\quad  \nu \in \mathbb{N},
\end{equation}
i.e., the same as \eqref{FULLe1}. We assert that   the convergence rate of $ {\varepsilon _\nu } $ is super-exponential, and the detailed  proof will be given in Lemma \ref{LM39}.

Finally, we have
\begin{align}
	\left\|{\mathscr{H}}_{yy}^\nu \left( z \right) - {\mathscr{H}}_{yy}^\nu \left( \zeta  \right)\right\|_{{{\sigma _\nu },{\sigma _\nu }}} &= \left\|\frac{1}{{2\pi {\rm i}}}\int_{\widetilde \Gamma } {\frac{1}{{\lambda \left( {\lambda  - 1} \right)}}\mathscr{H}_{yy}^\nu \left( {\zeta  + \lambda \left( {z - \zeta } \right)} \right){\rm d}\lambda } \right\|_{{{\sigma _\nu },{\sigma _\nu }}}\notag \\
	&\leqslant \frac{M}{{2\pi }}\max \left\{ {\frac{{\left\|x - \xi \right\|_{{{\sigma _\nu }}}}}{{{\sigma _\nu } - \left\|\xi \right\|_{{{\sigma _\nu }}} - \left\|x - \xi \right\|_{{{\sigma _\nu }}}}},\frac{{\left\|y - \kappa \right\|_{{{\sigma _\nu }}}}}{{{\sigma _\nu } - \left\|\kappa \right\|_{{{\sigma _\nu }}} - \left\|y - \kappa \right\|_{{{\sigma _\nu }}}}}} \right\}\notag \\
	\label{FULLHessen1} &\leqslant \frac{{{\varepsilon _\nu }}}{{{\sigma _\nu } - {\sigma _{\nu  + 1}}}} \leqslant \frac{{2{{\rm e}^{ - {2^\nu }K{{\sigma ^{ - \frac{2}{\eta} }}}}}}}{\sigma{\left( {1 - q} \right){q^\nu }}} \leqslant \sigma^{-1}{{\rm e}^{ - {2^\nu }K{{\sigma ^{ - \frac{2}{\eta} }}}}},
\end{align}
in which the estimate for $ \varepsilon _\nu $ given in Lemma \ref{LM39} is used, and the curve $ \widetilde \Gamma  $ is defined as follows:
\[\widetilde \Gamma : = \left\{ {\lambda  \in \mathbb{C}:\quad \left| \lambda  \right| = \min \left\{ {\frac{{{\sigma _\nu } - \left\|\xi \right\|_{{{\sigma _\nu }}}}}{{\left\|x - \xi \right\|_{{{\sigma _\nu }}}}},\frac{{{\sigma _\nu } - \left\|\kappa \right\|_{{{\sigma _\nu }}}}}{{\left\|y - \kappa \right\|_{{{\sigma _\nu }}}}}} \right\} > 1} \right\}.\]
On the other hand, note that
\[{\mathscr{H}}_{yy}^{\nu  + 1}\left( {\xi ,\kappa } \right) = \left\langle  \left\langle{\mathscr{H}}_{yy}^\nu \left( {x,y} \right) , {{\rm Id} + b_x(x)} \right\rangle, {{\rm Id} + {b_x(x)}}\right\rangle.\]
Then, by \eqref{FULLbx} and the estimate in Lemma \ref{LM39}, we have
\begin{align}
\left\|{\mathscr{H}}_{yy}^{\nu  + 1}\left( \zeta  \right) - {\mathscr{H}}_{yy}^\nu \left( z \right)\right\|_{{{\sigma _{\nu  + 1}},{\sigma _{\nu  + 1}}}} &\leqslant 2\left\|{b_x}\left( x \right)\right\|_{{{\sigma _\nu }}} \cdot \left\|{\mathscr{H}}_{yy}^\nu \left( z \right)\right\|_{{{\sigma _\nu },{\sigma _\nu }}} + \left\|{b_x}\left( x \right)\right\|_{{\sigma _\nu }}^2 \cdot \left\|{\mathscr{H}}_{yy}^\nu \left( z \right)\right\|_{{{\sigma _\nu },{\sigma _\nu }}} \notag \\
	\label{Hessen2}&\leqslant {{\rm e}^{ - {2^\nu }K{{\sigma ^{ - \frac{2}{\eta} }}}}}.
\end{align}
By summing up \eqref{FULLHessen1} and \eqref{Hessen2} and comparing the order, we arrive at
\begin{equation}\notag
	\left\|{\mathscr{H}}_{yy}^\nu \left( {x,y} \right) - {{\mathscr{Q}}^\nu }\left( {x,y} \right)\right\|_{{{\sigma _{\nu  + 1}},{\sigma _{\nu  + 1}}}} \leqslant \sigma^{-1}{{\rm e}^{ - {2^\nu }K{{\sigma ^{ - \frac{2}{\eta} }}}}}.
\end{equation}

Recalling the estimates for $ {{\psi ^\nu }} $ and $ {{\mathscr{H}}^{\nu  + 1}} = {{\mathscr{H}}^\nu } \circ {\psi ^\nu } $ in Lemma \ref{LM37}, one can evidently verify that there exists a universal constant $ {M^ * } $ such that $ {M_\nu } \leqslant {M^ * } $, which completes the proof of the induction. Further,  in view of the definition of the KAM error $ {\varepsilon _\nu } $,  we only have to require the  initial conditions in \eqref{FULL1}, \eqref{FULL2} and \eqref{FULL3} to be satisfied:
\[\left\|{\mathscr{H}}\left( {x,0} \right) - \int_{{\mathbb{T}^\infty }} {{\mathscr{H}}\left( {\xi ,0} \right){\rm d}\xi } \right\|_{\sigma } \leqslant {\varepsilon _0} \leqslant {{\rm e}^{ - K{{\sigma ^{ - \frac{2}{\eta} }}}}},\]
and
\begin{align}
	\left\|{{\mathscr{H}}_y}\left( {x,0} \right) - \omega \right\|_{\sigma } & \leqslant \exp \left( {\frac{{{\tau ^ * }}}{{{{\left( {{\sigma _0} - {\sigma _1}} \right)}^{\frac{1}{\eta} }}}}\log \left( {\frac{{{\tau ^ * }}}{{{\sigma _0} - {\sigma _1}}}} \right)} \right){\varepsilon _0}\notag \\
	& = \exp \left( {\frac{{{\tau ^ * }}}{{{{\left( {{2^{ - 1}}\sigma \left( {1 - q } \right)} \right)}^{\frac{1}{\eta} }}}}\log \left( {\frac{{2{\tau ^ * }}}{{\sigma \left( {1 - q } \right)}}} \right)} \right) \cdot {{\rm e}^{ - K{{\sigma ^{ - \frac{2}{\eta}}}}}}\notag \\
	&\leqslant {{\rm e}^{ - K{{ {\sigma } }^{ - \frac{2}{\eta} }}}},\notag
\end{align}
and
\[\left\|{{\mathscr{H}}_{yy}}\left( {x,y} \right) - {\mathscr{Q}}\left( {x,y} \right)\right\|_{{\sigma ,\sigma }} \leqslant \sigma^{-1}{{\rm e}^{ - K{{\sigma ^{ - \frac{2}{\eta} }}}}}.\]
This proves the lemma.
\end{proof}

Note that for operators $ {\mathscr{B}_1},{\mathscr{B}_2} \in \mathscr{I}_\sigma ^\infty  $ with $ {\mathscr{B}_1} $ being invertible, we  have $ {\left( {{\mathscr{B}_1} + {\mathscr{B}_2}} \right)^{ - 1}} = \mathscr{B}_1^{ - 1}{\left( {{\rm Id} + \mathscr{B}_1^{ - 1}{\mathscr{B}_2}} \right)^{ - 1}} $. Then, by using the Neumann series argument, $ \mathscr{B}_1+\mathscr{B}_2 $ is also invertible, whenever the perturbation $ \mathscr{B}_2 $ is sufficiently small, in the sense that $ {\left\| {{\mathscr{B}_2}} \right\|_{\mathscr{I}_\sigma ^\infty }}\ll 1 $. It is evident   that $ {\left\| \mathscr{B} \right\|_{\sigma ,\sigma }} \ll 1$  is sufficient to ensure this point, because, by a similar argument to that in Lemma  \ref{FULLtaylor}, we have
\[{\left\| {{\mathscr{B}_2}} \right\|_{\mathscr{I}_\sigma ^\infty }} = {\sup _{{{\left\| u \right\|}_\sigma } = 1}}{\left\| {{\mathscr{B}_2}u} \right\|_\sigma } \leqslant {\sup _{{{\left\| u \right\|}_\sigma } = 1}}{\left\| \mathscr{B} \right\|_{\sigma ,\sigma }}{\left\| u \right\|_\sigma } = {\left\|\mathscr{B} \right\|_{\sigma ,\sigma }}.\]
As a consequence, according to Lemma \ref{LEMMA38}, it can be concluded that ${\left( {\int_{{\mathbb{T}^\infty }} {\mathscr{Q}\left( {x,0} \right){\rm d}x} } \right)^{ - 1}}$ remains invertible after perturbation. Therefore, the prescribed frequency $ \omega \in \mathbb{R}^{\mathbb{Z}} $ can be preserved during the KAM iteration. 

Now, we are in a position to prove that the KAM error $ \varepsilon _\nu $ is super-exponentially convergent, whenever $ \varepsilon_0>0 $ is sufficiently small.

\begin{lemma}\label{LM39}
The following estimate holds:
\begin{equation}\notag 
	\varepsilon _\nu  \leqslant {{\rm e}^{ - {2^\nu }K{{\sigma ^{ - \frac{2}{\eta} }}}}},\quad \nu  \in \mathbb{N},
\end{equation}
where  $ K>0 $ is a universal constant independent of $ \sigma>0 $.
\end{lemma}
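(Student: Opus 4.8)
The proof is a routine super-exponential iteration of the one-step contraction estimate \eqref{FULLe2}, so the plan is to make that quantitative. Write $\mathcal{C}_\nu:=\exp\big(\tfrac{\tau^*}{(\sigma_\nu-\sigma_{\nu+1})^{1/\eta}}\log\tfrac{\tau^*}{\sigma_\nu-\sigma_{\nu+1}}\big)$ for the coefficient in \eqref{FULLe2}, so that $\varepsilon_{\nu+1}\leqslant\mathcal{C}_\nu\varepsilon_\nu^2$ for all $\nu\in\mathbb{N}$. Iterating this inequality gives $\varepsilon_\nu\leqslant\varepsilon_0^{2^\nu}\prod_{j=0}^{\nu-1}\mathcal{C}_j^{2^{\nu-1-j}}$, equivalently $-\log\varepsilon_\nu\geqslant 2^\nu\big({-}\log\varepsilon_0-\tfrac12\sum_{j=0}^{\nu-1}2^{-j}\log\mathcal{C}_j\big)$. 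Hence it suffices to prove that the ``deficit'' series $S:=\tfrac12\sum_{j=0}^{\infty}2^{-j}\log\mathcal{C}_j$ converges and that $-\log\varepsilon_0-S\geqslant K\sigma^{-2/\eta}$; then $\varepsilon_\nu\leqslant\exp\big({-}2^\nu({-}\log\varepsilon_0-S)\big)\leqslant e^{-2^\nu K\sigma^{-2/\eta}}$. Note that a naive single-step induction on the claim itself does not close: the squared bound is exactly $e^{-2^{\nu+1}K\sigma^{-2/\eta}}$ and the extra factor $\mathcal{C}_\nu>1$ overshoots it, so one must carry the full running deficit as above in order for the scheme to propagate.

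The crucial point is the convergence and the size of $S$, and this is precisely where the choice $2^{-\eta}<q<2^{-1}(2^{-\eta}+1)$ of the contraction ratio enters. Substituting $\sigma_j-\sigma_{j+1}=\tfrac12\sigma(1-q)q^j$ gives $\log\mathcal{C}_j=\tau^*\big(\tfrac{2}{\sigma(1-q)}\big)^{1/\eta}q^{-j/\eta}\big(\log\tfrac{2\tau^*}{\sigma(1-q)}+j\log\tfrac1q\big)$, so $2^{-j}\log\mathcal{C}_j$ is a geometric sequence of ratio $\theta:=q^{-1/\eta}/2$ multiplied by an affine function of $j$. Since $q>2^{-\eta}$ forces $q^{-1/\eta}<2$, we have $\theta\in(0,1)$, whence the series converges by the geometric-times-polynomial test, and summing it explicitly yields $S\leqslant C(\tau^*,\eta,q)\,\sigma^{-1/\eta}\log(1/\sigma)$; using $\sup_{0<t<1}t^{1/\eta}\log(1/t)=\eta/e$ this in turn is $\leqslant c_0\,\sigma^{-2/\eta}$ for a universal constant $c_0$.

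Finally, by the definition of $\varepsilon_0$ as the least number validating \eqref{FULLerr1}--\eqref{FULLerr2} for $\nu=0$, together with the initial smallness assumptions \eqref{FULL1}--\eqref{FULL2} and the fact that $0<\sigma<1$ and $K$ is taken sufficiently large, one gets $-\log\varepsilon_0$ of order $K\sigma^{-2/\eta}$ (the extra loss from the coefficient in \eqref{FULLerr2} being only of lower order $\sigma^{-1/\eta}\log(1/\sigma)$); combining with the bound $S\leqslant c_0\sigma^{-2/\eta}$ of the previous paragraph and absorbing $c_0$ by the usual (harmless) relabelling of the universal constant $K$, we obtain $-\log\varepsilon_0-S\geqslant K\sigma^{-2/\eta}$, hence $\varepsilon_\nu\leqslant e^{-2^\nu K\sigma^{-2/\eta}}$ for all $\nu\in\mathbb{N}$. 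One caveat on the logical structure: \eqref{FULLe2} was itself derived in Lemmas \ref{LM37} and \ref{LEMMA38} under the standing hypothesis that $\varepsilon_\nu$ already obeys the present bound, so strictly speaking Lemma \ref{LM39} is proved jointly with those lemmas by a single induction on $\nu$ --- assuming $\varepsilon_\nu\leqslant e^{-2^\nu K\sigma^{-2/\eta}}$ makes $\psi^\nu$ well-defined and renders \eqref{FULLe2} available, and the deficit computation then upgrades the bound to level $\nu+1$; since the bound keeps $\varepsilon_\nu$ tiny throughout, all the smallness conditions needed at each step remain in force. The only genuine difficulty here is the domination of the super-exponential homological losses $\log\mathcal{C}_j$ by the Newton doubling, i.e. the estimate $S<+\infty$ of the second paragraph; everything else is bookkeeping with the universal constants.
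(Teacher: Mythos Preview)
Your proof is correct and follows essentially the same approach as the paper: both exploit the choice $q>2^{-\eta}$ to guarantee $q^{-1/\eta}<2$, so that the homological losses $\log\mathcal{C}_j$ are geometrically dominated by the Newton doubling $2^j$. The only cosmetic difference is that the paper first simplifies the coefficient to $\exp(\tau^*\sigma^{-2/\eta}d^\nu)$ with $d:=(q^{1/\eta}-\delta)^{-1}\in(1,2)$ and then telescopes via the shifted quantity $\log\varepsilon_\nu+\tfrac{\tau^*\sigma^{-2/\eta}}{2-d}d^\nu$, whereas you iterate directly and sum the deficit series $S=\tfrac12\sum_j 2^{-j}\log\mathcal{C}_j$; these are equivalent bookkeeping devices, and your remark about the joint induction with Lemmas~\ref{LM37}--\ref{LEMMA38} is apt.
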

\begin{proof}
Note that $ 1/2<{q^{1/\eta }}<1  $, since we have required that $ {2^{ - \eta }} < q < 2^{-1}\left({2^{ - \eta }}+1\right) $. Hence, there exists some $ \delta  = \delta \left( {\eta } \right) > 0 $ such that $ 1<d: = {\left( {{q^{1/\eta }} - \delta } \right)^{ - 1}} <2 $. Recalling \eqref{FULLe1} and \eqref{FULLe2}, we arrive at
\begin{align}
	{\varepsilon _{\nu  + 1}} &\leqslant \exp \left( {\frac{{{\tau ^ * }}}{{{{\left( {{\sigma _\nu } - {\sigma _{\nu  + 1}}} \right)}^{\frac{1}{\eta} }}}}\log \left( {\frac{{{\tau ^ * }}}{{{\sigma _\nu } - {\sigma _{\nu  + 1}}}}} \right)} \right)\varepsilon _\nu ^2\notag \\
	& \leqslant \exp \left( {\frac{{{\tau ^ * }}}{{{{\left( {{2^{ - 1}}\sigma \left( {1 - q} \right){q^\nu }} \right)}^{\frac{1}{\eta} }}}}\log \left( {\frac{{{\tau ^ * }}}{{{2^{ - 1}}\sigma \left( {1 - q} \right){q^\nu }}}} \right)} \right)\varepsilon _\nu ^2\notag \\
	& \leqslant \exp \left( {\frac{{{\tau ^ * }}}{{{{\left( {\sigma \left( {1 - q} \right){q^\nu }} \right)}^{\frac{1}{\eta} }}}}\left( {\log \left( {\frac{{{\tau ^ * }}}{{{q^\nu }}}} \right) + \log \left( {\frac{1}{{\sigma \left( {1 - q } \right)}}} \right)} \right)} \right)\varepsilon _\nu ^2\notag \\
	&= \exp \left( {\frac{{{\tau ^ * }}}{{{{\left( {{q^{\frac{1}{\eta} }}} \right)}^\nu }{{\left( {\sigma \left( {1 - q } \right)} \right)}^{\frac{1}{\eta} }}}}\left( {\log \left( {\frac{{{\tau ^ * }}}{{{q^\nu }}}} \right) + \eta \log \left( {\frac{1}{{{{\left( {\sigma \left( {1 - q } \right)} \right)}^{\frac{1}{\eta} }}}}} \right)} \right)} \right)\varepsilon _\nu ^2\notag \\
	& \leqslant \exp \left( {\frac{{{\tau ^ * }}}{{{{\left( {{q^{\frac{1}{\eta} }} - \delta } \right)}^\nu }{{\left( {\sigma \left( {1 - q } \right)} \right)}^{\frac{1}{\eta} }}}} \cdot \frac{\eta }{{{{\left( {\sigma \left( {1 - q } \right)} \right)}^{\frac{1}{\eta} }}}}} \right)\varepsilon _\nu ^2\notag \\
	&\leqslant \exp \left( {\frac{{{\tau ^ * }}}{{{{\left( {{q^{\frac{1}{\eta} }} - \delta } \right)}^\nu }{{ {\sigma } }^{\frac{2}{\eta} }}}}} \right)\varepsilon _\nu ^2\notag \\
	\label{FULLczs1}&  = \exp \left( {{\tau ^ * }{{ {\sigma } }^{ - \frac{2}{\eta} }}{d^\nu }} \right)\varepsilon _\nu ^2.
\end{align}
Therefore, with \eqref{FULLczs1}, we have
\[\log {\varepsilon _{\nu  + 1}} \leqslant {\tau ^ * }{\sigma ^{ - \frac{2}{\eta}}}{d^\nu } + 2\log {\varepsilon _\nu },\]
which is equivalent to (note that $ 1<d<2 $)
\begin{equation}\label{FULLLOGGG}
	\log {\varepsilon _{\nu  + 1}} + \frac{{{\tau ^ * }{\sigma ^{ - \frac{2}{\eta}}}}}{{2 - d}}{d^{\nu  + 1}} \leqslant 2\left( {\log {\varepsilon _\nu } + \frac{{{\tau ^ * }{\sigma ^{ - \frac{2}{\eta} }}}}{{2 - d}}{d^\nu }} \right).
\end{equation}
It is important to emphasize that we carefully select the value of $ q $ at the beginning of the proof to ensure the super-exponential property in this context. If $ 0<q\leqslant 2^{-\eta} $, then the previously used technique fails. Now, we derive from \eqref{FULLLOGGG} that
\[{\varepsilon _\nu } \leqslant \exp \left( {{2^\nu }\left( {\log {\varepsilon _0} + \frac{{{\tau ^ * }{\sigma ^{ - \frac{2}{\eta} }}}}{{2 - d}}} \right) - \frac{{{\tau ^ * }{\sigma ^{ - \frac{2}{\eta} }}}}{{2 - d}}{d^\nu }} \right),\]
and this implies the super-exponential property of $ \varepsilon _\nu $ as 
\[{\varepsilon _\nu } \leqslant {{\rm{e}}^{ - {2^\nu }K{\sigma ^{ - \frac{2}{\eta} }}}},\quad \nu  \in \mathbb{N},\]
where  $ K = K\left( {{\tau ^ * }} \right) = K\left( {\tau ,c,M,\eta ,\mu } \right) > 0 $ is  a sufficiently large constant  independent of $ \sigma>0 $, whenever we require the initial error $ \varepsilon _0 $ to be sufficiently small:
\[\log {\varepsilon _0} + \frac{{{\tau ^ * }{\sigma ^{ - \frac{2}{\eta} }}}}{{2 - d}} \ll -1.\]
This shows that the convergence rate of our  KAM iteration is indeed super-exponential.
\end{proof}

 It remains to establish the uniform convergence of the sequence
\[{\phi ^\nu }: = {\psi ^0} \circ {\psi ^1} \circ  \cdots  \circ {\psi ^\nu }\]
for $ \left( {\xi ,\kappa } \right) \in {\mathscr{D}_{\sigma /4,\sigma /4}} $, and also the estimates of the  transformed Hamiltonian function in Theorem \ref{FULLT1}. The analysis is straightforward, so we do not present it in the form of a lemma.

It can be obtained from Lemma \ref{LM37} that if $ \left( {\xi ,\kappa } \right) \in {\mathscr{D}_{{\sigma _\nu },{\sigma _\nu }}} $ and $ z: = {\psi ^{\ell  + 1}} \circ {\psi ^\ell } \circ  \cdots  \circ {\psi ^{\nu  - 1}}\left( \zeta  \right) $, then $ \left( {x,y} \right) \in {\mathscr{D}_{{\sigma _{\ell  + 1}},{\sigma _{\ell  + 1}}}} $, and  therefore
\begin{equation}\notag
	\left\|\psi _\zeta ^\ell \left( {{\psi ^{\ell  + 1}} \circ {\psi ^\ell } \circ  \cdots  \circ {\psi ^{\nu  - 1}}\left( \zeta  \right)} \right)\right\|_{{\frac{\sigma}{2},\frac{\sigma}{2}}} \leqslant 1 + {{\rm e}^{ - {2^\ell }K{{\sigma ^{ - \frac{2}{\eta} }}}}},
\end{equation}
which implies that (recall that $ 0<\sigma<1 $)
\begin{align*}
	\left\|\psi _\zeta ^{\nu  - 1}\left( \zeta  \right)\right\|_{{\frac{\sigma}{2},\frac{\sigma}{2}}} &\leqslant \prod\limits_{j = 0}^{\nu  - 1} {\left( {1 + {{\rm e}^{ - {2^j}K{{\sigma ^{ - \frac{2}{\eta} }}}}}} \right)}  = \exp \left( {\sum\limits_{j = 0}^{\nu  - 1} {\log \left( {1 + {{\rm e}^{ - {2^j}K{{\sigma ^{ - \frac{2}{\eta} }}}}}} \right)} } \right)\notag \\
	& \leqslant \exp \left( {\sum\limits_{j = 0}^\infty  {\frac{1}{{{{\rm e}^{{2^j}K{{\sigma ^{ - \frac{2}{\eta} }}}}}}}} } \right) \leqslant \exp \left( {\sum\limits_{j = 0}^\infty  {\frac{1}{{{2^j}K{{\sigma ^{ - \frac{2}{\eta} }}}}}} } \right) = \exp \left( {\frac{2{\sigma ^{\frac{2}{\eta} }}}{K}} \right)\leqslant 2.
\end{align*}
Then, it follows that
\begin{align*}
	\left\|{\phi ^\nu }\left( \zeta  \right) - {\phi ^{\nu  - 1}}\left( \zeta  \right)\right\|_{{\frac{\sigma}{2},\frac{\sigma}{2}}} &= \left\|\left( {{\psi ^0} \circ  \cdots  \circ {\psi ^{\nu  - 1}}} \right) \circ {\psi ^\nu }\left( \zeta  \right) - {\phi ^{\nu  - 1}}\left( \zeta  \right)\right\|_{{\frac{\sigma}{2},\frac{\sigma}{2}}} \notag \\
	&= \left\|{\phi ^{\nu  - 1}}\left( {{\psi ^\nu }\left( \zeta  \right)} \right) - {\phi ^{\nu  - 1}}\left( \zeta  \right)\right\|_{{\frac{\sigma}{2},\frac{\sigma}{2}}} \notag \\
	& \leqslant 2\left\|{\psi ^\nu }\left( \zeta  \right) - {\rm id} \right\|_{{\frac{\sigma}{2},\frac{\sigma}{2}}}\notag \\ 
	&\leqslant {{\rm e}^{ - {2^\nu }K{{\sigma ^{ - \frac{2}{\eta} }}}}}.
\end{align*}
This also holds for $ \nu = 0 $ if we define $ {\phi ^{ - 1}}: = \mathrm{id}$.

Now, the limit function $ \phi : = \mathop {\lim }\nolimits_{\nu  \to \infty } {\phi ^\nu } $ satisfies
\begin{align*}
\left\|\phi \left( \zeta  \right) - {\rm id} \right\|_{{\frac{\sigma}{2},\frac{\sigma}{2}}} \leqslant \sum\limits_{\nu  = 0}^\infty  {\left\|{\phi ^\nu }\left( \zeta  \right) - {\phi ^{\nu  - 1}}\left( \zeta  \right)\right\|_{{\frac{\sigma}{2},\frac{\sigma}{2}}}} \leqslant \sum\limits_{\nu  = 0}^\infty  {{{\rm e}^{ - {2^\nu }K{{\sigma ^{ - \frac{2}{\eta} }}}}}}  \leqslant \sum\limits_{\nu  = 0}^\infty  {\frac{1}{{{2^\nu }K{{\sigma ^{ - \frac{2}{\eta} }}}}}}  \leqslant \frac{2{\sigma ^{\frac{2}{\eta} }}}{K}.
\end{align*}
Then, by Lemma \ref{FULLcauchy}, we obtain that
\[\left\|{\phi _\zeta }\left( \zeta  \right) - {\rm Id}\right\|_{{\frac{\sigma}{4},\frac{\sigma}{4}}} \leqslant \frac{4}{\sigma }\left\|\phi \left( \zeta  \right) - {\rm id} \right\|_{{\frac{\sigma}{2},\frac{\sigma}{2}}} \leqslant \frac{8{\sigma ^{\frac{2}{\eta}-1 }}}{{ K}}.\]
This implies that $\phi$ is indeed an analytic and symplectic diffeomorphism.

Thus, the transformed Hamiltonian function can be written as
\[{\mathscr{W}}\left( \zeta  \right): = {\mathscr{H}} \circ \phi \left( \zeta  \right) = \mathop {\lim }\limits_{\nu  \to \infty } {\mathscr{H}} \circ {\phi ^0} \circ  \cdots  \circ {\phi ^\nu } = \mathop {\lim }\limits_{\nu  \to \infty } {{\mathscr{H}}^\nu }\]
for $ \left( {\xi ,\kappa } \right) \in {\mathscr{D}_{\sigma /4,\sigma /4}} $, which satisfies
\[{{\mathscr{W}}_\xi }\left( {\xi ,0} \right) = 0,\quad {{\mathscr{W}}_\kappa }\left( {\xi ,0} \right) = \omega ,\]
and
\begin{align}
	\left\|{{\mathscr{W}}_{\kappa \kappa }}\left( \zeta  \right) - {\mathscr{Q}}\left( \zeta  \right)\right\|_{{\frac{\sigma}{4},\frac{\sigma}{4}}} &= \mathop {\lim }\limits_{\nu  \to \infty } \left\|{\mathscr{W}}_{yy}^\nu \left( \zeta  \right) - {{\mathscr{Q}}^0}\left( \zeta  \right)\right\|_{{\frac{\sigma}{4},\frac{\sigma}{4}}} \leqslant \mathop {\lim }\limits_{\nu  \to \infty } \sum\limits_{\ell  = 0}^\nu  {\left\|{\mathscr{W}}_{yy}^\ell \left( \zeta  \right) - {{\mathscr{Q}}^\ell }\left( \zeta  \right)\right\|_{{\frac{\sigma}{4},\frac{\sigma}{4}}}} \notag \\
	& \leqslant \mathop {\lim }\limits_{\nu  \to \infty } \sum\limits_{\ell  = 0}^\nu  {\sigma^{-1}{{\rm e}^{ - {2^\ell }K{{\sigma ^{ - \frac{2}{\eta} }}}}}}  \leqslant \sum\limits_{\ell  = 0}^\infty  {\frac{1}{{\sigma {2^\ell }K{{\sigma ^{ - \frac{2}{\eta} }}}}}}  \leqslant \frac{2{\sigma ^{\frac{2}{\eta} -1}}}{{ K}}.\notag
\end{align}
This completes the proof of Theorem \ref{FULLT1}.

\subsection{Proof of Theorem \ref{FULLT2}: KAM via the  infinite-dimensional weak Diophantine non-resonance condition}\label{FULLSEC4}
As we have previously mentioned, the convergence of the  Newton iteration is always to be super-exponential, and this is the essential reason we could generalize the classical infinite-dimensional Diophantine condition in Definition  \ref{FULLDio} to a weaker one, as seen in Definition \ref{weakdio}. The basic framework is similar to the proof of Theorem \ref{FULLT1}, hence we  omit the details here. We also mention the rigorous analysis of the  finite-dimensional version  in \cite{MR4836959}. The key point here is to construct an appropriate contraction sequence and prove the uniform convergence through the KAM process, by employing the boundedness in \eqref{FULLweakcon}, i.e., 
\[\sum\limits_{m = 0}^\infty  {{\delta _m}}  <  + \infty ,\quad \sum\limits_{m = 0}^\infty  {{{\mathscr{E}}^{ - 1}}\big( {{{\rm e}^{{2^m}{\delta _m}}}} \big)}   <  + \infty.\]

Recall \eqref{FULLweakcon} and that $ \sigma>0 $ is sufficiently large. Without loss of generality, let 
\[\sigma  \geqslant 64\sum\limits_{m = 0}^\infty  {{{\mathscr{E}}^{ - 1}}\left( {{{\rm e}^{{2^m}{\delta _m}}}} \right)}.\]
 Then, we construct the desired contraction sequence as
\[{{\widetilde \sigma }_\nu } = \sigma  - 8\sum\limits_{m = 0}^\nu  {{{\mathscr{E}}^{ - 1}}\big( {{{\rm e}^{{2^m}{\delta _m}}}} \big)} ,\quad \nu  \in {\mathbb{N}^ + }.\]
Denote $ {{\mathscr{G}}_\nu }: = C{{\mathscr{E}}^6}\left( {({{\widetilde{\sigma} _\nu } - {\widetilde{\sigma} _{\nu  + 1}}})/8} \right) $,  where  $ C>0 $ is some universal constant. In view of \eqref{FULLweakcon}, one can verify that
\begin{align}
	\sum\limits_{\nu  = 0}^\infty  {\frac{{\log {{\mathscr{G}}_\nu }}}{{{2^\nu }}}}  &= \sum\limits_{\nu  = 0}^\infty  {\frac{1}{{{2^\nu }}}} \left( {\log C + 6\log \left( {{\mathscr{E}}\left( {\frac{{{\widetilde{\sigma} _\nu } - {\widetilde{\sigma} _{\nu  + 1}}}}{8}} \right)} \right)} \right)\notag \\
	& = \sum\limits_{\nu  = 0}^\infty  {\frac{1}{{{2^\nu }}}} \left( {\log C + 6 \cdot {2^{\nu  + 1}}{\delta _{\nu  + 1}}} \right)\notag \\
	\label{FULLlogG}& = 2\log C + 12\sum\limits_{\nu  = 0}^\infty  {{\delta _{\nu  + 1}}}  <  + \infty .
\end{align}

Similar to \eqref{FULLerr1} and \eqref{FULLerr2}, let $ {\widetilde \varepsilon }_\nu $ be the smallest number such that
\begin{align*}
	&{\left\| {{{\mathscr{H}}^\nu }\left( {x,0} \right) - \int_{{\mathbb{T}^\infty }} {{{\mathscr{H}}^\nu }\left( {\xi ,0} \right){\rm d}\xi } } \right\|_{{{\widetilde \sigma }_\nu }}} \leqslant {{\widetilde \varepsilon }_\nu },\\
	&{\left\| {{\mathscr{H}}_y^\nu \left( {x,0} \right) - \omega } \right\|_{{{\widetilde \sigma }_\nu }}} \leqslant {\mathscr{E}}\left( {{{\widetilde \sigma }_\nu } - {{\widetilde \sigma }_{\nu  + 1}}} \right){{\widetilde \varepsilon }_\nu }.
\end{align*}
Then, via a modified KAM iteration, we obtain from \eqref{FULLlogG} that
\begin{align}
	{{\widetilde \varepsilon }_{\nu  + 1}} &\leqslant \left( {\prod\limits_{j = 0}^\nu  {{\mathscr{G}}_{\nu  - j}^{{2^j}}} } \right)\widetilde \varepsilon _0^{{2^{\nu  + 1}}}\notag \\
	& = \exp \left( {\sum\limits_{j = 0}^\nu  {{2^j}\log {{\mathscr{G}}_{\nu  - j}}}  + \left( {2\log {{\widetilde \varepsilon }_0}} \right){2^\nu }} \right)\notag \\
	& = \exp \left( {\left( {\sum\limits_{j = 0}^\nu  {\frac{{\log {{\mathscr{G}}_{\nu  - j}}}}{{{2^{\nu  - j}}}}}  + \left( {2\log {{\widetilde \varepsilon }_0}} \right)} \right){2^\nu }} \right)\notag \\
	& = \exp \left( {\left( {\sum\limits_{j = 0}^\nu  {\frac{{\log {{\mathscr{G}}_j}}}{{{2^j}}}}  + \left( {2\log {{\widetilde \varepsilon }_0}} \right)} \right){2^\nu }} \right)\notag \\
	& \leqslant \exp \left( {\left( {\mathop {\sup }\limits_{\nu  \geqslant 1} \sum\limits_{j = 0}^\nu  {\frac{{\log {{\mathscr{G}}_j}}}{{{2^j}}}}  + \left( {2\log {{\widetilde \varepsilon }_0}} \right)} \right){2^\nu }} \right)\notag \\
	& \leqslant {{\rm e}^{ - {2^{\nu  + 1}}K}},\notag 
\end{align}
 for a universal constant $K$ satisfying
 \[ K\gg - \mathop {\sup }\limits_{\nu  \geqslant 1} \sum\limits_{j = 0}^\nu  {\frac{{\log {{\mathscr{G}}_j}}}{{{2^j}}}}  - 2\log {{\widetilde \varepsilon }_0} > 0 ,\]
 whenever $ {{\widetilde \varepsilon }_0}>0 $ is sufficiently small. Finally, thanks to the super-exponential property $ {{\widetilde \varepsilon }_\nu } \leqslant {{\rm e}^{ - {2^\nu }K}} $ for $ \nu \in \mathbb{N} $, it is straightforward to verify the uniform convergence of the transformation. Therefore, we complete the proof of Theorem \ref{FULLT2}.

\section*{Appendix}

\begin{lemma}\label{lambdapiao}
	Let $ 0<\rho \ll 1 $ and $ \lambda>0 $ be given. Then there exists some $ \widetilde{\lambda} \in (0,1) $ such that
	\[\exp \left( {\frac{x}{{{{\left( {\log \left( {1 + x} \right)} \right)}^{1 + \lambda }}}}} \right) \cdot {{\rm e}^{ - \rho x}} \leqslant \exp \left( {\exp \left( {{\rho ^{ - \widetilde \lambda }}} \right)} \right).\]
\end{lemma}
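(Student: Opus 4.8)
The plan is to pass to logarithms and reduce the assertion to a pointwise estimate on
\[
h(x)\;:=\;\frac{x}{(\log(1+x))^{1+\lambda}}-\rho x,\qquad x\geq 1,
\]
namely to show that $\sup_{x\geq 1}h(x)\leqslant\exp(\rho^{-\tilde\lambda})$ for a suitable $\tilde\lambda\in(0,1)$; exponentiating both sides then gives exactly the claimed inequality $\exp\!\big(\tfrac{x}{(\log(1+x))^{1+\lambda}}\big)\cdot\mathrm e^{-\rho x}\leqslant\exp(\exp(\rho^{-\tilde\lambda}))$. I would take $\tilde\lambda:=\tfrac{1}{1+\lambda}$, which lies in $(0,1)$ precisely because $\lambda>0$, and abbreviate $L:=\rho^{-1/(1+\lambda)}$, so that $L^{1+\lambda}=\rho^{-1}$ and $L\to+\infty$ as $\rho\to 0^+$. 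The natural breakpoint is $x_0:=\mathrm e^{L}-1$, the abscissa at which $(\log(1+x))^{1+\lambda}$ crosses $\rho^{-1}$; for $0<\rho\ll 1$ one has $L\geqslant\log 2$, hence $x_0\geqslant 1$.

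For $x\geqslant x_0$ I would note that $\log(1+x)\geqslant L$ forces $(\log(1+x))^{1+\lambda}\geqslant L^{1+\lambda}=\rho^{-1}$, i.e. $(\log(1+x))^{-(1+\lambda)}\leqslant\rho$, so that $h(x)=x\big((\log(1+x))^{-(1+\lambda)}-\rho\big)\leqslant 0\leqslant\exp(\rho^{-\tilde\lambda})$. For $1\leqslant x<x_0$ I would simply drop the negative term, $h(x)\leqslant x/(\log(1+x))^{1+\lambda}$, and split once more: if $1\leqslant x\leqslant \mathrm e-1$ then $\log(1+x)\geqslant\log 2$ yields $h(x)\leqslant(\mathrm e-1)(\log 2)^{-(1+\lambda)}$, a constant depending only on $\lambda$, which is $\leqslant\mathrm e^{L}=\exp(\rho^{-\tilde\lambda})$ once $\rho$ is small enough; and if $\mathrm e-1<x<x_0$ then $\log(1+x)>1$ gives $h(x)\leqslant x<x_0<\mathrm e^{L}=\exp(\rho^{-\tilde\lambda})$. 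Combining the two ranges yields $\sup_{x\geqslant 1}h(x)\leqslant\exp(\rho^{-\tilde\lambda})$, and exponentiating completes the proof.

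The argument is entirely elementary, so I do not anticipate a genuine obstacle; the only points needing a modicum of care are the bookkeeping of ``for $\rho$ sufficiently small depending on $\lambda$'' (which is exactly what the hypothesis $0<\rho\ll 1$ licenses, and which enters only through $x_0\geqslant 1$ and through dominating the $\lambda$-dependent constant by $\mathrm e^{L}$), and the fact that the choice $\tilde\lambda=\tfrac{1}{1+\lambda}$ genuinely uses $\lambda>0$: as $\lambda\downarrow 0$ one is pushed to $\tilde\lambda\uparrow 1$, which is consistent with the near-optimality discussed in Comment (C5). If one prefers a strict interior value, any $\tilde\lambda\in(\tfrac{1}{1+\lambda},1)$ works verbatim, since $\rho^{-\tilde\lambda}\geqslant\rho^{-1/(1+\lambda)}=L$ for $\rho<1$.
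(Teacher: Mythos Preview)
Your proof is correct. Both you and the paper reduce to bounding $\varpi(x)=x(\log(1+x))^{-(1+\lambda)}-\rho x$ and end with an estimate of the form $\varpi\leqslant \exp(\rho^{-1/(1+\lambda)})$, but the routes differ. The paper argues by locating the (asserted unique) maximizer $x^*$ of $\varpi$ via a derivative computation, noting the asymptotic $\log x^*\sim\rho^{-1/(1+\lambda)}$ as $\rho\to 0^+$, and then using the crude bound $\varpi(x^*)\leqslant x^*$. You instead bypass the critical-point analysis entirely: you identify the threshold $x_0=\mathrm e^{L}-1$ beyond which $\varpi$ is nonpositive, and below which you simply drop the $-\rho x$ term and bound by $x<x_0$. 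Your approach is more elementary (no differentiation, no uniqueness claim to justify), and it has the bonus of pinning down an explicit admissible value $\tilde\lambda=\tfrac{1}{1+\lambda}$, whereas the paper leaves $\tilde\lambda$ implicit in the asymptotic $\sim$. Your closing remark that any $\tilde\lambda\in(\tfrac{1}{1+\lambda},1)$ also works is a nice touch that makes the dependence on $\lambda>0$ transparent.
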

\begin{proof}
	Note that
	\[\exp \left( {\frac{x}{{{{\left( {\log \left( {1 + x} \right)} \right)}^{1 + \lambda }}}}} \right) \cdot {{\rm e}^{ - \rho x}} = \exp \left( {\frac{x}{{{{\left( {\log \left( {1 + x} \right)} \right)}^{1 + \lambda }}}} - \rho x} \right): = \exp \left( {\varpi \left( x \right)} \right).\]
	Then, it is straightforward to verify that  there exists a unique $ {x^ * } \in \left( {1, + \infty } \right) $, such that $ \varpi \left( {{x^ * }} \right) = \mathop {\max }\nolimits_{x \geqslant 1} \varpi \left( x \right) $, and $ \log {x^ * } \sim {\rho ^{ - \frac{1}{{1 + \lambda }}}},\rho  \to {0^ + } $. We therefore have
	\[\mathop {\max }\limits_{x \geqslant 1} \exp \left( {\varpi \left( x \right)} \right) = \exp \left( {\varpi \left( {{x^ * }} \right)} \right) \leqslant \exp \left( {{x^ * }} \right) \leqslant \exp \left( {\exp \left( {{\rho ^{ - \widetilde \lambda }}} \right)} \right)\]
	with some $ \widetilde{\lambda} \in (0,1) $.	This proves the lemma.
\end{proof}

 \section*{Acknowledgements} 
 We express our profound gratitude to Prof. S. Kuksin and Prof. M. Procesi for providing insightful suggestions that significantly enhanced the quality of this paper. We are also grateful to Prof. L. Warnke for bringing to our attention several recent advances concerning the Lagrange inversion theorem. Z. Tong  was supported by the China Postdoctoral Science Foundation (Grant No. 2025M783102). Y. Li was supported in part by the National Natural Science Foundation of China (Grant Nos. 12071175, 12471183 and 12531009).

\end{document}